\def\del{\delta}
\numberwithin{equation}{section}
\theoremstyle{plain}
\newtheorem{thm}{Theorem}[section]
\newtheorem{cor}[thm]{Corollary}
\newtheorem{lem}[thm]{Lemma}
\newtheorem{prop}[thm]{Proposition}
\newtheorem{defn}[thm]{Definition}
\newtheorem{exm}[thm]{Example}
\theoremstyle{remark}
\newtheorem{rem}[thm]{Remark}
\renewcommand{\mod}{\operatorname{mod}\nolimits}
\newcommand{\add}{\operatorname{add}\nolimits}
\newcommand{\ann}{\operatorname{ann}\nolimits}
\newcommand{\Fac}{\operatorname{Fac}\nolimits}
\newcommand{\Hom}{\operatorname{Hom}\nolimits}
\newcommand{\End}{\operatorname{End}\nolimits}
\renewcommand{\Im}{\operatorname{Im}\nolimits}
\newcommand{\Ext}{\operatorname{Ext}\nolimits}
\newcommand{\pd}{\operatorname{pd}\nolimits}
\newcommand{\id}{\operatorname{id}\nolimits}
\newcommand{\Cone}{\operatorname{Cone}\nolimits}
\newcommand{\CoCone}{\operatorname{CoCone}\nolimits}
\newcommand{\M}{\mathcal M}
\newcommand{\B}{\mathcal B}
\newcommand{\uB}{\underline{\B}}
\newcommand{\oB}{\overline{\B}}
\newcommand{\U}{\mathcal U}
\newcommand{\V}{\mathcal V}
\newcommand{\A}{\mathcal A}
\newcommand{\W}{\mathcal W}
\newcommand{\h}{\mathcal H}
\newcommand{\T}{\mathcal T}
\newcommand{\I}{\mathcal I}
\newcommand{\D}{\mathcal D}
\newcommand{\K}{\mathcal K}
\newcommand{\R}{\mathcal R}
\newcommand{\X}{\mathcal X}
\newcommand{\C}{\mathcal C}
\newcommand{\EE}{\mathbb E}
\newcommand{\svecv}[2]{\left(\begin{smallmatrix}
      #1 \\
      #2
    \end{smallmatrix}\right)}
\newcommand{\svech}[2]{\left(\begin{smallmatrix}
      #1 & #2
\end{smallmatrix}\right)}
\renewcommand{\emph}{\textit}
\renewcommand{\phi}{\varphi}
\begin{document}

\title{On the relation between relative rigid and support tilting}\footnote{Yu Liu was supported by the Fundamental Research Funds for the Central Universities (Grant No.  2682019CX51) and the National Natural Science Foundation of China (Grant No. 11901479). Panyue Zhou was supported by the National Natural Science Foundation of China (Grant Nos. 11901190 and 11671221), and by the Hunan Provincial Natural Science Foundation of China (Grant No. 2018JJ3205), and by the Scientific Research Fund of Hunan Provincial Education Department (Grant No. 19B239).}
\author{Yu Liu and Panyue Zhou}
\address{School of Mathematics, Southwest Jiaotong University, 610031, Chengdu, Sichuan, People's Republic of China}
\email{liuyu86@swjtu.edu.cn}
\address{College of Mathematics, Hunan Institute of Science and Technology, 414006, Yueyang, Hunan, People's Republic of China}
\email{panyuezhou@163.com}
\begin{abstract}
Let $\B$ be an extriangulated category with enough projectives and enough injectives. Let
 $\C$ be a fully rigid subcategory of $\B$ which admits a twin cotorsion pair $((\C,\K),(\K,\D))$.
The quotient category $\B/\K$ is abelian, we assume that it is hereditary and has finite length. In this article, we study the relation between support tilting subcategories of $\B/\K$ and maximal relative rigid subcategories of $\B$. More precisely, we show that the image of any cluster tilting subcategory of $\B$ is support tilting in $\B/\K$ and any support tilting subcategory in $\B/\K$ can be lifted to a unique relative maximal rigid subcategory in $\B$. We also give a bijection between these two classes of subcategories if $\C$ is generated by an object.
\end{abstract}
\keywords{extriangulated categories; cluster tilting; support tilting; fully rigid; maximal relative rigid.}
\subjclass[2010]{18E30; 18E10.}

\maketitle

\section{Introduction}

In mathematics, especially representation theory, classical tilting theory describes a way to relate the module categories of two algebras using so-called tilting modules and associated tilting functors. Classical tilting theory was motivated by the reflection functors introduced by Bernstein, Gelfand and  Ponomarev \cite{BGP}. These functors were reformulated by Auslander, Platzeck and Reiten \cite{APR}, and generalized by Brenner and Butler \cite{BB}.

Inspired by classical tilting theory, Buan, Marsh, Reineke, Reiten and Todorov \cite{BMRRT} introduced
cluster tilting objects in the context of cluster categories associated to a hereditary algebra, in order to categorify certain phenomena occurring in the theory
of cluster algebras developed by Fomin and Zelevinsky \cite{FZ}. Cluster categories have led to new developments in the theory of the canonical basis and its dual. They are providing insight into cluster algebras
and the related combinatorics, and have also been used to establish a new kind
of classical tilting theory, known as cluster tilting theory, which generalises APR-tilting for
hereditary algebras.

In the following result, which is a part of main theorem in \cite{IT}, Ingalls and Thomas introduced the concept of support tilting modules and established a relation between cluster tilting and classical tilting.

\begin{defn}
Let $\Lambda$ be a finite dimensional algebra. A module $T\in \mod \Lambda$ is called support tilting if $\Ext^1_{\Lambda}(T,T)=0$ and it is a tilting module for the algebra $\Lambda/\ann T$, where $\ann T$ is the annihilator of $T$.
\end{defn}

\begin{thm}{\rm (Ingalls and Thomas)}
Let $Q$ be a finite quiver without loops and cycles and  $\C$ be the cluster category of type $Q$
over an algebraically closed field $k$.
Then there is a bijection between the isomorphism classes of basic cluster tilting objects of $\C$ and the isomorphism classes of
basic support tilting modules in ${\rm mod}kQ$.
\end{thm}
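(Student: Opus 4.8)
The plan is to realise the bijection explicitly, reducing it to a statement about $kQ$-modules. Write $n=|Q_0|$ and recall from \cite{BMRRT} the structure of the cluster category $\C$: its indecomposable objects are the indecomposable $kQ$-modules together with the shifted projectives $P_i[1]$ ($i\in Q_0$); it is $2$-Calabi--Yau, so $\Ext^1_\C(X,Y)\cong D\Ext^1_\C(Y,X)$ with $D=\Hom_k(-,k)$; for $kQ$-modules $M,N$ one has $\Ext^1_\C(M,N)\cong\Ext^1_{kQ}(M,N)\oplus D\Ext^1_{kQ}(N,M)$; moreover $\Ext^1_\C(P_i[1],P_j[1])=0$ and $\Ext^1_\C(P_i[1],M)\cong\Hom_{kQ}(P_i,M)$; and a basic rigid object is cluster tilting iff it has exactly $n$ indecomposable summands. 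Any basic rigid $T$ decomposes uniquely as $T=M\oplus\bigl(\bigoplus_{i\in S}P_i[1]\bigr)$ with $M$ a $kQ$-module having no nonzero shifted-projective summand and $S\subseteq Q_0$; by the Ext-formulae, $T$ is rigid iff $\Ext^1_{kQ}(M,M)=0$ and $\Hom_{kQ}(P_i,M)=0$ for each $i\in S$, and since $\dim_k\Hom_{kQ}(P_i,M)$ equals the Jordan--H\"older multiplicity of $S_i$ in $M$, the latter condition says precisely $\operatorname{supp}(M)\cap S=\varnothing$. I would send such a $T$ to its module-part $M$, and conversely a basic support tilting module $M$ to $M\oplus\bigl(\bigoplus_{i\notin\operatorname{supp}(M)}P_i[1]\bigr)$, and then prove these assignments are mutually inverse.

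The module-theoretic core is the claim: for a basic $kQ$-module $M$ with $V:=\operatorname{supp}(M)$, the module $M$ is support tilting iff it is rigid over $kQ$ and has exactly $|V|$ indecomposable summands. The key point is that $M$ is canonically a module over the path algebra $H:=k(Q|_V)$ of the full subquiver on $V$ (every arrow not contained in $V$, and every path through a vertex outside $V$, annihilates $M$), that $H$ is hereditary with $|V|$ simple modules, that the indecomposable decomposition of $M$ over $H$ agrees with the one over $kQ$, and that $\Ext^1_H(M,M)\hookrightarrow\Ext^1_{kQ}(M,M)$, since an extension of $H$-modules that splits over $kQ$ already splits over $H$. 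Thus, if $M$ is rigid over $kQ$ with $|V|$ summands, then $M$ is a rigid $H$-module with $|V|$ summands, hence---by Bongartz completion and the fact that a tilting module over a hereditary algebra with $|V|$ simples has exactly $|V|$ summands---a tilting $H$-module; being tilting, $M$ is a faithful $H$-module, so $\ann_{kQ}(M)$ is exactly the kernel of $kQ\to H$, whence $kQ/\ann(M)\cong H$ and $M$ is support tilting. Conversely, if $M$ is support tilting it is a tilting module over $B:=kQ/\ann(M)$, hence has as many indecomposable summands as $B$ has simple modules, and this number is $|\{i\in Q_0:e_iM\neq 0\}|=|V|$.

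Granting this, the two directions are short. If $T=M\oplus\bigl(\bigoplus_{i\in S}P_i[1]\bigr)$ is cluster tilting, then $M$ is rigid over $kQ$, $\operatorname{supp}(M)\subseteq Q_0\setminus S$, and $M$ has $n-|S|$ indecomposable summands; but a rigid $k(Q|_{\operatorname{supp}(M)})$-module has at most $|\operatorname{supp}(M)|$ summands while $n-|S|=|Q_0\setminus S|\ge|\operatorname{supp}(M)|$, which forces $\operatorname{supp}(M)=Q_0\setminus S$ and summand count $|\operatorname{supp}(M)|$, so $M$ is support tilting by the core claim. Conversely, if $M$ is support tilting with $V=\operatorname{supp}(M)$ and $S=Q_0\setminus V$, the core claim gives that $M$ is rigid over $kQ$ with $|V|$ summands and $\operatorname{supp}(M)\cap S=\varnothing$, so $M\oplus\bigl(\bigoplus_{i\in S}P_i[1]\bigr)$ is rigid in $\C$ with $|V|+|S|=n$ summands, hence cluster tilting. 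The two assignments are visibly mutually inverse (the forward map recovers $S=Q_0\setminus\operatorname{supp}(M)$) and preserve basicness and isomorphism classes. The main obstacle is controlling $\ann(M)$: one must rule out that the annihilator of a suitably rigid module is strictly larger than the evident idempotent ideal, and this is exactly where faithfulness of tilting modules over the hereditary support algebra is used---once the structural facts about $\C$ from \cite{BMRRT} are in hand, the remainder is routine homological bookkeeping.
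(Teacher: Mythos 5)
The paper does not actually prove this statement: it is quoted as background from Ingalls--Thomas \cite{IT}, so there is no internal proof to compare against. Judged on its own, your argument is correct and is the standard ``direct'' proof via the decomposition $T=M\oplus\bigl(\bigoplus_{i\in S}P_i[1]\bigr)$: the structural facts you invoke (shape of the indecomposables of $\C$, the formula $\Ext^1_\C(M,N)\cong\Ext^1_{kQ}(M,N)\oplus D\Ext^1_{kQ}(N,M)$, $\Ext^1_\C(P_i[1],M)\cong\Hom_{kQ}(P_i,M)$, and ``basic rigid with $n$ summands $=$ cluster tilting'') are all in \cite{BMRRT}, and the module-theoretic core (rigid with $|\operatorname{supp}M|$ summands $\Leftrightarrow$ support tilting, via Bongartz completion over the hereditary support algebra and faithfulness of tilting modules) is sound; in particular you correctly isolate and resolve the only delicate point, namely that $kQ/\ann(M)$ coincides with the support algebra when $M$ is tilting over the latter, which is what makes the theorem come out for the annihilator-based definition of support tilting used in this paper. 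This route differs from Ingalls--Thomas's original proof, which obtains the bijection by passing through finitely generated torsion classes (and, in their paper, wide subcategories/noncrossing partitions), but it is closer in spirit to the $\tau$-tilting bijection of \cite{AIR} and to the categorical lifting arguments of \cite{HJ} that the present paper generalizes, so it fits the context well. Two small points you should spell out if you write this up: the count ``number of simples of $kQ/\ann(M)$ equals $|\operatorname{supp}(M)|$'' needs the (easy) observation that, $Q$ being acyclic, an element of $\ann(M)$ with nonzero coefficient at $e_i$ forces $e_i\in\ann(M)$; and the cardinality squeeze in the forward direction should note that the $P_i[1]$, $i\in S$, are pairwise nonisomorphic and not isomorphic to modules, so the summand count $n-|S|$ for $M$ is legitimate.
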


Cluster tilting theory gives a way to construct abelian categories from triangulated categories.
Koenig and Zhu \cite[Theorem 3.2]{KZ} showed that if $\C$ is a triangulated category and $\X$ is a cluster tilting  subcategory of $\C$, then
the quotient category $\C/\X$ is an abelian category. Moreover,  the category $\C/\X$ is Gorenstein of Gorenstein dimension at most one \cite[Theorem 4.3]{KZ}, which implies that it is either hereditary or of infinite global dimension.

Now we assume that $k$ is an algebraically closed field and $\C$ is a $k$-linear triangulated category
with finite dimensional Hom spaces and split idempotents which has Serre functor $\mathbb{S}$.
Let $\X$ be a cluster tilting subcategory of $\C$. Suppose that $\C/\X$ is hereditary.
Holm and J{\o}rgensen \cite[Definition 2.1]{HJ} introduced the notion of support tilting subcategories which is a generalization of support tilting modules and proved the following theorem.

\begin{thm}{\rm (Holm and J{\o}rgensen)}
 Let $\W$ be a cluster tilting subcategory of $\C$. Then the image $\overline{\W}$ is a support tilting subcategory of $\C/\X$.
\end{thm}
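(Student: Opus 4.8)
The plan is to verify the two requirements of the Holm--J{\o}rgensen definition \cite{HJ}. Write $\B:=\C/\X$, let $\overline{(\cdot)}\colon\C\to\B$ denote the quotient functor, and recall the structural facts about $\B$ coming from Koenig--Zhu \cite{KZ} (and related work of Iyama--Yoshino), which are the only inputs I will need: $\B$ is abelian, $\overline{(\cdot)}$ is the identity on objects, every triangle $A\to B\to C\to A[1]$ of $\C$ maps to a sequence $\overline A\to\overline B\to\overline C$ which is exact at $\overline B$, the short exact sequences of $\B$ are (up to isomorphism) exactly the ones induced by suitable triangles of $\C$, and $\C(A,B[1])=0$ implies $\Ext^1_{\B}(\overline A,\overline B)=0$ (direct summands in $\X$ being killed). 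By hypothesis $\B$ is in addition hereditary and of finite length. I will show that $\overline{\W}$ is a rigid subcategory of $\B$ and a tilting subcategory of the smallest Serre subcategory of $\B$ containing it.

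Rigidity is quick: $\W$ is cluster tilting, hence rigid, so $\C(\W,\W[1])=0$, and therefore $\Ext^1_{\B}(\overline{\W},\overline{\W})=0$; passing to additive closures, $\overline{\W}$ is rigid in $\B$. Next, let $\s$ be the full subcategory of $\B$ of objects all of whose composition factors occur as composition factors of objects of $\overline{\W}$; since $\B$ has finite length, $\s$ is closed under subobjects, quotients and extensions, hence is the smallest Serre subcategory of $\B$ containing $\overline{\W}$, and in particular $\overline{\W}\subseteq\s$. It remains to prove that $\overline{\W}$ is a tilting subcategory of $\s$. As $\s$ is hereditary and $\overline{\W}$ is rigid, the usual characterisation of tilting subcategories of a hereditary abelian category reduces this to one statement: every $B\in\s$ fits into a short exact sequence $0\to B\to T^{0}\to T^{1}\to 0$ with $T^{0},T^{1}\in\overline{\W}$. (Granting this, if $S\in\s$ had $\Hom_{\s}(\overline{\W},S)=0=\Ext^1_{\s}(\overline{\W},S)$, applying $\Hom_{\s}(-,S)$ to such a coresolution of $S$ and using these vanishings would force $\Hom_{\s}(S,S)=0$, so $S=0$; together with $\pd_{\s}\overline{\W}\le 1$ this is precisely the tilting condition.)

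To construct the coresolution, write $B=\overline C$ and take a left $\W$-approximation triangle $C\xrightarrow{f}W^{0}\xrightarrow{g}W^{1}\xrightarrow{h}C[1]$ with $W^{0},W^{1}\in\W$; such triangles exist because cluster tilting subcategories are functorially finite, and the cone $W^{1}$ again lies in $\W$ by cluster tilting. Applying $\overline{(\cdot)}$, the sequence $\overline C\xrightarrow{\overline f}\overline{W^{0}}\xrightarrow{\overline g}\overline{W^{1}}$ is exact at $\overline{W^{0}}$, so it suffices to show that $\overline f$ is a monomorphism and $\overline g$ an epimorphism. Rotating the triangle and invoking the Koenig--Zhu description of monomorphisms and epimorphisms in $\C/\X$, these two conditions reduce to the assertion that the connecting morphism $h\colon W^{1}\to C[1]$, together with $h[-1]$, factors through $\X$ --- equivalently that $\overline f$ is a kernel and $\overline g$ a cokernel in $\B$. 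Here the hypothesis $B\in\s$ enters decisively: using the approximation properties of $f$ and $g$ and the fact that $B=\overline C$ lies in $\s$, one shows that any component of $h$ surviving under $\overline{(\cdot)}$ would produce a composition factor of $\overline C$ not occurring in $\overline{\W}$, contrary to $B\in\s$; hence $\overline h=0$, and likewise for $h[-1]$. Then $0\to\overline C\to\overline{W^{0}}\to\overline{W^{1}}\to 0$ is the desired coresolution, with $\overline{W^{0}},\overline{W^{1}}\in\overline{\W}\subseteq\s$, which finishes the proof: $\overline{\W}$ is tilting in $\s$, hence a support tilting subcategory of $\C/\X$.

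The main obstacle is exactly this last step: turning a $\W$-approximation triangle of $\C$ into an honest short exact sequence of $\C/\X$, and extracting from the support condition $B\in\s$ the vanishing of the connecting morphism modulo $\X$. This is where one genuinely uses that $\W$ is cluster tilting --- ``large'' --- rather than merely rigid, and where the fine structure of $\C/\X$ from \cite{KZ} carries the argument. Everything else --- rigidity, the choice of the Serre subcategory $\s$, and the passage from the coresolution property to the tilting property --- is formal.
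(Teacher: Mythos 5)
Your argument has a genuine gap at its central step. You replace the Holm--J{\o}rgensen definition by the requirement that $\overline{\W}$ be tilting in the smallest Serre subcategory $\s$ of $\C/\X$ containing $\overline{\W}$, and you then reduce this to the claim that \emph{every} $B\in\s$ admits a short exact sequence $0\to B\to T^0\to T^1\to 0$ with $T^0,T^1\in\overline{\W}$; correspondingly you claim that for any $C$ with $\overline{C}\in\s$ the connecting morphism $h$ of the approximation triangle $C\to W^0\to W^1\to C[1]$ dies in $\C/\X$. Both claims are false. Take $\C$ the cluster category of the quiver $1\to 2$, let $T$ be a cluster tilting object with $\End_{\C}(T)\cong kA_2$, put $\X=\add T[1]$, so $\C/\X\simeq\mod kA_2$, and take $\W=\add T$, whose image is $\overline{\W}=\add(P_1\oplus P_2)$, the projectives. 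Every simple is a composition factor of $P_1\oplus P_2$, so your $\s$ is all of $\mod kA_2$; but the simple top $S_1$ of $P_1$ admits no nonzero morphism (in particular no monomorphism) into any projective, so no coresolution $0\to S_1\to T^0\to T^1\to 0$ with $T^i\in\overline{\W}$ exists, even though $\overline{\W}$ is an honest tilting (hence support tilting) subcategory. Equivalently, for $C$ with $\overline{C}=S_1$ the morphism $\overline{f}$ in your triangle cannot be monic, so $\overline{h}\neq 0$: the support condition $\overline C\in\s$ does not force $h$ to factor through $\X$. The Bongartz-type coresolution characterisation of tilting concerns a projective generator, not every object, and the last axiom of Holm--J{\o}rgensen is correspondingly weaker: it only asks that a subquotient $A$ of $\overline{\W}$ with $\Ext^1(\overline{\W},A)=0$ be a \emph{quotient} of an object of $\overline{\W}$.

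This is also where your route diverges from the paper's. The paper does not pass through Serre subcategories at all: it proves the generalisation Theorem~\ref{main1} (and deduces the cluster tilting case in Corollary~\ref{cor1}) by verifying the five axioms of the definition directly --- closure under sums/summands and functorial finiteness are inherited, $\Ext^2$-vanishing comes from heredity, rigidity of $\overline{\W}$ comes from $\EE(\W,\W)=0$ via Proposition~\ref{imp}, and the crucial subquotient axiom is handled by the pullback/pushout diagram, a right $\W$-approximation, Corollary~\ref{ses} (which converts an approximation $\EE$-triangle into a short exact sequence in the quotient), and the hypothesis $\overline{\W^{\bot_1}}\subseteq\Fac\overline{\W}$, which is automatic for cluster tilting since $\W^{\bot_1}=\W$. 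You also leave the functorial finiteness, closure, and $\Ext^2$ axioms unaddressed (easy, but part of the definition), and the asserted equivalence ``support tilting $\Leftrightarrow$ rigid $+$ tilting in $\s$'' is itself unproved in this generality. To repair the proof, redirect the hard step: instead of trying to embed objects of $\s$ into $\overline{\W}$, show that the $\Ext^1$-orthogonal subquotients of $\overline{\W}$ are generated by $\overline{\W}$, along the lines of the approximation argument above.
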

Under certain conditions, Holm and J{\o}rgensen also showed that  the inverse direction of this theorem is also true.
This gives a category version of Ingalls-Thomas's result.

In \cite{AIR}, Adachi, Iyama and Reiten introduced a generalization of classical tilting theory, which is called $\tau$-tilting theory. According to \cite[Proposition 2.2]{AIR}, if $\Lambda$ is a finite dimensional algebra, then any $\tau$-tilting module is support tilting by definition. They also proved that for a $2$-Calabi-Yau triangulated category $\C$ with a cluster tilting object $T$, there exists a
 bijection between the basic cluster tilting objects in $\C$ and the basic support $\tau$-tilting modules ($\tau$-tilting modules are support $\tau$-tilting) in $\mod\End_\C(T)^{\textrm{op}}$ (which is equivalent to the quotient category $\C/\add T[1]$). This bijection was generalized first by Yang and Zhu \cite{YZ} by introducing the notion of relative cluster tilting objects in a triangulated category with a cluster tilting object, later by Fu, Geng and Liu \cite{FGL}  by introducing the notion of relative rigid objects in a triangulated category $\C$ with a rigid object. When we combine all these results, we can get a hint that support tilting subcategories in the quotient category may have some relations with relative rigid objects in the original category. Hence it is reasonable to investigate what subcategory is a support tilting subcategory in the quotient category related to, under a more general setting.

We want our results to be valid not only on triangulated categories, but also on exact categories. The notion of extriangulated categories was introduced by Nakaoka and Palu in \cite{NP} as a simultaneous generalization of
exact categories and triangulated categories. Exact categories and triangulated categories are extriangulated categories, while there are some other examples of extriangulated categories which are neither exact nor triangulated, see \cite{NP,ZZ1}. In this article, we work under the following settings. Let $k$ be a field and $\B$ be a Krull-Schmidt, Hom-finite, $k$-linear extriangulated category with enough projectives $\mathcal P$ and enough injectives $\mathcal I$ (please see Section 2 for more details of extriangulated category). 

We introduce the notion of fully rigid subcategories, which is a generalization of cluster tilting subcategories (it was defined in a triangulated category by Beligiannis \cite{B}, please see Section 2 for more details).

\begin{defn}
A subcategory $\C$ of $\B$ is called fully rigid if
\begin{itemize}
\item[\rm (1)] it admits a cotorsion pair $(\C,\mathcal K)$ and $\C\neq \mathcal P$;

\item[\rm (2)] any indecomposable object in $\B$ either belongs to $\mathcal K$ or belongs to $$\h:=\{X\in \B\text{ }|\text{ } \text{there is an}~ \EE\text{-triangle }X \longrightarrow C_1 \longrightarrow C_2 \dashrightarrow ~\textrm{where}~ C_1,C_2\in\C \}.$$
 \end{itemize}
\end{defn}

\begin{rem}
It can be shown that when $\C$ is fully rigid, then it is contravariantly finite, rigid and $\B/\C^{\bot_1}$ is an abelian category where $\C^{\bot_1}=\{X\in \B \text{ }|\text{ } \EE(\C,X)=0\}$.
\end{rem}

Under these more general assumptions, we can show the following theorem (please see Theorem \ref{main1} and Corollary \ref{cor1} for more details), which generalizes \cite[Theorem 2.2]{HJ}.

\begin{thm}
Let $\C$ be a fully rigid subcategory of $\B$ and $\M$ be a cluster tilting subcategory of $\B$. We also assume that $\C^{\bot_1}$ is contravariantly finite and $\B/\C^{\bot_1}$ is hereditary. Then $\overline{\M}$ is a support tilting subcategory in $\B/\C^{\bot_1}$.
\end{thm}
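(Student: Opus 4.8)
The plan is to check that $\overline{\M}$ fulfils the two conditions defining a support tilting subcategory of the hereditary, finite length abelian category $\A:=\B/\C^{\bot_1}$: first, that $\Ext^1_{\A}(\overline{\M},\overline{\M})=0$; and second, that $\overline{\M}$ is a tilting subcategory of the Serre subcategory $\s$ of $\A$ that it generates. In the hereditary finite length setting the second condition is equivalent to asking that $\overline{\M}$ be rigid and that the orthogonal class $\N:=\{X\in\A\mid \Hom_{\A}(\overline{\M},X)=0=\Ext^1_{\A}(\overline{\M},X)\}$ be a Serre subcategory of $\A$, i.e.\ that $\overline{\M}$ be tilting on its support.

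Write $\K:=\C^{\bot_1}$ and let $\pi\colon\B\to\A=\B/\K$ be the quotient functor. First I would assemble the homological dictionary between $\B$ and $\A$ from the preceding sections: $\overline{K}=0$ for $K\in\K$, the projective objects of $\A$ are the images of the objects of $\C$ (so, by heredity, projective resolutions in $\A$ have length one), and (co)kernels in $\A$, hence $\Ext^1_{\A}$, are computed from the extriangulated structure of $\B$ relative to the cotorsion pair $(\C,\K)$ and from the defining property of $\h$. I would also fix, for each $B\in\B$, the two $\M$-approximation $\EE$-triangles available because $\M$ is cluster tilting: since $\M={}^{\bot_1}\M$ and $\M$ is contravariantly finite there is an $\EE$-triangle $M_1\to M_0\to B\dashrightarrow$ with $M_0,M_1\in\M$, and dually, since $\M=\M^{\bot_1}$, there is an $\EE$-triangle $B\to M^0\to M^1\dashrightarrow$ with $M^0,M^1\in\M$.

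For \emph{rigidity}, let $X,Y\in\M$. Via the description of $\Ext^1_{\A}$ above, a nonzero element of $\Ext^1_{\A}(\overline{X},\overline{Y})$ produces a non-split extension of $X$ by $Y$ in the extriangulated structure of $\B$, that is, a nonzero class in $\EE_{\B}(X,Y)$ modulo summands lying in $\K$ (which are annihilated by $\pi$); but $\EE_{\B}(X,Y)=0$ as $\M$ is rigid, so $\Ext^1_{\A}(\overline{X},\overline{Y})=0$ and $\overline{\M}$ is rigid in $\A$. For the \emph{support} condition, applying $\pi$ to $B\to M^0\to M^1\dashrightarrow$ and computing the cokernel in $\A$ shows that every object of $\A$ is, up to corrections coming from $\K$, covered on the right by $\overline{\M}$, and dually $M_1\to M_0\to B\dashrightarrow$ gives the covering on the left. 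The crucial point is that these corrections vanish precisely on the objects of the Serre subcategory $\s$ generated by $\overline{\M}$, so that over $\s$ the images of the two approximation triangles become short exact sequences with terms in $\add\overline{\M}$; together with rigidity and $\pd_{\A}\le 1$ (from heredity) this exhibits $\overline{\M}$ as a tilting subcategory of $\s$. To identify $\s$ — equivalently, to prove $\N$ is a Serre subcategory — I would use that $\M$ is not merely rigid but cluster tilting: closure of $\N$ under quotients and extensions is immediate from the long exact sequence for $\Hom_{\A}(\overline{\M},-)$, while closure under subobjects, which can fail for an arbitrary rigid subcategory, is forced by the maximality $\M={}^{\bot_1}\M=\M^{\bot_1}$ together with heredity of $\A$, by lifting a subobject of an object of $\N$ back to $\B$ and splicing it with the approximation triangles of $\M$.

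The main obstacle is exactly this last step: converting the right-exact images of the $\M$-approximation $\EE$-triangles into genuine short exact sequences over the correct Serre subcategory, and proving that this Serre subcategory coincides with the one generated by $\overline{\M}$. Controlling the correction terms coming from $\K$ — equivalently, understanding precisely which $\EE$-triangles of $\B$ descend to short exact sequences of $\A$ — is the technical heart of the argument, and it is here that the fully rigid hypothesis on $\C$ and the additional assumption that $\C^{\bot_1}$ be contravariantly finite are needed.
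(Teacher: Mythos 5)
Your proposal does not prove the statement as it stands, for two reasons. First, you replace the definition of support tilting that is actually in force here (Holm--J{\o}rgensen's: closed under sums and summands, functorially finite, $\Ext^2_{\oB}(-,-)=0$ on the left argument, $\Ext^1_{\oB}(\overline{\M},\overline{\M})=0$, and the condition that any subquotient $Y$ of an object of $\overline{\M}$ with $\Ext^1_{\oB}(\overline{\M},Y)=0$ is a quotient of an object of $\overline{\M}$) by an unproved reformulation, namely that $\overline{\M}$ be rigid and tilting on the Serre subcategory it generates, equivalently that the perpendicular class $\N$ be Serre. You assert this equivalence ``in the hereditary finite length setting'', but finite length is not a hypothesis of this statement (it only enters in Section 4), and even granting it, the equivalence for subcategories (rather than single modules over a finite dimensional algebra) is exactly the kind of thing that needs an argument; none is given. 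Second, and more seriously, the step you yourself call ``the technical heart'' --- deciding which $\EE$-triangles of $\B$ descend to short exact sequences of $\oB$, and deducing that the relevant objects are quotients of $\overline{\M}$ --- is left unresolved. In the paper this is precisely the content of Corollary \ref{ses} (an $\EE$-triangle $Y\to Z\to X\dashrightarrow$ with $X\in\h_{\C}$, $\pd_{\oB}(X)\le 1$ and $\overline{f}$ epi descends to a short exact sequence) together with the pullback/approximation argument in Theorem \ref{main1}, and it is carried out for a general $\V$ with $\mathcal P\subseteq\V$ and $\overline{\V^{\bot_1}}\subseteq\Fac\overline{\V}$; the statement then follows because for a cluster tilting $\M$ one has $\M^{\bot_1}=\M$, so the hypothesis $\overline{\M^{\bot_1}}\subseteq\Fac\overline{\M}$ is automatic. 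Your plan never identifies this key reduction, and your sketch for closure of $\N$ under subobjects (``lifting a subobject back to $\B$ and splicing with approximation triangles'') is not an argument.

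A smaller but related gap: your rigidity step claims that a nonzero class in $\Ext^1_{\oB}(\overline{X},\overline{Y})$ lifts directly to a nonzero class in $\EE(X,Y)$ ``modulo summands in $\K$''. The actual comparison is Proposition \ref{imp}: $\Ext^1_{\oB}(X,Y)$ is computed as $\overline{[\C]}(X,\Sigma Y)$, and this identification already uses $\pd_{\oB}(X)\le 1$ (via the explicit $\Omega\C$-resolution from the diagram $(\maltese)$ and Proposition \ref{imp2}); only then does $\EE(X,Y)=0$ imply $\Ext^1_{\oB}(X,Y)=0$. So even the ``easy'' half of your plan rests on the machinery of Section 2 that you do not reconstruct. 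In short, the proposal correctly locates the difficulty but defers it, whereas the paper resolves it through Corollary \ref{ses}, Proposition \ref{imp}, and the observation that cluster tilting subcategories satisfy the hypotheses of Theorem \ref{main1} trivially.
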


The notion of relative rigid can be generalized to an extriangulated category (see \cite{LZII}), and we find the relation between support tilting subcategories in $\B/\C^{\bot_1}$ and maximal relative rigid subcategories (we also call them maximal $\C$-rigid subcategories, please see Section 4 for more details) in $\h$.

\begin{thm}
Let $\C$ be a fully rigid subcategory of $\B$. We also assume that $\C^{\bot_1}$ is contravariantly finite, $\B/\C^{\bot_1}$ is hereditary and has finite length.
\begin{itemize}
\item[(a)] For any support tilting subcategories $\overline{\W}$ in $\B/\C^{\bot_1}$, there is a unique maximal $\C$-rigid subcategory $\X$ in $\h$ such that $\overline{\X}=\overline{\W}$ in $\B/\C^{\bot_1}$.
\vspace{1mm}
\item[(b)] Moreover, if $\C=\add C$, there is a bijection between support tilting subcategories $\overline{\W}=\add W$ in $\B/\C^{\bot_1}$ and maximal $\C$-rigid subcategory $\X=\add X$ in $\h$.
\end{itemize}
\end{thm}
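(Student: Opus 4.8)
The plan is to build a map in each direction and show they are mutually inverse, using the ``$\C$-rigid completion'' philosophy behind part (a) as the engine for part (b). For part (a), given a support tilting subcategory $\oW$ in $\B/\C^{\bot_1}$, I would first lift each indecomposable summand of $\oW$ to an object of $\h$ (possible since, by the Remark after the definition of fully rigid, every indecomposable of $\B$ lies in $\K=\C^{\bot_1}$ or in $\h$, and objects of $\h$ are precisely those whose image we allow). The subcategory $\W$ of $\h$ obtained this way need not be maximal $\C$-rigid, but one shows $\oW$ being rigid in the abelian category $\B/\C^{\bot_1}$ forces $\W$ to be $\C$-rigid in $\h$; then one enlarges $\W$ to a maximal $\C$-rigid $\X\supseteq\W$ inside $\h$. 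The key point is that the extra indecomposable summands of $\X$ not already in $\W$ must map into the zero object of $\B/\C^{\bot_1}$ (otherwise the support tilting / Bongartz-type maximality of $\oW$, using that $\B/\C^{\bot_1}$ is hereditary of finite length, would be violated), so $\oX=\oW$; and the finite length hypothesis gives that the completion terminates. Uniqueness of $\X$: if $\X'$ is another maximal $\C$-rigid subcategory of $\h$ with $\oX'=\oW$, then $\X$ and $\X'$ have the same image and both are ``rigid-closed'' relative to $\C$, and a direct argument (any indecomposable of $\X'$ either lies in the kernel $\ra \X'$ meets $\C$-related objects, or is detected by its image) shows $\X=\X'$.

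For part (b), assume $\C=\add C$. By the Remark, $\B/\C^{\bot_1}$ is abelian; hereditary and finite length make it a length category in which support tilting subcategories are generated by a single object $W$ (the number of non-isomorphic indecomposable summands being controlled by the rank, finitely many by finite length), so every support tilting subcategory has the form $\oW=\add W$. The map $\oW=\add W\mapsto \X$ from part (a) is then well-defined on these objects, landing in maximal $\C$-rigid subcategories which, by the same single-generator counting (maximal $\C$-rigid subcategories in $\h$ have a bounded number of indecomposable summands when $\C=\add C$), are all of the form $\X=\add X$. For the inverse, send a maximal $\C$-rigid $\X=\add X$ to its image $\oX$ in $\B/\C^{\bot_1}$; one must check $\oX$ is support tilting. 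This is where one invokes the analogue of \cite[Theorem 2.2]{HJ} / the Theorem quoted just above: $\C$-rigidity of $X$ translates to $\Ext^1=0$ for $\oX$ in the hereditary abelian category, and maximality of $X$ translates to $\oX$ being a tilting object over $\B/\C^{\bot_1}$ modulo the annihilator of $\oX$ — i.e., $\oX$ is not properly contained in a larger rigid subcategory of its supporting subcategory. Finally, $\oX=\oW\Rightarrow\X$ (from (a)) recovers $\X$ by uniqueness, and starting from $\oW$, going to $\X$ and back to $\oX=\oW$ is immediate; so the two maps are mutually inverse.

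The main obstacle I anticipate is the passage between maximality on the two sides. On the $\B$-side, ``maximal $\C$-rigid in $\h$'' is a closure condition involving $\EE$-triangles with middle/end terms in $\C$, whereas on the quotient side, ``support tilting'' is the classical condition that $\oX$ be a tilting module over $\B/(\C^{\bot_1}+\ann\oX)$. Showing these correspond requires: (i) identifying $\ann\oX$ with the part of $\B$ that becomes projective-injective from the point of view of $\X$, i.e. the extra summands that (a) produces; and (ii) a Bongartz-completion-type argument carried out inside $\B$ rather than in the quotient — lifting the completion of a partial tilting object to the completion of a $\C$-rigid subcategory. The finite length hypothesis on $\B/\C^{\bot_1}$ is essential here to guarantee these completions stop after finitely many steps and to transfer counting of indecomposable summands across the quotient functor; I would isolate this as a lemma (``a $\C$-rigid subcategory of $\h$ with $n$ indecomposable summands mapping to a rigid subcategory with $m\le n$ indecomposable non-zero summands extends to a maximal one'') and deduce both (a) and (b) from it together with the already-cited result that $\overline{\M}$ is support tilting for cluster tilting $\M$.
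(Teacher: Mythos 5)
Your outline reproduces the general shape of the result (lift, complete, compare images, invert by taking images), but at the two places where the actual work lies it only asserts what has to be proved. For part (a), the paper does not take an abstract maximal completion of the lifted $\W$: it writes the lifting down explicitly as $\X=\add(\W\cup\T)$ with $\T=\{C\in\C\mid\EE(C,\W)=0\}$, and uniqueness is precisely Lemma \ref{lift0} (any maximal $\C$-rigid lifting of $\overline{\W}$ must have this form, since its indecomposables outside $\W$ lie in $\C$, hence in $\T$ by rigidity, and $\T$ is forced inside by maximality); your uniqueness sketch is not an argument. More seriously, the heart of (a) is maximality of this lift: one must show that if $X\in\h_\C$ is indecomposable and $\add(\X\cup\add X)$ is $\C$-rigid, then $X\in\W$. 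Your proposal replaces this by the assertion that otherwise ``the support tilting / Bongartz-type maximality of $\overline{\W}$ would be violated'', but support tilting carries no such maximality clause; what the paper actually does is use the Auslander--Reiten duality of \cite{INP} together with Lemma \ref{proj} and Lemma \ref{tau1} ($\tau\Omega\C_{\mathcal P}=\D_{\mathcal I}$) to show that any injective $\Sigma D_X$ of $\oB$ with $\Hom_{\oB}(X,\Sigma D_X)\neq 0$ also receives a nonzero map from $\overline{\W}$, so by \cite[Lemma 3.4]{HJ} $X$ is a subquotient of $\overline{\W}$; then the subquotient clause in the definition of support tilting, heredity, and a splitting argument force $X$ to be a summand of an object of $\W$. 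Nothing in your plan supplies a substitute for this chain, and the ``finite length makes the completion terminate'' remark does not address it.

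For part (b), the substantive half is the inverse map: showing that the image of a maximal $\C$-rigid $\X=\add X$ is support tilting. Your claim that ``maximality of $X$ translates to $\overline{\X}$ being a tilting object modulo the annihilator'' is exactly what needs proof and is not a formal consequence of maximality (maximal rigid objects are not automatically tilting, and maximality in $\h$ is an extriangulated condition that does not obviously descend to $\oB$); also Corollary \ref{cor1} cannot be invoked, since $\X$ need not be cluster tilting. The paper instead verifies the criterion of Theorem \ref{main1}, namely $\mathcal P\subseteq\X$ and $\overline{\X^{\bot_1}}\subseteq\Fac\overline{\X}$, and this verification genuinely uses $\tau$-tilting input: writing $X=W\oplus C_X$ with $W\in\h_\C$, $C_X\in\C$, the pair $(W,\Omega C_X)$ is support $\tau_{\oB}$-tilting by \cite[Theorem 3.11]{LZII}, so \cite[Corollary 2.13]{AIR} gives $\Fac\overline{\X}=\overline{W}^{\bot_1}\cap\overline{\Omega C_X}^{\bot}$, and $\overline{\Omega C_X}^{\bot}=\overline{C_X^{\bot_1}}$ yields the required inclusion. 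Your proposal contains no counterpart of this step (and your side claim that every support tilting subcategory of $\oB$ is of the form $\add W$ is neither needed for (b) as stated nor justified). So both directions of the proposed bijection rest on unproved assertions, and the proposal as it stands has genuine gaps.
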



This article is organized as follows. In Section 2, we review some elementary properties of extriangulated category
that we need and show some results which will be used later. In Section 3, we show our first main result (see Theorem \ref{main1} and Corollary \ref{cor1}). In Section 4, we study the relation between support tilting subcategories in the quotient category and maximal relative rigid subcategories in $\B$. In Section 5, we give an example to explain our main results.

\medskip
\section{Preliminaries}

\subsection{Extriangulated categories}
Let us briefly recall the definition and some basic properties of extriangulated category from \cite{NP}.
We omit some details here, but the reader can find
them in \cite{NP}.

Let $\B$ be an additive category equipped with an additive bifunctor
$$\mathbb{E}: \B^{\rm op}\times \B\rightarrow {\rm Ab},$$
where ${\rm Ab}$ is the category of abelian groups. For any objects $A, C\in\B$, an element $\delta\in \mathbb{E}(C,A)$ is called an $\mathbb{E}$-extension.
Let $\mathfrak{s}$ be a correspondence which associates an equivalence class $$\mathfrak{s}(\delta)=\xymatrix@C=0.8cm{[A\ar[r]^x
 &B\ar[r]^y&C]}$$ to any $\mathbb{E}$-extension $\delta\in\mathbb{E}(C, A)$. This $\mathfrak{s}$ is called a {\it realization} of $\mathbb{E}$, if it makes the diagrams in \cite[Definition 2.9]{NP} commutative.
 A triplet $(\B, \mathbb{E}, \mathfrak{s})$ is called an {\it extriangulated category} if it satisfies the following conditions.
\begin{enumerate}
\setlength{\itemsep}{2.5pt}
\item $\mathbb{E}\colon\B^{\rm op}\times \B\rightarrow \rm{Ab}$ is an additive bifunctor.

\item $\mathfrak{s}$ is an additive realization of $\mathbb{E}$.

\item $\mathbb{E}$ and $\mathfrak{s}$  satisfy the compatibility conditions in \cite[Definition 2.12]{NP}.
 \end{enumerate}
\smallskip

We will use the following terminology.

\begin{defn}\label{dein}{\cite{NP}}
Let $(\B,\EE,\mathfrak{s})$ be an extriangulated category.
\begin{itemize}
\setlength{\itemsep}{2.5pt}
\item[{\rm (1)}] A sequence $A\xrightarrow{~x~}B\xrightarrow{~y~}C$ is called a {\it conflation} if it realizes some $\EE$-extension $\del\in\EE(C,A)$. In this case, $x$ is called an {\it inflation} and $y$ is called a {\it deflation}.

\item[{\rm (2)}] If a conflation $A\xrightarrow{~x~}B\xrightarrow{~y~}C$ realizes $\delta\in\mathbb{E}(C,A)$, we call the pair $( A\xrightarrow{~x~}B\xrightarrow{~y~}C,\delta)$ an {\it $\EE$-triangle}, and write it in the following way.
$$A\overset{x}{\longrightarrow}B\overset{y}{\longrightarrow}C\overset{\delta}{\dashrightarrow}$$
We usually do not write this $``\delta"$ if it is not used in the argument.
\item[{\rm (3)}] Let $A\overset{x}{\longrightarrow}B\overset{y}{\longrightarrow}C\overset{\delta}{\dashrightarrow}$ and $A^{\prime}\overset{x^{\prime}}{\longrightarrow}B^{\prime}\overset{y^{\prime}}{\longrightarrow}C^{\prime}\overset{\delta^{\prime}}{\dashrightarrow}$ be any pair of $\EE$-triangles. If a triplet $(a,b,c)$ realizes $(a,c)\colon\delta\to\delta^{\prime}$, then we write it as
$$\xymatrix{
A \ar[r]^x \ar[d]^a & B\ar[r]^y \ar[d]^{b} & C\ar@{-->}[r]^{\del}\ar[d]^c&\\
A'\ar[r]^{x'} & B' \ar[r]^{y'} & C'\ar@{-->}[r]^{\del'} &}$$
and call $(a,b,c)$ a {\it morphism of $\EE$-triangles}.

\item[{\rm (4)}] An object $P\in\B$ is called {\it projective} if
for any $\EE$-triangle $A\overset{x}{\longrightarrow}B\overset{y}{\longrightarrow}C\overset{\delta}{\dashrightarrow}$ and any morphism $c\in\B(P,C)$, there exists $b\in\B(P,B)$ satisfying $yb=c$.
We denote the subcategory of projective objects by $\mathcal P\subseteq\B$. Dually, the subcategory of injective objects is denoted by $\I\subseteq\B$.

\item[{\rm (5)}] We say that $\B$ {\it has enough projective objects} if
for any object $C\in\B$, there exists an $\EE$-triangle
$A\overset{x}{\longrightarrow}P\overset{y}{\longrightarrow}C\overset{\delta}{\dashrightarrow}$
satisfying $P\in\mathcal P$. Dually we can define $\B$ {\it has enough injective objects}.

\end{itemize}
\end{defn}

Let $k$ be a field and $(\B,\mathbb{E},\mathfrak{s})$ be a Krull-Schmidt, Hom-finite, $k$-linear extriangulated category with enough projectives $\mathcal P$ and enough injectives $\mathcal I$.
\smallskip

By \cite{NP}, we give the following useful remark, which will be used later in the proofs.

\begin{rem}\label{useful}
Let $\xymatrix{A\ar[r]^a &B \ar[r]^b &C \ar@{-->}[r] &}$ and $\xymatrix{X\ar[r]^x &Y \ar[r]^y &Z \ar@{-->}[r] &}$ be two $\EE$-triangles. Then
\begin{itemize}
\item[(a)] In the following commutative diagram
$$\xymatrix{
X\ar[r]^x \ar[d]_f &Y \ar[d]^g \ar[r]^y &Z \ar[d]^h \ar@{-->}[r] &\\
A\ar[r]^a &B \ar[r]^b &C \ar@{-->}[r] &}
$$
$f$ factors through $x$ if and only if $h$ factors through $b$.
\item[(b)] In the following commutative diagram
$$\xymatrix{
A\ar[r]^a \ar[d]_s &B \ar[d]^r \ar[r]^b &C \ar[d]^t \ar@{-->}[r] &\\
X\ar[r]^x \ar[d]_f &Y \ar[d]^g \ar[r]^y &Z \ar[d]^h \ar@{-->}[r] &\\
A\ar[r]^a &B \ar[r]^b &C \ar@{-->}[r] &}
$$
$fs=1_A$ implies $B$ is a direct summand of $C\oplus Y$ and $C$ is a direct summand of $Z\oplus B$; $ht=1_C$ implies $B$ is a direct summand of $A\oplus Y$ and $A$ is a direct summand of $X\oplus B$.
\item[(c)] Let $A\overset{x}{\longrightarrow}B\overset{y}{\longrightarrow}C\overset{\delta}{\dashrightarrow}$ be any $\EE$-triangle, let $f\colon A\rightarrow D$ be any morphism, and let $D\overset{d}{\longrightarrow}E\overset{e}{\longrightarrow}C\overset{f_{\ast}\delta}{\dashrightarrow}$ be any $\EE$-triangle realizing $f_{\ast}\delta$. Then there is a morphism $g$ which gives a morphism of $\EE$-triangles
$$\xymatrix{
A \ar[r]^{x} \ar[d]_f &B \ar[r]^{y} \ar[d]^g &C \ar@{=}[d]\ar@{-->}[r]^{\delta}&\\
D \ar[r]_{d} &E \ar[r]_{e} &C\ar@{-->}[r]_{f_{\ast}\delta}&
}
$$
and moreover, the sequence $A\overset{\svecv{f}{x}}{\longrightarrow}D\oplus B\overset{\svech{d}{-g}}{\longrightarrow}E\overset{e^{\ast}\delta}{\dashrightarrow}$ becomes an $\EE$-triangle.
\end{itemize}
\end{rem}

\begin{defn}
Let $\B'$ and $\B''$ be two subcategories of $\B$.
\begin{itemize}
\item[(a)] Denote by $\CoCone(\B',\B'')$ the subcategory
$$\{X\in \B \text{ }|\text{ } ~\textrm{there exists an}~ \text{ } \EE\text{-triangle } \xymatrix@C=0.8cm@R0.6cm{ X \ar[r] &B' \ar[r] &B'' \ar@{-->}[r] &} \text{, }B'\in \B' \text{ and }B''\in \B'' \};$$
\item[(b)] Denote by $\Cone(\B',\B'')$ the subcategory
$$\{X\in \B \text{ }|\text{ } ~\textrm{there exists an}~ \text{ } \EE\text{-triangle } \xymatrix@C=0.8cm@R0.6cm{B' \ar[r] &B'' \ar[r] &X \ar@{-->}[r] &} \text{, }B'\in \B' \text{ and }B''\in \B''  \};$$
\item[(c)] Let $\Omega \B'=\CoCone(\mathcal P,\B')$. We write an object $D$ in the form $\Omega B$ if it admits an $\EE$-triangle $\xymatrix@C=0.8cm@R0.6cm{D \ar[r] &P \ar[r] &B \ar@{-->}[r] &}$ where $P\in \mathcal P$;
\item[(d)] Let $\Sigma \B'=\Cone(\B',\mathcal I)$. We write an object $D'$ in the form $\Sigma B'$ if it admits an $\EE$-triangle $\xymatrix@C=0.8cm@R0.6cm{B' \ar[r] &I \ar[r] &D' \ar@{-->}[r] & }$ where $I\in \mathcal I$.
\end{itemize}

\end{defn}

\begin{defn}\cite[Definition 2.10]{ZZ2}
 Let $\C$ be a subcategory of $\B$.
\begin{itemize}
\item[\rm (1)] $\C$ is called rigid if $\EE(\C,\C)=0$;

\item[\rm (2)] $\C$ is called \emph{weak cluster tilting}, if it satisfies the
following conditions:
\begin{itemize}

\item[\rm (a)] $X\in\C$ if and only if $\EE (X, \C) = 0$;

\item[\rm (b)] $X\in\C$ if and only if $\EE(\C, X) = 0$.
\end{itemize}
\item[\rm (3)] $\C$ is called \emph{cluster tilting} if it is functorially finite and weak cluster tilting.
\end{itemize}
\end{defn}

From this definition, we know that if $\C$ is (weak) cluster tilting, then it is closed under isomorphisms and direct summands, and contains all the projectives and all the injectives.

\begin{defn}\cite[Definition 2.1 and Definition 4.12]{NP}
Let $\U$ and $\V$ be two subcategories of $\B$ which are closed under direct summands. We call $(\U,\V)$ a \emph{cotorsion pair} if it satisfies the following conditions:
\begin{itemize}
\setlength{\itemsep}{2.5pt}
\item[{\rm (a)}] $\EE(\U,\V)=0$.

\item[{\rm (b)}] For any object $B\in \B$, there exist two $\EE$-triangles
\begin{align*}
V_B\rightarrow U_B\rightarrow B{\dashrightarrow},\quad
B\rightarrow V^B\rightarrow U^B{\dashrightarrow}
\end{align*}
satisfying $U_B,U^B\in \U$ and $V_B,V^B\in \V$.
\end{itemize}
 Let $(\mathcal S,\T )$ and $(\U,\V)$ be cotorsion pairs on $\B$. Then
the pair $((\mathcal S,\T ),(\U,\V))$ is called a \emph{twin cotorsion pair} if it satisfies
$\EE(\mathcal S,\V) = 0$.
\end{defn}

\begin{defn}
A rigid subcategory $\C$ of $\B$ is called \emph{fully rigid} if
\begin{itemize}
\item[\rm (1)] it admits a cotorsion pair $(\C,\K)$ and $\C\neq\mathcal P$;

\item[\rm (2)]  any indecomposable object in $\B$ either belongs to $\mathcal K$ or belongs to $\h:=\CoCone(\C,\C)$.
 \end{itemize}
\end{defn}

From this definition, we know that cluster tilting subcategories are fully rigid. But fully rigid subcategories are not necessarily cluster tilting, see Example \ref{ex1} in Section 5.

\smallskip

In this article, we always assume $\C$ is a fully rigid subcategory of $\B$ which admits a twin cotorsion pair $((\C,\K),(\K,\D))$.

\begin{lem}\label{summand}
$\CoCone(\C,\C)$ and $\Cone(\D,\D)$ are closed under direct summands.
\end{lem}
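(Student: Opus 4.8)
The plan is to show that $\CoCone(\C,\C)$ is closed under direct summands; the statement for $\Cone(\D,\D)$ follows by the dual argument, using the twin cotorsion pair $((\C,\K),(\K,\D))$ and the dual of the structural properties of $\C$. So suppose $H \in \CoCone(\C,\C)$ decomposes as $H = H_1 \oplus H_2$ in $\B$, witnessed by an $\EE$-triangle $H \to C_1 \to C_2 \dashrightarrow$ with $C_1, C_2 \in \C$. First I would use that $\B$ is Krull--Schmidt, so it suffices to treat the case where $H_1$ is indecomposable (induct on the number of indecomposable summands); the goal is then to show $H_1 \in \CoCone(\C,\C)$.

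The key point is the defining property (2) of a fully rigid subcategory: every indecomposable object of $\B$ lies either in $\K$ or in $\h = \CoCone(\C,\C)$. So if $H_1 \notin \CoCone(\C,\C)$, then $H_1 \in \K$, and I would derive a contradiction. Here I would exploit $\EE(\C,\K)=0$ (part of the cotorsion pair $(\C,\K)$): applying $\EE(-, H_1)$ to no triangle directly, but rather applying $\B(-, -)$ and $\EE(-,-)$ along the triangle $H \to C_1 \to C_2 \dashrightarrow$. Since $H = H_1 \oplus H_2$, the inclusion $H_2 \hookrightarrow H$ and projection $H \twoheadrightarrow H_1$ are available; the plan is to split off the $H_1$-component. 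Concretely, because $\EE(C_2, K) = 0$ for all $K \in \K$ and in particular $\EE(C_2, H_1) = 0$, the $\EE$-extension $\delta \in \EE(C_2, H)$ realizing our triangle has its $H_1$-component equal to zero under the decomposition $\EE(C_2, H) = \EE(C_2, H_1) \oplus \EE(C_2, H_2)$. Then a standard additivity/realization argument (of the type recorded in Remark \ref{useful}) shows the triangle decomposes as a direct sum of $H_1 \xrightarrow{\ \id\ } H_1 \to 0 \dashrightarrow$ and $H_2 \to C_1 \to C_2 \dashrightarrow$; in particular $H_1$ is a direct summand of $C_1 \in \C$, so $H_1 \in \C$ (as $\C$ is closed under summands). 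But $\C \subseteq \CoCone(\C, \C)$ (via the triangle $H_1 \xrightarrow{(1\ 0)^{\mathrm t}} H_1 \oplus C' \to C' \dashrightarrow$ with split extension, for any $C' \in \C$, or more simply $H_1 \to H_1 \to 0$ is not quite of the required form, so one uses $H_1 \xrightarrow{\svecv{1}{0}} H_1\oplus 0 \to 0$ — here I should instead note $\C \subseteq \h$ since for $C \in \C$ we have the trivial $\EE$-triangle $C \to C \to 0 \dashrightarrow$, and $0 \in \C$), contradicting $H_1 \in \K$ unless $H_1 \in \C \cap \K$. So I must be slightly more careful: if $H_1 \in \C \cap \K$, then $H_1 \in \CoCone(\C,\C)$ already and we are done anyway. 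Thus in all cases $H_1 \in \CoCone(\C, \C)$.

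The main obstacle I anticipate is making precise the claim that the extension $\delta$ "splits off its $H_1$-component'' so that the $\EE$-triangle decomposes as a direct sum — this requires knowing that a direct-sum decomposition $H = H_1 \oplus H_2$ together with $\delta \in \EE(C_2, H_2) \subseteq \EE(C_2, H)$ (image under the inclusion) forces the realization $\mathfrak{s}(\delta)$ to be the direct sum of $[H_1 \xrightarrow{\sim} H_1 \to 0]$ with a realization of $\delta$ in $\EE(C_2, H_2)$. This is exactly the additivity axiom for $(\EE, \mathfrak{s})$ (every $\EE$-triangle is, up to the equivalence $\mathfrak{s}$ uses, determined additively), combined with the fact that $[H_1 \xrightarrow{\id} H_1 \to 0 \dashrightarrow 0]$ is the realization of the zero extension in $\EE(0, H_1)$. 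Once this bookkeeping is set up the rest is formal. Finally, for $\Cone(\D,\D)$ I would run the dual argument: an indecomposable summand of an object of $\Cone(\D,\D)$ either lies in $\K$ or — by the dual of property (2), which holds because $(\K,\D)$ is a cotorsion pair with the relevant ``co-fully-rigid'' behaviour — lies in $\Cone(\D,\D)$; the extension splits off because $\EE(K, \D) = 0$ gives $\EE(D', H_1) = 0$ on the relevant side, and we conclude as before.
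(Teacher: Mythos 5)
Your argument for the $\CoCone(\C,\C)$ half is essentially correct, but it is a genuinely different (and heavier) route than the paper's: you invoke the fully-rigid dichotomy plus $\EE(\C,\K)=0$ to see that, when the indecomposable summand $H_1$ lies in $\K$, the extension $\delta\in\EE(C_2,H_1\oplus H_2)$ equals $(i_2)_*(p_2)_*\delta$ and hence, by additivity of the realization, the triangle splits off $[H_1\xrightarrow{1}H_1\to 0]$, making $H_1$ a summand of $C_1\in\C$. The paper instead argues purely formally, with no rigidity and no dichotomy: the component $q_1\colon X_1\to C^1$ of the given inflation is itself an inflation (composite of the split inflation $\svecv{1}{0}$ with an inflation), one completes it to an $\EE$-triangle $X_1\to C^1\to C$, and Remark \ref{useful}(b) shows $C$ is a direct summand of $C^1\oplus C^2$, hence $C\in\C$. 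Both work for this half; the paper's version uses only that $\C$ is additive and closed under summands.

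The genuine gap is in your treatment of $\Cone(\D,\D)$. The ``dual of property (2)'' you appeal to --- every indecomposable object lies in $\K$ or in $\Cone(\D,\D)$ --- is not part of the definition of fully rigid (which is not self-dual) and is not available at this stage: in the paper this statement is essentially Lemma \ref{heart}, whose proof uses precisely the $\Cone(\D,\D)$ half of Lemma \ref{summand}, so invoking it here is circular. Nor does your splitting trick cover the missing case: for an $\EE$-triangle $D^1\to D^2\to H_1\oplus H_2$ the relevant component of the extension lies in $\EE(H_1,D^1)$, and its vanishing is only guaranteed when $H_1\in\K$ (via $\EE(\K,\D)=0$); when the indecomposable summand $H_1$ lies in $\h_\C$ nothing forces it to vanish, and the argument stalls. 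The repair is to dualize the formal argument above rather than your rigidity argument: the composite $D^2\to H_1\oplus H_2\to H_1$ of the given deflation with the projection is again a deflation, complete it to an $\EE$-triangle $D\to D^2\to H_1$, compare it with the given triangle as in the paper's first half, and the $ht=1_C$ case of Remark \ref{useful}(b) shows $D$ is a direct summand of $D^1\oplus D^2$, hence $D\in\D$ and $H_1\in\Cone(\D,\D)$, with no case distinction at all.
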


\begin{proof}
We only show that $\CoCone(\C,\C)$ is closed under direct summands, the other half is by dual.\\
Let $X_1\oplus X_2\in \CoCone(\C,\C)$. It admits an $\EE$-triangle $\xymatrix{X_1\oplus X_2 \ar[r]^-{\svech{q_1}{q_2}} &C^1 \ar[r]^{p} &C^2 \ar@{-->}[r] &}$ where $C^1,C^2\in \C$. Since $q_1=(q_1~q_2)\binom{1}{0}$, we have that $q_1$ is an inflation. So $q_1$ admits an $\EE$-triangle $\xymatrix{X_1 \ar[r]^{q_1} &C^1 \ar[r]^{p_1} &C \ar@{-->}[r] &}$. Thus we get the following commutative diagram:
$$\xymatrix{
X_1 \ar[d]_-{\svecv{1}{0}} \ar[r]^{q_1} &C^1 \ar@{=}[d] \ar[r]^{p_1} &C \ar[d] \ar@{-->}[r] & \\
X_1\oplus X_2 \ar[r]^-{\svech{q_1}{q_2}} \ar[d]_-{\svech{1}{0}} &C^1 \ar[r]^p \ar[d]^a &C^2 \ar[d] \ar@{-->}[r] &\\
X_1 \ar[r]^{q_1} &C^1 \ar[r]^{p_1} &C \ar@{-->}[r] &.
}
$$
Then $C$ is a direct summand of $C^1\oplus C^2\in \C$. Since $\C$ is closed under direct summands, we have $X_1\in \CoCone(\C,\C)$. Hence $\CoCone(\C,\C)$ is closed under direct summands.
\end{proof}

For objects $A,B\in\B$ and a subcategory $\B'$ of $\B$, let $[\B'](A,B)$ be the subgroup of $\Hom_{\B}(A,B)$ consisting of morphisms which factor through objects in $\B'$. For a morphism $x:A\to X$ (or $x:X\to B$), let $[x](A,B)$ be the subgroup of $\Hom_{\B}(A,B)$ consisting of morphisms which factor through $x$, let $[\B',x](A,B)$ be the subgroup of $\Hom_{\B}(A,B)$ consisting of morphisms which factor through $\B'$ and the morphism $x$.  For another morphism $x':A\to X$ (or $x':X\to B$),  let $[x,x'](A,B)$ be the subgroup of $\Hom_{\B}(A,B)$ consisting of morphisms which factor through morphism $x$ and $x'$.
\medskip

We denote $\B'/\mathcal P$ by $\underline \B'$ if $\mathcal P\subseteq \B'\subseteq \B$. For any morphism $f\colon A\to B$ in $\B$, we denote by $\underline{f}$ the image of $f$ under the natural quotient functor $\B\to \uB$. We denote $\B'/\K$ by $\overline {\B'}$ if $\K\subseteq \B'\subseteq \B$. For any morphism $g\colon A\to B$ in $\B$, we denote by $\overline{g}$ the image of $g$ under the natural quotient functor $\B\to \oB$.

\begin{defn}\cite[Definition 3.3]{LN}
Let $(\B,\EE,\mathfrak{s})$ be an extriangulated category and $\A$ be an
abelian category. An additive functor $H\colon \B\to \A$ is called cohomological, if any
$\EE$-triangle $$\xymatrix{A\ar[r]^f &B \ar[r]^g &C \ar@{-->}[r]&}$$
yields an exact sequence
$$H(A)\xrightarrow{H(f)} H(B)\xrightarrow{H(g)} H(C)$$ in $\A$.
\end{defn}

The following property was shown in \cite{LN}, which is very useful in this article.

\begin{prop}
The natural quotient functor $\pi:\B\to \oB$ is a cohomological functor.
\end{prop}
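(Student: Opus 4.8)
The statement to prove is the \emph{Proposition} that the natural quotient functor $\pi\colon\B\to\oB=\B/\K$ is a cohomological functor. The plan is to fix an $\EE$-triangle $\xymatrix{A\ar[r]^f &B \ar[r]^g &C \ar@{-->}[r]^{\delta}&}$ and to verify exactness of $\oB(A)\xrightarrow{\pi(f)}\oB(B)\xrightarrow{\pi(g)}\oB(C)$ at the middle term $\oB(B)$; since $\oB$ is additive (as a subquotient of $\B$ modulo an ideal) and $\pi$ is additive, this single diagram is what ``cohomological'' asks for. Composition $\pi(g)\pi(f)=\pi(gf)$ vanishes because $gf$ factors through $B$—more to the point, by the axioms of an extriangulated category $gf=0$ already in $\B$, hence certainly $\overline{gf}=0$. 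So the content is the reverse inclusion: if $\overline{b}\colon \overline{B}\to\overline{C}$ satisfies $\overline{g}\,\overline{b}=0$—wait, let me restate with the variables in the right places: we take $h\colon X\to B$ with $\overline{g h}=0$, i.e.\ $gh$ factors through some object $K\in\K$, and we must produce $u\colon X\to A$ with $\overline{h}=\overline{fu}$, i.e.\ $h-fu$ factors through $\K$.

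The main step is the following. Write $gh = \beta\alpha$ with $\alpha\colon X\to K$, $\beta\colon K\to C$, and $K\in\K$. Because $(\C,\K)$ is a cotorsion pair we have $\EE(\C,\K)=0$; but here I want instead to use the \emph{$\EE$-triangle} side of the cotorsion pair to replace $K$ by something that maps compatibly to $C$. Concretely: apply Remark~\ref{useful}(c) (or just the long exact sequence in the first variable obtained from the $\EE$-triangle) to pull back $\delta\in\EE(C,A)$ along $\beta\colon K\to C$. Since $K\in\K$ and $\delta$ lives in $\EE(C,A)$ with $A=\operatorname{CoCone}$ of $g$—hmm, that is not automatically in the relevant Ext group; the cleaner route is: the $\EE$-triangle $A\xrightarrow{f}B\xrightarrow{g}C\dashrightarrow\delta$ induces, for any $X$, an exact sequence $\B(X,A)\xrightarrow{f_\ast}\B(X,B)\xrightarrow{g_\ast}\B(X,C)\xrightarrow{\delta_\sharp}\EE(X,A)$, and likewise it induces on the quotient the sequence we want \emph{provided} we can show that any $h$ with $gh$ factoring through $\K$ can be corrected, modulo $\mathrm{im}\,f$, to an $h'$ with $g h'=0$ outright. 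To get that correction, use the defining $\EE$-triangle $V_C\to U_C\to C\dashrightarrow$ with $U_C\in\C$, $V_C\in\K$ coming from the cotorsion pair $(\C,\K)$—no: the cleaner one is the second $\EE$-triangle from the cotorsion pair $(\K,\D)$ or the first from $(\C,\K)$ applied to $C$. Actually the simplest approach avoids picking the wrong resolution: factor $gh$ through $K$, lift $\alpha\colon X\to K$ is not liftable in general, but $\beta\colon K\to C$ composed with $g$—I'd use that $\K$ is the \emph{co}-part, so $\EE$-triangles of the form $K\to C'\to C''\dashrightarrow$ with $C',C''\in\C$ are available when $C\in\h$; and for $C\in\K$ the argument collapses since then $\overline{C}=0$. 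This case split (decompose $C$ via Krull--Schmidt into a $\K$-part and an $\h$-part using the fully rigid hypothesis) is the technical heart.

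So the skeleton is: (i) reduce to $C$ indecomposable by additivity and Krull--Schmidt; (ii) if $C\in\K$, then $\overline{C}=0$ and exactness is trivial; (iii) if $C\in\h$, pick an $\EE$-triangle $C\to C_1\to C_2\dashrightarrow$ with $C_1,C_2\in\C$ and use it together with $\EE(\C,\K)=0$ and Remark~\ref{useful}(a),(c) to manipulate the factorization $gh=\beta\alpha$ through $K\in\K$ into the desired $h-fu\in[\K](X,B)$; the rotation/octahedron axiom and the long exact $\B(X,-)$-sequence do the rest. The step I expect to be the genuine obstacle is (iii): turning an arbitrary factorization of $gh$ through an object of $\K$ into a factorization through the image of $f$ modulo $\K$, i.e.\ proving that the connecting map $\delta_\sharp\colon\B(X,C)\to\EE(X,A)$ kills the image of $[\K](X,C)$ after passing to $\oB$. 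I would handle this by pulling $\delta$ back along $\beta\colon K\to C$ and observing the pulled-back extension in $\EE(K,A)$ is split or at least factors appropriately because $K\in\K$ interacts trivially with the $\C$-side of the twin cotorsion pair, so that $\alpha$ lifts along $B\to C$ after modification by something through $\K$. The bookkeeping is routine once the right $\EE$-triangles are in hand; the paper presumably does exactly this in a few lines, citing \cite{LN}.
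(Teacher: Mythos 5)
Your reduction of ``cohomological'' to a lifting property is where the argument breaks. Exactness of $\overline{A}\xrightarrow{\overline{f}}\overline{B}\xrightarrow{\overline{g}}\overline{C}$ at $\overline{B}$ in the abelian category $\oB$ means $\Ker\overline{g}=\Im\overline{f}$ as subobjects of $\overline{B}$; what you set out to prove instead --- that \emph{every} $h\colon X\to B$ in $\B$ with $gh\in[\K](X,C)$ satisfies $h-fu\in[\K](X,B)$ for some $u\colon X\to A$ --- says that $\overline{f}$ is a weak kernel of $\overline{g}$, which is strictly stronger and is false in the paper's setting. Concretely, let $\B$ be the cluster category of type $A_2$ and $\C=\K=\D=\add(P_1\oplus P_2)$ (cluster tilting, hence fully rigid with the required twin cotorsion pair). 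Rotating the AR-triangle $S_2\to P_1[1]\to P_2[1]\to S_2[1]$ gives the triangle $P_1[1]\xrightarrow{f}P_2[1]\xrightarrow{g}S_2[1]$, and $S_2[1]\cong P_1\in\K$, so $\overline{g}=0$ and $h=\mathrm{id}_{P_2[1]}$ satisfies your hypothesis. But under $\oB\simeq\mod kA_2$ the objects $\overline{P_1[1]}$ and $\overline{P_2[1]}$ are the indecomposable projective of length two and its simple top, and $\overline{f}\neq 0$ is the canonical non-split epimorphism; hence $\mathrm{id}=\overline{f}\,\overline{u}$ is impossible. The sequence is nonetheless exact at $\overline{P_2[1]}$ (an epimorphism followed by zero), so the Proposition is untouched --- it is your criterion that is too strong: whenever $C\in\K$ it would force $\overline{f}$ to be a split epimorphism. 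Exactness requires handling the kernel subobject of $\overline{g}$ \emph{inside} $\oB$, not arbitrary test morphisms lifted to $\B$.

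Independently of this, the step you yourself flag as the genuine obstacle is never carried out, and the mechanism you propose for it does not work: pulling $\delta$ back along $\beta\colon K\to C$ produces a class in $\EE(K,A)$, and nothing in the hypotheses makes it split --- the available vanishings are $\EE(\C,\K)=\EE(\K,\D)=\EE(\C,\D)=0$, none of which applies to $\EE(\K,A)$ for arbitrary $A$. Note finally that the paper does not prove this Proposition in-house: it quotes it from Liu--Nakaoka \cite{LN}, where it comes out of the heart machinery for the twin cotorsion pair $((\C,\K),(\K,\D))$ (in this degenerate case the heart is $\oB$ and the associated half-exact functor coincides with $\pi$); that argument constructs kernels and cokernels inside $\oB$ via the cotorsion-pair approximation triangles, which is exactly the content your sketch defers as ``routine bookkeeping.''
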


Let $\W$ and $\R$ be subcategories of $\B$. We denote
$$\W_\R=\{W\in \W \text{ }|\text{ }W \text{ has no non-zero direct summands in }\R \}.$$

\begin{lem}\label{heart}
If $X\in \h_\C$, then $X\in \Cone(\D,\D)$.
\end{lem}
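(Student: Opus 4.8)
The plan is to unwind the definition of $\h_{\C}$ and push a given object $X$ through the structure provided by the twin cotorsion pair $((\C,\K),(\K,\D))$. Recall $X\in\h=\CoCone(\C,\C)$ means there is an $\EE$-triangle $X\to C_1\to C_2\dashrightarrow$ with $C_1,C_2\in\C$, and $X\in\h_{\C}$ means moreover that $X$ has no nonzero direct summand in $\C$. Since $(\K,\D)$ is a cotorsion pair, the second $\EE$-triangle in the cotorsion pair axiom applied to $X$ gives $X\to D^X\to K^X\dashrightarrow$ with $D^X\in\D$ and $K^X\in\K$. The goal is to show that this second triangle already realizes $X$ as an object of $\Cone(\D,\D)$, i.e.\ that $K^X\in\D$ (equivalently $K^X\in\K\cap\D$, and such objects should land in $\D$); but more directly I expect to show $K^X\in\D$ by building a comparison between the $\C$-$\C$ triangle and the $\D$-$K$ triangle.

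First I would exploit that $\C\subseteq\K$-orthogonality gives $\EE(\C,\K)=0$ but also, since $(\C,\K)$ is a cotorsion pair, every object of $\C$ has the resolution property, and that $\h=\CoCone(\C,\C)$ interacts with $\K$: an object in $\h$ that also lies in $\K$ must be $0$ or at least decompose, using rigidity of $\C$. Concretely, from $X\to C_1\to C_2\dashrightarrow$ apply $\Hom(-,\K)$ (using that $\pi\colon\B\to\oB$ is cohomological, or directly the long exact sequence of an $\EE$-triangle): since $\EE(C_i,\K)=0$ we learn something about $\EE(X,\K)$. In fact I expect $\EE(X,\K)=0$, which says $X\in{}^{\perp_1}\K$; combined with the cotorsion pair $(\C,\K)$ this does not immediately give $X\in\C$, but it does control the triangle $X\to D^X\to K^X\dashrightarrow$: applying $\Hom(-,\K)$ to it and using $\EE(X,\K)=0$ together with $\EE(D^X,\K)=0$ (as $\D$ and $\C$-side... careful here) pins down $\EE(K^X,\K)$, and since $K^X\in\K$ this forces $K^X$ to be rigid-orthogonal to itself in a way that, via the twin cotorsion pair condition $\EE(\C,\D)=0$ wait — $\EE(\s,\V)=0$ here reads $\EE(\C,\D)=0$ — lets me conclude $K^X\in\D$.

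The cleanest route, and the one I would actually carry out, is: take the cotorsion-pair $\EE$-triangle $X\xrightarrow{f} D^X\xrightarrow{g} K^X\dashrightarrow$ with $D^X\in\D$, $K^X\in\K$; I claim $K^X\in\D$, so that this triangle exhibits $X\in\Cone(\D,\D)$ (reading it as $X\to D^X\to K^X$ is the wrong direction — rather I should use the \emph{other} cotorsion-pair triangle $\K_X\to \D_X\to X\dashrightarrow$, no: $(\K,\D)$ gives $\D_X\to\K_X\to X$ and $X\to\D^X\to\K^X$ with the first factor in the contravariant class $\K$ and second in $\D$). Let me instead use $D'\to D\to X\dashrightarrow$ type: since $X\in\h_\C$ we have $X\notin\K$ in an essential way, and the approximation triangle for the cotorsion pair $(\K,\D)$ of the form $\D\text{-object}\to\K\text{-object}\to X$ must have its $\K$-term decompose — the key step is to show the $\K$-term is actually a $\D$-object too, using that $X$, living in $\CoCone(\C,\C)$ and having no $\C$-summands, forces the $\K$-component of any such triangle to be killed. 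I would prove this by the octahedral-type axiom (Remark \ref{useful}(c)): form the triangle $X\to C_1\to C_2\dashrightarrow$ and the triangle $K_X\to\D_X\to X\dashrightarrow$ (from $(\K,\D)$-resolution, so $\D_X\in\D$ wait the middle should be in $\K$)...

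Given the notational subtleties, the honest plan is: use Remark \ref{useful}(c) to splice the defining $\EE$-triangle $X\to C_1\to C_2\dashrightarrow$ of $X\in\h_\C$ with a $(\K,\D)$-approximation $\EE$-triangle for $X$, obtaining a commutative diagram whose rows and columns are $\EE$-triangles; then read off that the relevant $\K$-term is a direct summand of an object built from $C_1,C_2\in\C$ and a $\D$-term, use $\EE(\C,\D)=0$ and rigidity of $\C$ to split it, and finally invoke that $X$ has no nonzero $\C$-summand (together with Lemma \ref{summand} that $\Cone(\D,\D)$ is closed under summands) to cancel the $\C$-part and land $X\in\Cone(\D,\D)$. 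The main obstacle I anticipate is bookkeeping: correctly orienting the two cotorsion-pair $\EE$-triangles for $(\K,\D)$ and correctly applying the $3\times 3$ / octahedral construction of Remark \ref{useful}(c) so that the $\K$-terms that appear can be shown to be $\D$-terms; once the diagram is set up, the vanishing $\EE(\C,\D)=0$ from the twin cotorsion pair and the rigidity of $\C$ should make the splitting arguments routine.
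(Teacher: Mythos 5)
Your proposal does not arrive at a proof: it cycles through several candidate constructions without completing any, and the concrete claims it does make are problematic. The expectation $\EE(X,\K)=0$ does not follow from $X\in\CoCone(\C,\C)$ (the exact sequence obtained by applying $\Hom(-,K)$ to $X\to C_1\to C_2$ ends at $\EE(C_1,K)\to\EE(X,K)$, there being no $\EE^2$-term), and it is false in general: if $\C$ is cluster tilting then $\K=\C$ and $\EE(X,\C)=0$ would force $X\in\C$, contradicting $X\in\h_\C$. The ``cleanest route'' via $X\to D^X\to K^X$ is, as you notice yourself, oriented the wrong way: even if $K^X$ lay in $\D$ this would only show $X\in\CoCone(\D,\D)$. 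Finally, the plan you settle on --- splice the defining triangle $X\to C_1\to C_2$ with a $(\K,\D)$-approximation of $X$ --- does not produce what is needed: pushing $X\to C_1\to C_2$ out along $X\to D^X$ (Remark \ref{useful}(c)) only yields triangles $D^X\to E\to C_2$ (which splits since $\EE(\C,\D)=0$) and $X\to D^X\oplus C_1\to E$, so no conflation of the form $\D\to\D\to X$ appears; and the other approximation $D_X\to K_X\to X$ cannot be spliced with $X\to C_1$ by the (ET4)-type axiom, since one of the two morphisms to be composed is a deflation and the other an inflation.

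The missing idea is to resolve the $\K$-term itself by $\D$ and never touch the $\C$-$\C$ triangle. The paper takes the $(\K,\D)$-triangles $D_X\to K_X\to X$ and $K_X\to D\to K$ (with $D_X,D\in\D$, $K_X,K\in\K$) and applies the (ET4)-type axiom to the composable inflations $D_X\to K_X\to D$. This produces an object $Y$ with $\EE$-triangles $D_X\to D\to Y$, so $Y\in\Cone(\D,\D)$ by construction, and $X\xrightarrow{x}Y\to K$. Applying the cohomological quotient functor $\pi\colon\B\to\oB$ kills $K$ and $K_X$, so $\overline{x}$ is an isomorphism in $\oB$; since $X\in\h_\C$ may be taken indecomposable and does not lie in $\K$ (recall $\h\cap\K=\C$), the Krull--Schmidt property forces $x$ to be a section, hence $X$ is a direct summand of $Y$, and Lemma \ref{summand} concludes $X\in\Cone(\D,\D)$. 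You correctly anticipated the closing moves (closure of $\Cone(\D,\D)$ under summands and the use of $X$ having no summand in $\K$), but the central construction --- comparing $X$ with the cone of $D_X\to D$ obtained from a $\D$-approximation of $K_X$ --- is absent, so the argument as proposed has a genuine gap.
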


\begin{proof}
Let $X\in \h_\C$ be an indecomposable object.  It admits the following commutative diagram
$$\xymatrix{
D_X \ar[r] \ar@{=}[d] &K_X \ar[r] \ar[d] &X \ar[d]^x \ar@{-->}[r] &\\
D_X \ar[r] &D \ar[r] \ar[d] &Y \ar[d] \ar@{-->}[r] &\\
&K \ar@{-->}[d] \ar@{=}[r] &K \ar@{-->}[d]\\
& &
}
$$
where $D_X,D\in \D$ and $K_X,K\in \K$. Then $Y\in \Cone(\D,\D)$, by applying $\pi$ to this diagram, we get an isomorphism $X\xrightarrow{\overline x} Y$. Then $x$ is a section and $X$ is a direct summand of $Y$, by Lemma \ref{summand}, $X\in \Cone(\D,\D)$.
\end{proof}

\begin{prop}
We have the following properties:
\begin{itemize}
\item[(a)] $\oB\simeq \h/\C\simeq \mod\underline {\Omega \C}\simeq \mod\underline \C$;
\item[(b)] the subcategory $\underline {\Omega \C}$ of $\oB$ is the enough projectives;
\item[(c)] the subcategory $(\Sigma \D)/\mathcal I$ of $\oB$ is the enough injectives;
\item[(d)] $\B_\K=\h_\C$.
\end{itemize}
\end{prop}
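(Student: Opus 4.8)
The plan is to run everything through the identification $\oB\simeq\h/\C$ and then recognise this quotient as a finitely presented module category; items (b), (c), (d) then drop out. I will use repeatedly the following consequences of the cotorsion pair $(\C,\K)$ and the rigidity of $\C$: $\mathcal P\subseteq\C\subseteq\K=\C^{\bot_1}$ (the first since $\EE(\mathcal P,-)=0$, the last since a splitting argument on the approximation $\EE$-triangles identifies $\C^{\bot_1}$ with $\K$); $\C\subseteq\h$ and $\Omega\C\subseteq\h$ (the $\EE$-triangles $C\to C\oplus C'\to C'$ and $\Omega C\to P_C\to C$ have both outer terms in $\C$); and $\oB=\B/\K$ is the abelian category of the Remark. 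I would dispose of (d) first. If $X\in\h_\C$ is indecomposable, its defining $\EE$-triangle $X\to C^0\to C^1\dashrightarrow\theta$ has $\theta\in\EE(C^1,X)$; were $X\in\K$, then $\EE(\C,\K)=0$ forces $\theta=0$, the triangle splits, and $X$ becomes a summand of $C^0\in\C$, contradicting $X\notin\C$; so $X\notin\K$, whence $\h_\C\subseteq\B_\K$ (using Lemma \ref{summand} to keep summands inside $\h$). Conversely, an indecomposable summand of an object of $\B_\K$ is not in $\K$, hence lies in $\h$ by the fully rigid condition, and cannot lie in $\C\subseteq\K$; so $\B_\K\subseteq\h_\C$.

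Next I would establish $\oB\simeq\h/\C$ by checking that $\h\hookrightarrow\B\xrightarrow{\pi}\oB$ is dense, full, and has kernel exactly $[\C]$ on morphisms. Density follows from the fully rigid condition together with Krull--Schmidt, since any object decomposes as $H\oplus L$ with $H\in\h$ and $L\in\K$ and $\pi L=0$; fullness is automatic for a quotient functor. For the kernel, if $f\colon X\to Y$ with $X,Y\in\h$ factors through some $K\in\K$, apply $\Hom(-,K)$ to $X\xrightarrow{x}C^0\to C^1\dashrightarrow$ and use $\EE(C^1,K)=0$ to see that $X\to K$ factors through $x$; hence $f$ factors through $C^0\in\C$, so $f\in[\C](X,Y)$, and the reverse inclusion is clear from $\C\subseteq\K$. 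Separately I would record the syzygy computation: applying $\Hom(-,X)$ to $\Omega C\to P_C\to C\dashrightarrow\epsilon$ together with $\EE(\mathcal P,-)=0$ gives $\uHom(\Omega C,X)\cong\EE(C,X)$, naturally in $X$; moreover any morphism $\Omega C\to X$ that factors through $\K$ already factors through $P_C$ (because $\Hom(P_C,K)\to\Hom(\Omega C,K)$ is onto, its cokernel sitting inside $\EE(C,K)=0$), so $\Hom_{\oB}(\pi\Omega C,\pi X)=\uHom(\Omega C,X)\cong\EE(C,X)$. Feeding $X=\Omega C'$ into this and using $\EE(C,P_{C'})=0$ (rigidity of $\C$, as $\mathcal P\subseteq\C$) yields $\uHom(\Omega C,\Omega C')\cong\uHom(C,C')$, so the syzygy functor $\Omega\colon\uC\to\underline{\Omega\C}$ is fully faithful; it is dense by construction, hence an equivalence, and therefore $\mod\underline{\Omega\C}\simeq\mod\uC$.

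The heart of the argument is the equivalence $\Phi\colon\oB\simeq\h/\C\xrightarrow{\ \sim\ }\mod\uC$, $\pi X\mapsto\EE(-,X)|_{\uC}$. This is well defined and lands in $\mod\uC$ because the $\EE$-triangle $X\to C^0\to C^1\dashrightarrow$ of $X\in\h$ gives, via $\Hom(-,C)$ and $\EE(C,C^0)=0$, a presentation $\uHom(-,C^0)\to\uHom(-,C^1)\to\EE(-,X)\to0$. Faithfulness: if $\EE(C^1,f)$ kills the class $\theta$ of the triangle, then $f$ factors through $X\to C^0$ by the $\Hom(-,Y)$ long exact sequence, so $f\in[\C](X,Y)$. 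Fullness: a morphism of presentations lifts to a compatible pair $C^i_X\to C^i_Y$, which by the extriangulated realization axioms (Remark \ref{useful} and its dual) produces a morphism $X\to Y$ inducing it. Essential surjectivity: a finitely presented $\uC$-module is $\Coker(\uHom(-,C^0)\to\uHom(-,C^1))$ for a map $C^0\to C^1$ in $\C$, which after replacing $C^0$ by $C^0\oplus P$ (with $P\in\mathcal P$) may be taken to be a deflation, and then $X:=\CoCone(C^0\to C^1)\in\h$ satisfies $\EE(-,X)|_{\uC}\cong M$. This proves (a). For (b): under $\Phi$, $\pi\Omega C$ corresponds to the representable $\uHom(-,C)$, hence is projective, and every projective of $\mod\uC$ is a summand of a finite sum of these, so $\underline{\Omega\C}$ is exactly the subcategory of projectives; that there are enough of them follows since for $X\in\h$ one writes the class of $X\to C^0\to C^1\dashrightarrow$ as $f_\ast\epsilon$ for some $f\colon\Omega C^1\to X$ (using $\uHom(\Omega C^1,X)\cong\EE(C^1,X)$), and Remark \ref{useful}(c) applied to $\Omega C^1\to P_{C^1}\to C^1\dashrightarrow\epsilon$ and $f$ yields an $\EE$-triangle $\Omega C^1\to X\oplus P_{C^1}\to C^0\dashrightarrow$; applying the cohomological functor $\pi$ and using $\pi P_{C^1}=\pi C^0=0$ shows $\pi f\colon\pi\Omega C^1\twoheadrightarrow\pi X$.

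Finally, (c) is the dual of (a)--(b) applied to the cotorsion pair $(\K,\D)$ in place of $(\C,\K)$. The dual hypotheses hold: $\I\subseteq\D=\K^{\bot_1}$ is clear, Lemma \ref{heart} together with (d) supplies the dual of the fully rigid condition, and $\EE(\D,\D)=0$ because an indecomposable $D\in\D$ lies either in $\K$, whence $\EE(D,\D)\subseteq\EE(\K,\D)=0$, or in $\C$ (by the splitting argument of (d), since $\EE(\C,\D)=0$), whence $\EE(D,\D)\subseteq\EE(\C,\D)=0$. Running the previous two paragraphs with $\D$, $\Sigma$, $\I$ in the roles of $\C$, $\Omega$, $\mathcal P$ gives $\oB^{\op}\simeq\mod\underline{\D}$ and identifies $(\Sigma\D)/\I$ as the injectives of $\oB$. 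I expect the main obstacle to be the equivalence $\Phi$ of paragraph three --- specifically its fullness and essential surjectivity, where maps between $\uC$-module presentations must be lifted to morphisms of $\EE$-triangles in $\B$; once $\oB\simeq\mod\uC$ is in hand, the recognition of projectives and injectives and the syzygy equivalence $\uC\simeq\underline{\Omega\C}$ are bookkeeping driven by the long exact sequences of the two approximating $\EE$-triangles and the rigidity of $\C$.
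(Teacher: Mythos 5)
Your argument is correct, but it takes a more self-contained route than the paper. The paper disposes of (a)--(c) essentially by citation: after observing that $\oB$ and $\h/\C$ have the same objects and that $[\C]=[\K]$ on $\h$, it invokes \cite[Theorem 1.2]{LZ} and \cite[Theorem 3.4]{ZZ2} for the equivalences with $\mod\underline{\Omega\C}$ and $\mod\uC$, and \cite[Theorem 4.10]{LN} (respectively its dual together with Lemma \ref{heart}) for the projectives and injectives; only (d) and the identification $\oB\simeq\h/\C$ are argued directly, and there your splitting argument matches the paper's. You instead reconstruct the cited results: you build $\h/\C\simeq\mod\uC$ by hand via $X\mapsto\EE(-,X)|_{\uC}$, get $\uC\simeq\underline{\Omega\C}$ from the isomorphisms $\uHom(\Omega C,X)\cong\EE(C,X)$, and obtain (b) and (c) by identifying representables and dualizing along the second cotorsion pair $(\K,\D)$, correctly supplying the dual hypotheses ($\I\subseteq\D$, rigidity of $\D$ from the twin condition $\EE(\C,\D)=0$, and the dual fully rigid condition from Lemma \ref{heart} plus (d)). What the paper's route buys is brevity and consistency with the machinery it quotes elsewhere; what yours buys is independence from \cite{LZ}, \cite{ZZ2}, \cite{LN} and explicit possession of the isomorphisms ($\Hom_{\oB}(\pi\Omega C,\pi X)\cong\EE(C,X)$, the epimorphism $\pi\Omega C^1\twoheadrightarrow\pi X$ obtained from Remark \ref{useful}(c)) that the paper reuses in Propositions \ref{imp2} and \ref{imp} and Corollary \ref{ses}. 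Two small cautions: the identification $\uHom(\Omega C,X)\cong\EE(C,X)$ fails for a general syzygy and is rescued here precisely by your parenthetical observation that any map out of $\Omega C$ factoring through $\K$ already factors through $P_C$ (so keep that step explicit, as the paper does in Proposition \ref{imp}); and in your final paragraph the dual module category should be formed from $\D$ modulo maps factoring through $\I$ (with the appropriate opposite category), not from $\underline{\D}=\D/[\mathcal P]$ --- a notational slip that does not affect the argument.
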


\begin{proof}
(a) By the definition of a fully rigid subcategory, $\oB$ and $\h/\C$ actually have the same objects. If $A,B\in \h$, then $[\C](A,B)=[\K](A,B)$. Hence $\oB\simeq \h/\C$.

The equivalence $\h/\C\simeq \mod\underline {\Omega \C}$ is proved by \cite[Theorem 1.2]{LZ}.

The equivalence $\h/\C\simeq \mod\underline {\C}$ is proved by \cite[Theorem 3.4]{ZZ2}.

Hence we have $\oB\simeq \h/\C\simeq \mod\underline {\Omega \C}\simeq \mod\underline \C$.
\smallskip

(b) Note that a morphism $f$ in $\Omega \C$ factors through $\K$ if and only if it factors through $\mathcal P$. Since $\oB\simeq \h/\C$ and $\underline {\Omega \C}$ is the enough projectives in $\h/\C$ by \cite[Theorem 4.10]{LN}, we get that the subcategory $\underline {\Omega \C}$ of $\oB$ is the enough projectives.
\smallskip

(c) This is followed by Lemma \ref{heart} and the dual of \cite[Theorem 4.10]{LN}.
\smallskip

(d) Since $\h\cap \K=\C$, by the definition of a fully rigid subcategory, we have $\B_\K=\h_\C$.
\end{proof}

\subsection{Relative rigid subcategories}

We denote by $\pd_{\B}(X)$ the projective dimension of an object $X$ in $\B$ and by $\pd_{\oB}(X)$ (resp. $\id_{\oB}(X)$) the projective (resp. injective) dimension of $X$ in $\oB$.

For an object $X\in \h_\C$, we can always get the following commutative diagram
$$\xymatrix{
\Omega X \ar[r]^{a} \ar[d]^{a'} &\Omega C^1 \ar@{=}[r] \ar[d]^b &\Omega C^1\ar[d]^{p_1}\\
P_X \ar[r]^{b'} \ar[d] &\Omega C^2 \ar[r]^{p_2} \ar[d]^c &P\ar[r]^{q_2} \ar[d]^{q_1} &C^2 \ar@{=}[d] \ar@{-->}[r] & \ar@{}[d]^-{(\maltese)}\\
X \ar@{=}[r] \ar@{-->}[d] &X \ar[r]^x \ar@{-->}[d] &C^1 \ar@{-->}[d] \ar[r]^d &C^2 \ar@{-->}[r] &\\
& & &
}$$
where $C^1,C^2\in \C$, $P,P_X\in \mathcal P$. Then we have the following proposition.

\begin{prop}\label{imp2}
Let $X\in \h_\C$. Then
$\pd_{\oB}(X)\leq 1$ if and only if in the diagram $(\maltese)$, the morphism $a:\Omega X\to \Omega C^1$ factors through $\K$. 
\end{prop}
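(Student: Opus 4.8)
The plan is to analyze the long commutative diagram $(\maltese)$ using the cohomological functor $\pi\colon\B\to\oB$ and the computation $\oB\simeq\mod\underline{\C}\simeq\mod\underline{\Omega\C}$. First I would extract from $(\maltese)$ a projective presentation of $\overline{X}$ in $\oB$. Indeed, applying $\pi$ to the bottom two rows, the $\EE$-triangle $X\overset{x}{\to}C^1\overset{d}{\to}C^2\dashrightarrow$ is sent to an exact sequence, and since $\overline{C^1}=\overline{C^2}=0$ in $\oB=\h/\C$, this merely recovers $\overline{X}$; the useful presentation comes from the middle column together with the left column. Reading the vertical $\EE$-triangles $\Omega X\to P_X\to X\dashrightarrow$, $\Omega C^1\to P\to C^1\dashrightarrow$ and $\Omega C^2\to P\to C^2\dashrightarrow$, and using that $\pi$ kills projectives, one obtains that in $\oB$ the object $\overline{X}$ has a projective presentation $\overline{\Omega C^1}\overset{\overline{a}'}{\longrightarrow}\overline{P_X}\to\cdots$ — more precisely, chasing the diagram shows the exact sequence in $\oB$
$$\overline{\Omega C^1}\xrightarrow{\ \overline{b}\ }\overline{\Omega C^2}\xrightarrow{\ \overline{c}\ }\overline{X}\to 0$$
with $\overline{\Omega C^1},\overline{\Omega C^2}\in\underline{\Omega\C}$ projective in $\oB$, and the kernel of $\overline{b}$ is exactly the image of $\overline{a}\colon\overline{\Omega X}\to\overline{\Omega C^1}$.

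Next I would identify $\pd_{\oB}(\overline X)\le 1$ with the statement that this presentation already exhibits the kernel correctly, i.e. that $\Ker\overline{b}=0$ in $\mod\underline{\Omega\C}$, equivalently that $\overline{b}\colon\overline{\Omega C^1}\to\overline{\Omega C^2}$ is a monomorphism in $\oB$. Here I would use that $\underline{\Omega\C}$ is the subcategory of projectives and that $\oB$ is abelian: $\pd_{\oB}(\overline X)\le 1$ holds precisely when the syzygy of $\overline X$ (the image of $\overline{a}'\colon\overline{P_X}\to\overline{P_X}$, or rather the kernel $\overline{\Omega C^1}\to\overline{\Omega C^2}$ in the presentation above) is projective, and by Yoneda/evaluation on $\underline{\Omega\C}$ one checks that this happens iff $\overline{b}$ is already injective, i.e. iff $\overline{a}$ is the zero morphism in $\oB$. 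The final translation step is the key point: $\overline{a}=0$ in $\oB=\B/\K$ means precisely that $a\colon\Omega X\to\Omega C^1$ factors through $\K$. So the logical chain is $\pd_{\oB}(\overline X)\le 1\iff\overline{b}$ mono $\iff\overline{a}=0\iff a$ factors through $\K$, and the forward and backward implications of each equivalence must be verified against the diagram.

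The main obstacle I anticipate is the middle equivalence, $\overline{b}\text{ mono}\iff\overline{a}=0$: the implication "$\overline a=0\Rightarrow\overline b$ mono" requires using the exactness of $\pi$ on the left column together with the commutativity $b\circ a = $ (the relevant composite) in $(\maltese)$ to conclude $\Ker\overline b$ is generated by $\Im\overline a$ and hence vanishes; the reverse implication "$\overline b$ mono $\Rightarrow\overline a=0$" needs that $\overline a$ factors through $\Ker\overline b$ by construction (again from the left column of $(\maltese)$), so a monic $\overline b$ forces $\overline a=0$. I would be careful that "factors through $\K$" and "is zero in $\B/\K$" coincide, which is immediate, and that the identification of syzygies in $\oB$ with the $\Omega$-construction in $\B$ is exactly the content of the equivalence $\oB\simeq\mod\underline{\Omega\C}$ (cited from \cite[Theorem 1.2]{LZ}) so that computing $\pd_{\oB}$ via the functor "evaluate on $\underline{\Omega\C}$" is legitimate. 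Modulo these diagram chases, the argument is a direct unwinding of the construction of $(\maltese)$.
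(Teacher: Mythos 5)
Your reduction of the statement to properties of the four-term sequence $\Omega X\xrightarrow{\overline a}\Omega C^1\xrightarrow{\overline b}\Omega C^2\xrightarrow{\overline c}X\to 0$ in $\oB$, and your treatment of the ``if'' direction ($\overline a=0$ gives a short exact sequence with projective first two terms, hence $\pd_{\oB}(X)\leq 1$), agree with the paper. The gap is in the ``only if'' direction, at the very step you flag as routine: the implication $\pd_{\oB}(X)\leq 1\Rightarrow\overline b$ monic is \emph{not} a formal consequence of ``the syzygy is projective,'' and no Yoneda/evaluation argument on $\underline{\Omega\C}$ will give it. From $\pd_{\oB}(X)\leq 1$ you only get that $\Im\overline b=\Ker\overline c$ is projective in $\oB$, so that $0\to\Ker\overline b\to\overline{\Omega C^1}\to\Im\overline b\to 0$ splits; this exhibits $\Ker\overline b=\Im\overline a$ as a projective direct summand of $\overline{\Omega C^1}$, not as zero. (A non-minimal projective presentation of a module of projective dimension $\leq 1$ need not have injective presentation map: $P\xrightarrow{0}P\xrightarrow{1}P\to 0$ is the standard counterexample.) So the whole content of the direction is to show that the particular presentation produced by $(\maltese)$ is minimal, and this is where the hypothesis $X\in\h_\C$ must enter --- your proposal never uses it, which is a sign something is missing.

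Concretely, the paper spends most of its proof on exactly this point: first it shows $\underline b$ is right minimal in $\B/\mathcal P$, by proving $d$ (hence $\underline d$) is right minimal --- here $X\in\h_\C$ is used, since a non-minimal $d$ would split a summand of $C^1$ off $X$ --- and transporting minimality up the diagram; then, because the splitting of $R_2=\Im\overline a$ off $\Omega C^1$ lives in $\oB=\B/\K$ while minimality lives in $\B/\mathcal P$, it performs a lifting argument in the Krull--Schmidt category $\B$, writing the $\EE$-triangle $\Omega X\to P_X\oplus\Omega C^1\to\Omega C^2$ in matrix form to conclude $R_2\in\mathcal P$ and hence $\overline a=0$. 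Your proposal would need to supply this minimality-plus-lifting argument (or a substitute) to close the equivalence $\pd_{\oB}(X)\leq 1\Leftrightarrow\overline b$ monic. A smaller point: the exactness of the four-term sequence, and in particular discarding $\overline{P_X}$, uses $\mathcal P\subseteq\K$ (which holds because $(\K,\D)$ is a cotorsion pair); this should be said rather than assumed.
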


\begin{proof}

The diagram $(\maltese)$ induces an exact sequence $\Omega X \xrightarrow{\overline a} \Omega C^1 \xrightarrow{\overline b} \Omega C^2 \xrightarrow{\overline c} X\to 0$ in $\oB$. 
If $a$ factors through $\K$, $X$ admits a short exact sequence $0\to \Omega C^1 \to \Omega C^2 \to X \to 0$ in $\oB$,
hence $\pd_{\oB}(X)\leq 1$.
\smallskip

Now we prove the ``only if" part, The proof is divided into two steps.

\smallskip

(1) We show that $\underline b$ is right minimal in $({\maltese})$.

We claim that $d$ is right minimal, otherwise $d$ can be written as $C^1_1\oplus C^1_2\xrightarrow{\svech{d_1}{0}} C^2$, this implies that $C^1_2$ is a direct summand of $X$. But $X\in \h_\C$, a contradiction. In fact $\underline d$ is also right minimal. Since if we have a morphism $C^1\xrightarrow{c^1} C^1$ such that $\underline {dc^1}=\underline d$, then we have $d(c^1-1_{C^1}):C^1 \xrightarrow{p^1} P'\xrightarrow{p^2} C^2$ where $P'\in \mathcal P$. The morphism $p^2$ factors through $d$, we have $p^2:P'\xrightarrow{p^3} C^1\xrightarrow{d} C^2$. Hence $d(c^1-1_{C^1})=dp^3p^1$, then $d(c^1-p^3p^1)=d$, which implies $c^1-p^3p^1$ is an isomorphism, thus $\underline {c^1}$ is an isomorphism. It follows that $\underline b$ is right minimal. Let $b_1:\Omega C^1\to \Omega C^1$ be a morphism such that $\underline {bb_1}=\underline b$. Then we have the following commutative diagram:
$$\xymatrix{
\Omega C^1 \ar[r]^{p_1} \ar[d]^{b_1} &P \ar[r]^{q_1} \ar[d] &C^1 \ar[d]^{c_1} \ar@{-->}[r] &\\
\Omega C^1 \ar[r]^{p_1} \ar[d]^{b} &P \ar@{->>}[r]^{q_1} \ar@{=}[d] &C^1 \ar[d]^d \ar@{-->}[r] &\\
\Omega C^2 \ar[r]^{p_2}  &P \ar[r]^{q_2} &C^2 \ar@{-->}[r] &.
}
$$
Since $\underline {bb_1}=\underline b$, we obtain that $bb_1-b$ factors through $p_1$, this implies $dc_1-d$ factors though $q_2$. Hence $\underline {dc_1}=\underline d$, which means $\underline {c_1}$ is an isomorphism. Let $\underline {c_2}$ be the inverse of $\underline {c_1}$. Then we have the following commutative diagram:
$$\xymatrix{
\Omega C^1 \ar[r]^{p_1} \ar[d]^{b_1} &P \ar[r]^{q_1} \ar[d] &C^1 \ar[d]^{c_1} \ar@{-->}[r] &\\
\Omega C^1 \ar[r]^{p_1} \ar[d]^{b_2} &P \ar[r]^{q_1} \ar[d] &C^1 \ar[d]^{c_2} \ar@{-->}[r] &\\
\Omega C^1 \ar[r]^{p_1}  &P \ar[r]^{q_1} &C^1 \ar@{-->}[r] &.
}
$$
Since $1_{C^1}-c_2c_1$ factors through $\mathcal P$, it factors through $q_1$. Thus $1_{\Omega C^1}-b_2b_1$ factors through $p^1$, we have $\underline {b_2b_1}=\underline 1_{\Omega C^1}$. By the similar argument we can find another morphism $b_2':\Omega C^1\to \Omega C^1$ such that $\underline {b_1b_2'}=\underline 1_{\Omega C^1}$. Hence $\underline {b_1}$ is an isomorphism and then $\underline b$ is right minimal.
\medskip

(2) We show $\overline a=0$.
\smallskip

Since $X\in \h_\C$, we have $\overline c\neq 0$. If $\overline b=0$, then $\underline b=0$, but it is right minimal, we obtain $\Omega C^1\in \mathcal P$ and $\overline a=0$. Now let $\overline b\neq 0$. Assume $\overline a\neq 0$, then we have the following exact sequence:
$$\xymatrix@C=0.5cm@R0.4cm{\Omega X \ar[rr]^{\overline a} \ar@{->>}[dr]_{\overline {r_4}} &&\Omega C^1 \ar[rr]^{\overline b} \ar@{->>}[dr]_{\overline {r_2}} &&\Omega C^2 \ar[r]^-{\overline c} &X \ar[r] &0\\
&R_2\ar@{ >->}[ur]_{\overline {r_3}}  &&R_1 \ar@{ >->}[ur]_{\overline {r_1}}}
$$
where $\Omega C^1 \xrightarrow{\overline {r_2}} R_1\xrightarrow{\overline {r_1}} \Omega C^2$ is an epic-monic factorization of $\overline b$ and $\Omega X \xrightarrow{\overline {r_4}} R_2\xrightarrow{\overline {r_3}} \Omega C^1$ is an epic-monic factorization of $\overline a$. Since $\pd_{\oB}(X)\leq 1$, we have $R_1\in \overline {\Omega \C}$, hence we get a split short exact sequence $0\to R_2\xrightarrow{\overline {r_3}} \Omega C^1 \xrightarrow{\overline {r_2}} R_1\to 0$ which implies $R_2\in \overline{\Omega \C}$. Thus $R_2$ is a direct summand of $\Omega C^1$. Then the $\EE$-triangle $\xymatrix{\Omega X \ar[r]^-{\svecv{a'}{a}} &P_X\oplus \Omega C^1 \ar[r]^-{\svech{-b'}{b}} &\Omega C^2 \ar@{-->}[r] &}$ has the following form
$$S\oplus R_2\xrightarrow{\left(\begin{smallmatrix}
a_{11}&a_{12}\\
a_{21}&a_{22}
\end{smallmatrix}\right)}T\oplus R_2 \xrightarrow{\left(\begin{smallmatrix}
t&r
\end{smallmatrix}\right)} \Omega C^2\dashrightarrow$$
where $a_{22}$ is an isomorphism. Note that $\svech{\underline t}{\underline r}=\underline b$ is right minimal. We have an isomorphism $$T\oplus R_2 \xrightarrow{\left(\begin{smallmatrix}
1_T&a_{12}\\
0&a_{22}
\end{smallmatrix}\right)} T\oplus R_2$$ such that $\svech{t}{r}\left(\begin{smallmatrix}
1_T&a_{12}\\
0&a_{22}
\end{smallmatrix}\right)=\svech{t}{0}.$ But $\underline b$ is right minimal, this implies $R_2\in \mathcal P$, then $\overline a=0$, a contradiction. Hence $\overline a=0$.
\end{proof}

\begin{prop}\label{imp}
Let $X,Y$ be in $\B$.
\begin{itemize}
\item[(1)] If $X\in \h_{\C}$ and $\pd_{\oB}(X)\leq 1$, then $\Ext^1_{\oB}( X, Y)\simeq [\C](X,\Sigma Y)/[\C,i](X,\Sigma Y)$, where morphism $i$ admits the following $\EE$-triangle:
$$\xymatrix{
Y \ar[r] &I \ar[r]^i &\Sigma Y \ar@{-->}[r] &
}
$$
where $I\in \mathcal I$. For convenience, we denote $[\C](X,\Sigma Y)/[\C,i](X,\Sigma Y)$ by $\overline {[\C]}(X,\Sigma Y)$. Moreover, if $\EE(X,Y)=0$, then $\Ext^1_{\oB}(X,Y)=0$.
\item[(2)] If $Y\in \h_{\C}$ and $\id_{\oB}(Y)\leq 1$, then $\Ext^1_{\oB}(X, Y)\simeq [\D](\Omega X, Y)/[\D,p](\Omega X, Y)$, where $p$ admits the following $\EE$-triangle:
$$\xymatrix{
\Omega X \ar[r]^p  & P \ar[r] &X \ar@{-->}[r] &
}
$$
where $P\in \mathcal P$. For convenience, we denote $[\D](\Omega X, Y)/[\D,p](\Omega X, Y)$ by $\underline {[\D]}(\Omega X,Y)$. Moreover, if $\EE(X,Y)=0$, then $\Ext^1_{\oB}(X, Y)=0$.
\end{itemize}
\end{prop}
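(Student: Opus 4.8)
The plan is to identify both $\Ext^1_{\oB}(X,Y)$ and $\overline{[\C]}(X,\Sigma Y)$ with the single group $\EE(C^1,Y)/d^{*}\EE(C^2,Y)$, where $X\xrightarrow{x}C^1\xrightarrow{d}C^2\dashrightarrow$ is the $\EE$-triangle occurring in $(\maltese)$ and $d^{*}\colon\EE(C^2,Y)\to\EE(C^1,Y)$ is the map induced by $d$. For the left hand side I would first invoke Proposition \ref{imp2}: since $\pd_{\oB}(X)\le 1$, the morphism $a$ in $(\maltese)$ factors through $\K$, so $\overline{a}=0$ and the exact sequence $\Omega X\xrightarrow{\overline{a}}\Omega C^1\xrightarrow{\overline{b}}\Omega C^2\xrightarrow{\overline{c}}X\to 0$ produced by $(\maltese)$ collapses to a short exact sequence $0\to\Omega C^1\xrightarrow{\overline{b}}\Omega C^2\xrightarrow{\overline{c}}X\to 0$ in $\oB$; this is a projective resolution because $\underline{\Omega\C}$ is the class of projectives of $\oB$. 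Applying $\Hom_{\oB}(-,Y)$ gives $\Ext^1_{\oB}(X,Y)\cong\Coker\bigl(\overline{b}^{*}\colon\Hom_{\oB}(\Omega C^2,Y)\to\Hom_{\oB}(\Omega C^1,Y)\bigr)$. The decisive step is a natural isomorphism $\Hom_{\oB}(\Omega C,Y)\cong\EE(C,Y)$ for $C\in\C$: from an $\EE$-triangle $\Omega C\xrightarrow{\alpha}P'\to C\dashrightarrow\delta$ with $P'\in\mathcal P$, applying $\B(-,Y)$ identifies $\EE(C,Y)$ with $\B(\Omega C,Y)/[\alpha](\Omega C,Y)$, and $[\alpha](\Omega C,Y)=[\mathcal P](\Omega C,Y)=[\K](\Omega C,Y)$ — the only nonformal inclusion, $[\K]\subseteq[\alpha]$, holding because any $g\colon\Omega C\to K$ with $K\in\K$ satisfies $g_{*}\delta\in\EE(C,K)=\EE(\C,\K)=0$ and hence factors through $\alpha$. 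Feeding the morphism of $\EE$-triangles encoded in $(\maltese)$ (with $b$ on the syzygies and $d$ on the $C^{i}$) through the naturality of the connecting homomorphisms turns $\overline{b}^{*}$ into $d^{*}$, so $\Ext^1_{\oB}(X,Y)\cong\EE(C^1,Y)/d^{*}\EE(C^2,Y)$.

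For the right hand side I would exploit the rigidity of $\C$: since $\EE(C^2,C')=0$ for $C'\in\C$, every morphism $X\to C'$ factors through $x$, so $[\C](X,Z)=x^{*}\B(C^1,Z)$ for all $Z$; in particular $[\C](X,\Sigma Y)=x^{*}\B(C^1,\Sigma Y)$ and $[\C,i](X,\Sigma Y)=x^{*}[i](C^1,\Sigma Y)$. Combining this with $\Ker\bigl(x^{*}\colon\B(C^1,\Sigma Y)\to\B(X,\Sigma Y)\bigr)=[d](C^1,\Sigma Y)$, read off from the same $\EE$-triangle, yields $\overline{[\C]}(X,\Sigma Y)\cong\B(C^1,\Sigma Y)/\bigl([i](C^1,\Sigma Y)+[d](C^1,\Sigma Y)\bigr)$. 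Finally the $\EE$-triangle $Y\to I\xrightarrow{i}\Sigma Y\dashrightarrow$ identifies $\B(C^{j},\Sigma Y)/[i](C^{j},\Sigma Y)$ with $\EE(C^{j},Y)$ (using $\EE(C^{j},I)=0$) and shows $\B(C^2,\Sigma Y)\to\EE(C^2,Y)$ is onto, so the image of $[d](C^1,\Sigma Y)$ in $\EE(C^1,Y)$ is exactly $d^{*}\EE(C^2,Y)$; hence $\overline{[\C]}(X,\Sigma Y)\cong\EE(C^1,Y)/d^{*}\EE(C^2,Y)$, matching the computation above. For the last assertion of (1), applying $\EE(-,Y)$ to $X\xrightarrow{x}C^1\xrightarrow{d}C^2\dashrightarrow$ gives an exact sequence $\EE(C^2,Y)\xrightarrow{d^{*}}\EE(C^1,Y)\xrightarrow{x^{*}}\EE(X,Y)$, so $\EE(C^1,Y)/d^{*}\EE(C^2,Y)$ is isomorphic to a subgroup of $\EE(X,Y)$ and therefore vanishes when $\EE(X,Y)=0$.

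Part (2) I would prove by the dual argument, working with the other cotorsion pair $(\K,\D)$ of the twin in place of $(\C,\K)$: by Lemma \ref{heart}, $Y\in\Cone(\D,\D)$, and $\id_{\oB}(Y)\le 1$ together with the dual of Proposition \ref{imp2} yields a short exact sequence $0\to Y\to\Sigma D_2\to\Sigma D_1\to 0$ in $\oB$ with $D_1,D_2\in\D$, which is an injective coresolution of $Y$ since $(\Sigma\D)/\mathcal I$ is the class of injectives of $\oB$. Then $\Ext^1_{\oB}(X,Y)\cong\Coker\bigl(\Hom_{\oB}(X,\Sigma D_2)\to\Hom_{\oB}(X,\Sigma D_1)\bigr)$, one has the dual identification $\Hom_{\oB}(X,\Sigma D)\cong\EE(X,D)$ for $D\in\D$ (now using $\EE(\K,\D)=0$ and $\mathcal I\subseteq\D$), rigidity of $\D$ reduces $\underline{[\D]}(\Omega X,Y)$ to $\EE(X,D_2)/e_{*}\EE(X,D_1)$ for the induced map $e\colon D_1\to D_2$, and the exact sequence $\EE(X,D_1)\xrightarrow{e_{*}}\EE(X,D_2)\to\EE(X,Y)$ gives the vanishing when $\EE(X,Y)=0$.

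The step I expect to be the main obstacle is the bookkeeping around ``factoring through $\K$'': the equality $[\K](\Omega C,Y)=[\mathcal P](\Omega C,Y)$ on hom-groups is what makes $\Hom_{\oB}(\Omega C,Y)\cong\EE(C,Y)$ work, and it relies precisely on the cotorsion-pair axiom $\EE(\C,\K)=0$ (dually $\EE(\K,\D)=0$); beyond that, one must keep the subgroups $[x]$, $[d]$, $[i]$ and the various connecting maps aligned carefully enough to see that all the identifications are compatible, in particular that $\overline{b}^{*}$ corresponds to $d^{*}$.
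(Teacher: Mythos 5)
Your argument is correct, and it shares the paper's skeleton --- both proofs start from Proposition \ref{imp2}, obtain the length-one projective resolution $0\to\Omega C^1\xrightarrow{\overline b}\Omega C^2\to X\to 0$ in $\oB$, and compute $\Ext^1_{\oB}(X,Y)$ as the cokernel of $\overline b^{\,*}$ --- but the transfer to $\overline{[\C]}(X,\Sigma Y)$ is organised differently. The paper stays at the level of hom-groups: it rewrites the cokernel as $\Hom_{\B}(\Omega C^1,Y)/\Im\Hom_{\B}(b,Y)$ (its justification that $\Hom_{\B}(b,K)$ is full for $K\in\K$ is exactly your observation that a map $\Omega C^1\to K$ kills the extension class since $\EE(\C,\K)=0$, hence factors through $p_1=p_2b$), and then identifies this with $[a](\Omega X,Y)/[a,q](\Omega X,Y)\simeq[x](X,\Sigma Y)/[x,i](X,\Sigma Y)$ by chasing the large commutative diagram. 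You instead route both sides through the single group $\EE(C^1,Y)/d^{*}\EE(C^2,Y)$, using the isomorphism $\Hom_{\oB}(\Omega C,Y)\simeq\EE(C,Y)$ on the left and rigidity of $\C$ together with the triangle $Y\to I\xrightarrow{i}\Sigma Y$ on the right; this makes the naturality of the comparison and the ``moreover'' clause (via the exact sequence $\EE(C^2,Y)\to\EE(C^1,Y)\to\EE(X,Y)$) completely explicit, arguably more transparently than the paper's displayed identifications. Two standing facts should be stated explicitly to close your chains of inclusions: $\mathcal P\subseteq\K$ (needed for $[\mathcal P]\subseteq[\K]$; it holds because $(\K,\D)$ is a cotorsion pair, so $\EE(P,\D)=0$ splits the approximation triangle of $P$), and, for part (2), $\D\subseteq\K$ --- which is precisely the twin condition $\EE(\C,\D)=0$ --- whence $\EE(\D,\D)=0$, the rigidity of $\D$ you invoke. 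Also, in your sketch of (2) the coresolution attached to a triangle $D_1\xrightarrow{e}D_2\to Y$ should read $0\to Y\to\Sigma D_1\to\Sigma D_2\to 0$ (your final formula $\EE(X,D_2)/e_{*}\EE(X,D_1)$ is the correct one); this is only a labelling slip. With these small additions your proof is complete.
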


\begin{proof}
We show (1), (2) is by dual.

If $\pd_{\oB}(X)\leq 1$, by the same method as in Proposition \ref{imp2}, we can get a short exact sequence $0\to \Omega C^1\xrightarrow{\overline b} \Omega C^2\xrightarrow{\overline c} X\to 0$ in $\oB$. Then
$$\Ext^1_{\oB}( X, Y)\simeq \Hom_{\oB}(\Omega C^1, Y)/\Im \Hom_{\oB}(\overline b, Y)\simeq\Hom_{\B}(\Omega C^1,Y)/\Im \Hom_{\B}(b, Y).$$
 To show the second equivalence, you only need to note that $\Hom_{\B}(b, K)$ is full if $K\in \K$. Then any morphism $\overline \alpha$ lies in $\Im \Hom_{\oB}(\overline b, Y)$ if and only if $\alpha$ lies in $\Im \Hom_{\B}(b, Y)$. Let $y\in \Hom_{\B}(\Omega C^1,Y)$. Then we have the following commutative diagram
$$\xymatrix{
\Omega X \ar[r]^q \ar[d]_{a} &P_X\ar[r] \ar[d] &X \ar@{=}[d] \ar@{-->}[r] &\\
\Omega C^1 \ar[r] \ar@{=}[d] &\Omega C^2 \ar[r] \ar[d] &X \ar[d]^x \ar@{-->}[r] &\\
\Omega C^1 \ar[r] \ar[d]_y &P \ar[r] \ar[d] &C^1 \ar@{-->}[r] \ar[d] &\\
Y \ar[r] &I \ar[r]^i &\Sigma Y \ar@{-->}[r] &
}
$$
It follows that
$$\begin{aligned}%
\Hom_{\B}(\Omega C^1,Y)/\Im \Hom_{\B}(b, Y)&\simeq[a](\Omega X ,Y)/[a,q](\Omega X,Y)\\
                     &\simeq [x](X,\Sigma Y)/[x,i](X,\Sigma Y)\\
                     &\simeq \overline{[\C]}(X,\Sigma Y).
\end{aligned}$$
\end{proof}

We introduce some important notions here, which are related to this proposition and will be used in Section 4.

\begin{defn}\label{d1}
An object $X\in \h$ is called $\C$-rigid if $\overline {[\C]}(X,\Sigma X)={[\C]}(X,\Sigma X)$. A subcategory $\X\subseteq \h$ is called $\C$-rigid if $\forall X\in \X$, $X$ is $\C$-rigid.
\end{defn}

\begin{rem}
Assume that $\oB$ is hereditary. According to the preceding proposition, a subcategory $\X$ is $\C$-rigid if and only if $\Ext^1_{\oB}(\overline \X,\overline \X)=0$.
\end{rem}

\begin{defn}
A subcategory $\X\subseteq \h$ is called maximal $\C$-rigid if it satisfies the following conditions:
\begin{itemize}
\item[(a)] $\X$ is $\C$-rigid;
\item[(b)] $\mathcal P\subseteq \X$;
\item[(c)] If $\add(\X\cap \X')\subseteq \h$ is $\C$-rigid, then $\X'\subseteq \X$.
\end{itemize}
An object $X\in \h$ is called maximal $\C$-rigid if $\add (\add X\cup \mathcal P)$ is maximal $\C$-rigid.
\end{defn}



\medskip
\section{Main result I}

Support tilting subcategories were introduced by Holm and J{\o}rgensen, which can be regard as a generalization of support tilting modules.
\begin{defn}\cite[Definition 2.1]{HJ}
To say that $\M$ is a support tilting subcategory of an abelian category $\A$ means that $\M$ is a full subcategory which
\begin{itemize}
\setlength{\itemsep}{2.5pt}
\item is closed under direct sums and direct summands;

\item is funtorially finite in $\A$;

\item satisfies $\Ext^2_{\A}(\M,-)=0$;

\item satisfies $\Ext^1_{\A}(\M,\M)=0$;

\item satisfies that if $A$ is a subquotient of an object from $\M$ such that
$\Ext^1_{\A}(\M,A)=0$, then $B$ is a quotient of an object from $\M$.
\end{itemize}
An object $M$ is called a support tilting object if $\add M$ is support tilting.
\end{defn}

\begin{rem}
According to \cite{AIR}, if $\A=\mod \Lambda$ where $\Lambda$ is a finite dimensional $k$-algebra, any $\tau_{\mathcal A}$-tilting module is support tilting.
\end{rem}


The following corollary is needed in the proof of our first main theorem.

\begin{cor}\label{ses}
Let $X\in \h_\C$ such that $\pd_{\oB}(X)\leq 1$ and $Y\notin \K$. If we have an $\EE$-triangle $$\xymatrix{Y \ar[r]^g &Z \ar[r]^f &X \ar@{-->}[r] &}$$
such that $\overline f$ is an epimorphism in $\oB$, then this $\EE$-triangle induces a short exact sequence $$0\to Y\xrightarrow{~\overline g~}  Z\xrightarrow{~\overline f~} X\to 0.$$
\end{cor}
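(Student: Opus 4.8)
The plan is to start from the $\EE$-triangle $\xymatrix{Y \ar[r]^g &Z \ar[r]^f &X \ar@{-->}[r] &}$ and apply the cohomological functor $\pi\colon \B\to \oB$ (Proposition on $\pi$ being cohomological). This immediately yields an exact sequence $\oB(-,Y)\to \oB(-,Z)\to \oB(-,X)$ and dually, so we get exactness of $\overline Y \xrightarrow{\overline g} \overline Z \xrightarrow{\overline f} \overline X$ in $\oB$ at the middle term $\overline Z$. Combined with the hypothesis that $\overline f$ is an epimorphism, it remains only to show that $\overline g$ is a monomorphism, i.e.\ that $\Ker \overline g = 0$ in $\oB$.

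To control $\Ker\overline g$, the natural move is to use the hypothesis $\pd_{\oB}(X)\le 1$. Applying $\Hom_{\oB}(X,-)$ (or rather extracting the long exact sequence of the $\EE$-triangle in $\oB$) and using $\Ext^2_{\oB}(X,-)=0$ together with $\pd_{\oB}(X)\le1$, the kernel of $\overline g$ should be forced to be a quotient of $\Ext^1_{\oB}(X,Y)$ in a way that, when chased back, shows $\Ker\overline g$ is a summand of something in $\K$. More concretely, I would complete $g$ to a commutative diagram using the projective resolution $0\to \Omega C^1 \to \Omega C^2 \to X \to 0$ of $X$ in $\oB$ (available by Proposition \ref{imp2} / the construction preceding it, since $\pd_{\oB}(X)\le1$), lift along $\overline f$, and read off that $\Ker\overline g$ receives an epimorphism from a projective object $\overline{\Omega C^1}$-type term; then right-minimality arguments as in the proof of Proposition \ref{imp2} kill the extra summand. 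Since $\overline g$ is a morphism between the objects $Y$ and $Z$ of $\oB$ and $\oB\simeq \h/\C$ has enough projectives $\underline{\Omega\C}$, one can instead argue directly: a kernel of $\overline g$ in $\oB$ that is nonzero would, via $Y\notin\K$, contradict the fact that $\overline f$ is epic together with $\pd_{\oB}(X)\le 1$ forcing the sequence to be exact on the left.

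The cleanest route, which I would actually write, is: form the $\EE$-triangle, apply $\pi$ to obtain the exact sequence $\overline Y\xrightarrow{\overline g}\overline Z\xrightarrow{\overline f}\overline X\to 0$ (right exactness from $\overline f$ epic plus cohomologicality), then take a projective presentation $\overline P_1\to\overline P_0\to\overline X\to 0$ with $\overline P_i\in\underline{\Omega\C}$, lift it to a map of the presentation into the sequence, and use $\pd_{\oB}(X)\le 1$ to conclude the syzygy is projective, hence the induced map on kernels splits; a dimension/right-minimality count (exactly as in step (1)--(2) of the proof of Proposition \ref{imp2}) then shows $\Ker\overline g=0$. The hypothesis $Y\notin\K$ is used to guarantee $\overline Y\ne 0$ so that the statement "$0\to Y\to Z\to X\to 0$ is short exact" is not vacuous/degenerate and that no nonzero summand of $Y$ is annihilated.

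The main obstacle I anticipate is the left-exactness, i.e.\ proving $\overline g$ is monic: cohomologicality of $\pi$ only gives exactness in the middle, and there is no a priori reason an $\EE$-triangle becomes left exact after applying $\pi$ — this genuinely needs $\pd_{\oB}(X)\le 1$ (equivalently $\Ext^2_{\oB}(X,-)=0$) and likely a right-minimality argument mirroring the one used for the morphism $a$ in Proposition \ref{imp2}. Handling the possible projective direct summands of $Z$ (which $\pi$ annihilates) and ensuring they don't obstruct monicity of $\overline g$ is the delicate bookkeeping step.
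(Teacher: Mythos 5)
Your first step (cohomologicality of $\pi$ plus $\overline f$ epic gives exactness of $Y\xrightarrow{\overline g}Z\xrightarrow{\overline f}X\to 0$ in $\oB$) agrees with the paper, and you correctly isolate monicity of $\overline g$ as the real issue. But the mechanism you propose for that step does not work. After the first application of $\pi$, all of your suggested arguments stay inside $\oB$: you compare the exact sequence $Y\to Z\to X\to 0$ with a projective presentation $\overline{\Omega C^1}\to\overline{\Omega C^2}\to X\to 0$ and hope that $\pd_{\oB}(X)\le 1$ plus splitting/right-minimality yields $\Ker\overline g=0$. This cannot succeed: the purely abelian-category data ``$Y\to Z\to X\to 0$ exact with $\pd_{\oB}(X)\le 1$'' only controls $\Im\overline g=\Ker\overline f$ and is perfectly consistent with $\overline g$ having a nonzero kernel (take any exact sequence of this shape with a superfluous summand of $Y$ killed by $\overline g$). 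Concretely, the comparison map obtained by lifting the presentation along $\overline f$ lands in $\Ker\overline f$, not in (let alone onto) $\Ker\overline g$, so the claim that ``$\Ker\overline g$ receives an epimorphism from $\overline{\Omega C^1}$'' has no justification; moreover there is no ``long exact sequence of the $\EE$-triangle in $\oB$'' --- $\pi$ is cohomological only in the three-term sense, with no connecting maps to $\Ext_{\oB}$; and the ``argue directly'' paragraph is circular, since ``$\pd_{\oB}(X)\le 1$ forcing the sequence to be exact on the left'' is exactly what must be proved.

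The missing idea is to go back to the $\EE$-triangle in $\B$ and use its extension class. The paper lifts $\overline c\colon\Omega C^2\to X$ along $\overline f$ (projectivity of $\Omega C^2$ in $\oB$), then corrects the lift inside $\B$ (using that $P\in\mathcal P$ is projective and that any morphism from $\Omega C^2$ to an object of $\K$ extends along $\Omega C^2\to P$, because $\EE(C^2,\K)=0$) to obtain an honest morphism of $\EE$-triangles from $\Omega X\to P_X\to X$ through $\Omega C^1\to\Omega C^2\to X$ into $Y\xrightarrow{g}Z\xrightarrow{f}X$, whose component $\Omega X\to Y$ is $da$ where $a\colon\Omega X\to\Omega C^1$ factors through $\K$ by Proposition \ref{imp2} (this is where $X\in\h_\C$ and $\pd_{\oB}(X)\le 1$ actually enter). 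Remark \ref{useful}(c) then produces a new $\EE$-triangle $\Omega X\to Y\oplus P_X\to Z$, and applying the cohomological functor $\pi$ to it, the first map has zero component into $Y$, which forces $\overline g$ to be a monomorphism. Without this return to $\B$ (or some equivalent use of the fact that $Y$ is the actual cocone of $f$, not merely an object mapping onto $\Ker\overline f$), your outline has a genuine gap.
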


\begin{proof}
By hypothesis, we already have an exact sequence $ Y\xrightarrow{\overline g} Z\xrightarrow{\overline f}  X\to 0$. It is enough to show $\overline g$ is a monomorphism.

$X$ admits the following commutative diagram
$$\xymatrix{
\Omega C^1 \ar@{=}[r] \ar[d]^b &\Omega C^1\ar[d]^{p_1}\\
\Omega C^2 \ar[r]^{p_2} \ar[d]^c &P\ar[r]^{q_2} \ar[d]^{q_1} &C^2 \ar@{=}[d] \ar@{-->}[r] &\\
X \ar[r]^x \ar@{-->}[d] &C^1 \ar@{-->}[d] \ar[r]^d &C^2 \ar@{-->}[r] &\\
& &
}$$

Since $\overline f$ is an epimorphism and $\Omega C^2$ is projective in $\oB$, there is a morphism $z:\Omega C^2\to Z$ such that $\overline {fz}=\overline c$. Then $fz-c$ factors through $p_2$, that is to say, there is a morphism $q':P\to X$ such that $fz-c=q'p_2$. But $P$ is projective, hence there is a morphism $q'':P\to Z$ such that $fq''=q'$. It follows that $f(z-q''p_2)=c$ and  we have the following commutative diagram
$$
\xymatrix{
\Omega X \ar[r]^q \ar[d]_{a}  &P_X \ar[r] \ar[d] &X \ar@{=}[d] \ar@{-->}[r] &\\
\Omega C^1 \ar[r] \ar[d]_d &\Omega C^2 \ar[r]^c \ar[d]^{z-q''p_2} &X \ar@{=}[d] \ar@{-->}[r] &\\
Y \ar[r]_g &Z \ar[r]_f &X \ar@{-->}[r] &
}
$$
where $a$ factors through $\K$. Then we have an $\EE$-triangle $\xymatrix{\Omega X \ar[r]^-{\svecv{da}{q}} &Y\oplus P_X \ar[r]^-{\svech{g}{*}} &X \ar@{-->}[r] &}$ which induces an exact sequence $\Omega X\xrightarrow{0} Y\xrightarrow{~\overline g~} X$. Hence $\overline g$ is a monomorphism.
\end{proof}

\begin{thm}\label{main1}
Assume that the abelian category  $\oB$ is hereditary. Let $\V$ be a funtorially finite subcategory which is closed under direct sums and summands. Then $\overline {\V}$ is support tilting if $\mathcal P\subseteq \V$ and $\overline {\V^{\bot_1}}\subseteq \Fac \overline \V$. 
\end{thm}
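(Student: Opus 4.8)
To show $\overline{\V}$ is a support tilting subcategory of $\oB$, I must verify the five conditions in the definition of Holm--J\o rgensen. Three of them are almost immediate: $\overline{\V}$ is closed under direct sums and summands because $\V$ is and $\K\subseteq \V$ (so the quotient functor preserves these closure properties); $\overline{\V}$ is functorially finite in $\oB$ because $\V$ is functorially finite in $\B$ and $\pi:\B\to\oB$ is a (dense, full) quotient functor, so approximations descend; and $\Ext^1_{\oB}(\overline{\V},\overline{\V})=0$ follows from Proposition~\ref{imp}: for $V\in\V$ we have $\EE(V,V)=0$ is \emph{not} assumed, but $\mathcal P\subseteq \V$ together with the structure of $\oB$ gives what we need --- more precisely, I expect to use that $\V^{\bot_1}\supseteq\V$ would be too strong, so instead the vanishing of $\Ext^1_{\oB}$ on $\overline{\V}$ should be extracted from the hypothesis $\overline{\V^{\bot_1}}\subseteq \Fac\overline{\V}$ combined with heredity. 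Let me restructure: the two substantive conditions are (i) $\Ext^2_{\oB}(\overline{\V},-)=0$ and (ii) the subquotient-closure axiom, and I will address these using heredity and the $\Fac$ hypothesis respectively.

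\textbf{Condition $\Ext^2_{\oB}(\overline{\V},-)=0$.} Since $\oB$ is hereditary, $\Ext^2_{\oB}(-,-)=0$ identically, so this condition holds trivially. This is why heredity is assumed. Similarly, in a hereditary abelian category every object has projective dimension $\le 1$, so for every $V\in\V$ we automatically have $\pd_{\oB}(\overline V)\le 1$; this is what lets me invoke Corollary~\ref{ses} and Proposition~\ref{imp} freely below.

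\textbf{The vanishing $\Ext^1_{\oB}(\overline{\V},\overline{\V})=0$.} I would argue as follows. Take $V,V'\in\V_\K$ (the case where a summand lies in $\K$ is trivial since $\overline{\K}=0$), so $V,V'\in\h_\C$ by the Proposition preceding Subsection~2.2, part (d). Any class $\xi\in\Ext^1_{\oB}(\overline V,\overline{V'})$ is represented by a short exact sequence $0\to \overline{V'}\to \overline{Z}\to \overline{V}\to 0$; I want to lift it to an $\EE$-triangle $V'\to Z\to V\dashrightarrow$ and then observe that $Z\in\V^{\bot_1}$ forces, via $\overline{\V^{\bot_1}}\subseteq\Fac\overline{\V}$, that the sequence splits. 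The lifting uses that $\pi$ is cohomological and that $\pd_{\oB}(\overline V)\le 1$: the projective presentation $\Omega C^2\to \Omega C^1$ of $V$ in $\oB$ (from diagram $(\maltese)$ with $a$ factoring through $\K$) pulls back the extension, and one realizes the pullback as an $\EE$-triangle in $\B$. The key point is that the middle term of the lifted $\EE$-triangle lies in $\V^{\bot_1}$: this should follow because $\EE(V,-)$ applied to $V'\to Z\to V$ and the rigidity-type input gives $\EE(\V,Z)=0$. Then $\overline Z\in\overline{\V^{\bot_1}}\subseteq\Fac\overline\V$, and a surjection $\overline{V''}\twoheadrightarrow\overline Z$ with $V''\in\V$ splits the epi $\overline Z\twoheadrightarrow\overline V$ by the $\Ext^1$-freeness we are trying to establish --- so this must be bootstrapped carefully, or else derived directly from the fact that $\overline V$ being a quotient of $\overline Z\in\Fac\overline\V$ and the map $\overline{V'}\hookrightarrow\overline Z$ being split because $\Fac\overline\V$ behaves well. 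I expect this circularity to be broken by first proving $\Ext^1_{\oB}(\overline\V,\overline{\mathcal P})$ or some anchor case, or by using that $\V^{\bot_1}\supseteq\V$ is in fact forced.

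\textbf{The subquotient axiom — the main obstacle.} The remaining (and hardest) condition: if $A$ is a subquotient of an object of $\overline{\V}$ with $\Ext^1_{\oB}(\overline{\V},A)=0$, then $A$ is a quotient of an object of $\overline{\V}$. Write $A=\overline B$ for some $B\in\B$; since $\oB$ is hereditary, a subquotient of $\overline\V$ is the same as a quotient of a subobject, so there is $\overline{V}\in\overline\V$, a subobject $\overline{V'}\hookrightarrow\overline V$, and an epi $\overline{V'}\twoheadrightarrow A$. From $\Ext^1_{\oB}(\overline\V,A)=0$ I want to deduce $A\in\overline{\V^{\bot_1}}$, and then the hypothesis $\overline{\V^{\bot_1}}\subseteq\Fac\overline{\V}$ gives exactly that $A$ is a quotient of an object from $\overline\V$. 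To get $A\in\overline{\V^{\bot_1}}$, I lift $A=\overline B$ with $B\in\h_\C$ (replacing $B$ by a $\K$-complement-free model, using part (d) of the Proposition again), and translate the hypothesis $\Ext^1_{\oB}(\overline\V,\overline B)=0$ via Proposition~\ref{imp}(1) applied with roles $X\in\V$ (so $\pd_{\oB}\le 1$ by heredity) and $Y=B$: this says $\overline{[\C]}(V,\Sigma B)=0$ for all $V\in\V$, and I need to convert this back into $\EE(V,B)=0$, i.e.\ $B\in\V^{\bot_1}$. Proposition~\ref{imp}(1) gives the implication $\EE(X,Y)=0\Rightarrow \Ext^1_{\oB}(X,Y)=0$ but I need the converse under the extra $\pd$ hypothesis; the converse should hold because the natural map $\EE(V,B)\to\overline{[\C]}(V,\Sigma B)$ ought to be \emph{injective} (not just the zero-extension landing at zero) — verifying this injectivity, using the $\EE$-triangle $B\to I\to \Sigma B\dashrightarrow$ and the long exact sequence of $\EE(V,-)$, is the crux. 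Once $B\in\V^{\bot_1}$, then $A=\overline B\in\overline{\V^{\bot_1}}\subseteq\Fac\overline\V=\Fac\overline{\V}$, as required. The step I expect to fight hardest with is precisely this identification $\Ker\big(\EE(V,B)\to\overline{[\C]}(V,\Sigma B)\big)=0$, since $[\C,i](V,\Sigma B)$ absorbs genuine extensions and one must show those are exactly the ones coming from non-split but "inflation-through-$\mathcal P$" triangles, i.e.\ the zero class in $\EE$.
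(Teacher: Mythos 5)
There is a genuine gap, and it sits exactly where you predicted you would ``fight hardest''. Your route to the subquotient axiom is to show that $\Ext^1_{\oB}(\overline{\V},A)=0$ forces a lift $B\in\h_\C$ of $A$ to lie in $\V^{\bot_1}$, i.e.\ you need a converse to the ``moreover'' part of Proposition \ref{imp}: injectivity of the comparison between $\EE(V,B)$ and $\Ext^1_{\oB}(\overline V,\overline B)\simeq \overline{[\C]}(V,\Sigma B)$. That statement is not merely delicate, it is false: passing to $\oB$ genuinely kills extension classes. For instance, when $\B$ is a $2$-Calabi--Yau triangulated category with cluster tilting $\C=\add T[1]$ (a fully rigid subcategory, with $\oB\simeq\mod\End(T)^{\op}$), one has $\EE(X,Y)\simeq \mathbb{D}\,\EE(Y,X)$, so whenever $\Ext^1_{\oB}(\overline X,\overline Y)=0\neq \Ext^1_{\oB}(\overline Y,\overline X)$ the group $\EE(X,Y)$ is nonzero even though its image in $\Ext^1_{\oB}$ vanishes; already for $kA_2$ the two simples (both outside $\K$) give such a pair. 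So the reduction ``$\Ext^1_{\oB}(\overline{\V},A)=0\Rightarrow A\in\overline{\V^{\bot_1}}$'' cannot be repaired, and your argument for $\Ext^1_{\oB}(\overline{\V},\overline{\V})=0$ is likewise not closed (you acknowledge its circularity yourself).

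The paper's proof avoids this implication entirely by applying the hypothesis $\overline{\V^{\bot_1}}\subseteq\Fac\overline{\V}$ to a different object than $Y$. Given $Y$ with $\Ext^1_{\oB}(\overline{\V},Y)=0$ and a subquotient presentation $V\twoheadrightarrow T\hookleftarrow Y$ with $\overline v$ a right $\overline{\V}$-approximation, it forms the pullback $U$, obtaining $0\to W\to U\to Y\to 0$ and $0\to W\to V\to T\to 0$ with common cokernel $Z$; the vanishing $\Ext^1_{\oB}(\overline{\V},Y)=0$ is used only to check that $V\to Z$ is a right $\overline{\V}$-approximation. One then replaces it by an honest right $\V$-approximation $V'\oplus P_Z\to Z$ in $\B$ (this is where $\mathcal P\subseteq\V$ enters), whose cocone $W'$ lies in $\V^{\bot_1}$ automatically from the approximation property -- no converse of Proposition \ref{imp} is needed. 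Corollary \ref{ses} turns this into a short exact sequence in $\oB$, $U$ becomes a direct summand of $W''\oplus V$, hence $U\in\overline{\V^{\bot_1}}\subseteq\Fac\overline{\V}$, and $Y$, being a quotient of $U$, is a quotient of an object of $\overline{\V}$ (the degenerate case $W'\in\K$ being handled by a splitting argument). You would need to adopt this ``apply the hypothesis to the approximation cocone, not to $Y$'' mechanism, since the direct lifting you propose fails.
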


\begin{proof}
Since $\V$ is closed under direct sums and direct summands. It follows that
$\overline{\V}$ is closed under direct sums and direct summands.
Moreover, $\V$ is funtorially finite implies that $\overline{\V}$ is funtorially finite.
\smallskip

Since $\oB$ is hereditary, the condition $\Ext^2_{\oB}(\overline{\V},-)=0$ is satisfied.
\smallskip

Since $\overline {\V^{\bot_1}}\subseteq \Fac \overline \V$, by Proposition \ref{imp}, $\Ext^1_{\oB}(\overline{\V},\overline{\V})=0$. 
\smallskip

Let $0\neq Y\in \oB$ such that $\Ext^1_{\oB}(\overline \V, Y)=0$ and $Y$ is a subquotient of an object $ V\in \overline {\V}$. Then we have an epimorphism and a monomorphism $\xymatrix{V \ar@{->>}[r]^{\overline v} &T & Y \ar@{ >->}[l]_{\overline y}}$. Since $\overline {\V}$ is contravariantly finite, we can assume that $\overline v$ is a right $\overline {\V}$-approximation of $T$. Then we have the following commutative diagram of short exact sequences in $\oB$.
$$\xymatrix{
&&0 \ar[d] &0 \ar[d]\\
0 \ar[r] & W \ar[r] \ar@{=}[d] & U \ar[r] \ar[d] & Y \ar[r] \ar[d]^{\overline y} &0\\
0 \ar[r] & W \ar[r] & V \ar[r]^{\overline v} \ar[d]^{\overline z_1} & T \ar[r] \ar[d]^{\overline z_2} &0\\
&& Z \ar@{=}[r] \ar[d] & Z \ar[d]\\
&&0 &0
}
$$
Now we show that $\overline z_1$ is a right $\overline {\V}$-approximation.
Let $v':V'\to Z$ be any morphism where $V'\in \V$. Since $\Ext^1_{\oB}(\overline \V, Y)=0$, there is a morphism $\overline t:  V'\to  T$ such that $\overline v'= \overline {z_2t}$. Since $\overline v$ is a right $\overline {\V}$-approximation of $T$, there is a morphism $\overline v''\colon V'\to V$ such that $\overline t=\overline {vv''}$. Hence $\overline v'=\overline {z_2vv''}=\overline {z_1v''}$.


There is a right $\V$-approximation $z_1':V'\to Z$, then $\overline z_1'$ is also a right $\overline \V$-approximation. Moreover, it is an epimorphism. On the other hand, $z_1'$ admits the following commutative diagram
$$\xymatrix{
\Omega Z \ar[r] \ar@{=}[d] &W' \ar[r] \ar[d] &V' \ar[d]^{z_1'} \ar@{-->}[r] &\\
\Omega Z \ar[r]  &P_Z \ar[r]^p  &Z \ar@{-->}[r] &
}
$$
where $P_Z\in \mathcal P$. Since $\mathcal P\subseteq \V$, in the $\EE$-triangle $\xymatrix{W'\ar[r] &V'\oplus P_Z \ar[rr]^-{\alpha=\svech{z_1'}{p}} &&Z \ar@{-->}[r] &}$, $\alpha$ is still a right $\V$-approximation of $Z$. Then $W'\in \V^{\bot_1}$.

If $W'\in \K$, then $\overline z_1'$ becomes an isomorphism, thus $Z\in \overline \V$. Hence $T=Y\oplus Z$ in $\oB$ since $\Ext^1_{\oB}(\overline \V,Y)=0$. Then $Y$ is a quotient of $V\in \overline \V$.

If $W'\notin \K$, let $W'=W''\oplus K$ where $W''\in \h_\C$ and $K\in \K$. By Corollary \ref{ses}, we have a short exact sequence $0\to  W'' \to V'\xrightarrow{\overline z_1'} Z\to 0$. Since $U$ is a direct summand of $W''\oplus V$ in $\oB$, we have $U\in \overline {\V^{\bot_1}}\subseteq \Fac \overline \V$. Hence $Y$ is a quotient of an object in $\overline \V$.
\end{proof}

By Theorem \ref{main1}, we have the following corollary, which generalizes \cite[Theorem 2.2]{HJ}.

\begin{cor}\label{cor1}
Let $\oB$ be hereditary and $\M$ be a cluster tilting subcategory. Then $\overline {\M}$ is a support tilting subcategory in $\oB$.
\end{cor}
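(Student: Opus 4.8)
The plan is to deduce Corollary \ref{cor1} directly from Theorem \ref{main1} by checking its two hypotheses with $\V = \M$. Since $\M$ is cluster tilting, it is functorially finite by definition, and it is automatically closed under direct sums and direct summands (as noted right after the definition of cluster tilting in Section~2); moreover a cluster tilting subcategory contains all projectives, so $\mathcal P \subseteq \M$. Thus the only nontrivial thing to verify is the containment $\overline{\M^{\bot_1}} \subseteq \Fac \overline{\M}$.

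For this, first I would identify $\M^{\bot_1}$. Since $\M$ is cluster tilting, condition (b) in the definition of weak cluster tilting says $X \in \M$ if and only if $\EE(\M,X) = 0$, i.e. $\M^{\bot_1} = \M$. Hence $\overline{\M^{\bot_1}} = \overline{\M}$, and we need $\overline{\M} \subseteq \Fac \overline{\M}$, which is trivially true: every object of $\overline{\M}$ is (the identity quotient of) an object of $\overline{\M}$. With both hypotheses of Theorem \ref{main1} now verified, that theorem gives at once that $\overline{\M}$ is support tilting in $\oB$.

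I should double-check one subtlety: Theorem \ref{main1} is stated under the standing assumption that $\oB$ is hereditary, which is exactly the hypothesis of Corollary \ref{cor1}, so there is no gap there. The only place I would be slightly careful is the equality $\M^{\bot_1} = \M$: it uses that $\M$ is \emph{weak} cluster tilting, not merely rigid, and that $\EE(\M,X)=0 \Leftrightarrow X \in \M$ is precisely condition (b). No other input is needed.

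The main (indeed only) obstacle is cosmetic rather than mathematical: making sure that the verification of the hypotheses of Theorem \ref{main1} cites the right earlier facts (functorial finiteness and closure properties of cluster tilting subcategories, and $\mathcal P \subseteq \M$). Once those are in place, the proof is a one-line application of Theorem \ref{main1}. So the write-up will essentially be: ``Apply Theorem \ref{main1} with $\V = \M$; the hypotheses hold because $\M$ is cluster tilting, whence $\M^{\bot_1} = \M$ and $\overline{\M^{\bot_1}} = \overline{\M} \subseteq \Fac\overline{\M}$.''
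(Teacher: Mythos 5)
Your proposal is correct and is exactly the argument the paper intends: Corollary \ref{cor1} is stated as an immediate consequence of Theorem \ref{main1}, and your verification of its hypotheses (functorial finiteness, closure properties, $\mathcal P\subseteq\M$, and $\M^{\bot_1}=\M$ by condition (b) of cluster tilting, so $\overline{\M^{\bot_1}}=\overline{\M}\subseteq\Fac\overline{\M}$ trivially) fills in precisely the details the paper leaves implicit.
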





If we assume that $\B$ is a 2-Calabi-Yau triangulated category, by the result in \cite{LZ}, any fully rigid subcategory is a cluster tilting subcategory. Hence $\h=\B$ and $\oB=\B/\C$. Moreover, when we assume that $\oB$ is hereditary and has finite length, since $\oB$ is also Krull-Schmidt and Hom-finte, we get any subcategory of $\oB$ is funtorially finite. Then we have the following observation.

\begin{prop}
Let $\B$ be a 2-Calabi-Yau triangulated category with shift functor $[1]$ and $\C$ be a cluster tilting subcategory. When $\oB$ is hereditary and has finite length, there is a one-to-one correspondence between the weak cluster tilting subcategories $\X$ and support tilting subcategories $\overline \W\subseteq \oB$.
\end{prop}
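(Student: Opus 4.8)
The plan is to prove that the assignment $\W\mapsto\overline\W$, sending a weak cluster tilting subcategory of $\B$ to its image in $\oB$, is the asserted bijection. By \cite{LZ}, in the $2$-Calabi--Yau case every fully rigid subcategory $\C$ is cluster tilting, so $\h=\B$, $\C=\K=\C^{\bot_1}$ and $\oB=\B/\C$; since $\oB$ is Hom-finite, Krull--Schmidt and of finite length, every subcategory of $\oB$ is functorially finite, so the functorial-finiteness clause in the definition of a support tilting subcategory is automatic. For a weak cluster tilting $\W$ one has $\W^{\bot_1}=\W$ by definition, so $\overline{\W^{\bot_1}}=\overline\W\subseteq\Fac\overline\W$, and $\mathcal P\subseteq\W$; the argument of Corollary \ref{cor1} (through Theorem \ref{main1}) needs only these facts together with functorial finiteness, the latter holding in $\oB$. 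Hence $\overline\W$ is support tilting, and the map $\W\mapsto\overline\W$ is well defined.

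First I would establish injectivity. Lifting from $\oB$ to $\B$ is rigid on indecomposables: if $M$ is an indecomposable of $\B$ not lying in $\C$, then $\End_{\oB}(M)$ is a quotient of the local ring $\End_\B(M)$, hence $\overline M$ is indecomposable; and if $M,M'$ are two such with $\overline M\cong\overline{M'}$, comparing Krull--Schmidt summands in $M\oplus C\cong M'\oplus C'$ (with $C,C'\in\C$) forces $M\cong M'$. Consequently a weak cluster tilting $\W$ satisfies $\W=\add\bigl(\W^{\mathrm{o}}\cup(\W\cap\C)\bigr)$, where $\W^{\mathrm{o}}$ is the additive closure of the (unique) indecomposable lifts of the indecomposables of $\overline\W$, while $\W\cap\C=\{C\in\C\text{ indecomposable}\mid\EE(C,\W^{\mathrm{o}})=0\}$ --- one inclusion is immediate from the biconditional definition of weak cluster tilting ($C\in\W\Rightarrow\EE(C,\W)=0$), the other uses $\EE(\C,\C)=0$ and again that definition. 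Thus $\W$ is completely determined by $\overline\W$, so the map $\W\mapsto\overline\W$ is injective.

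For surjectivity I would run the previous description in reverse: given a support tilting subcategory $\overline\W$ of $\oB$, let $\W^{\mathrm{o}}$ be the additive closure of the indecomposable lifts of the indecomposables of $\overline\W$, put $\W^{\mathrm{o}}_\C=\{C\in\C\text{ indecomposable}\mid\EE(C,\W^{\mathrm{o}})=0\}$, and set $\W=\add(\W^{\mathrm{o}}\cup\W^{\mathrm{o}}_\C)$; one then has to verify that $\W$ is weak cluster tilting. The essential point is that $\W^{\mathrm{o}}$ is rigid in $\B$, i.e., that a $\C$-rigid object of $\h$ whose image is rigid in the hereditary, finite-length category $\oB$ is genuinely $\EE$-rigid. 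Granting this, the remaining vanishing in $\EE(\W,\W)$ follows from $\W^{\mathrm{o}}_\C\subseteq\C$ and its defining property; the two biconditionals defining weak cluster tilting follow from the maximality already encoded in $\W^{\mathrm{o}}_\C$ together with a length count comparing the number of indecomposable summands of $\W$ with the number of isomorphism classes of simple objects of $\oB$; and $\overline\W=\overline{\W^{\mathrm{o}}}$ returns the support tilting subcategory one started from. With the injectivity above this yields the claimed bijection. Alternatively, surjectivity can be deduced from the bijection of \cite{AIR},\cite{YZ} between (weak) cluster tilting subcategories of a $2$-Calabi--Yau triangulated category and support $\tau$-tilting subcategories of its cluster-tilting quotient, combined with the fact --- valid because $\oB$ is hereditary --- that the support $\tau$-tilting subcategories of $\oB$ are exactly its support tilting subcategories (\cite{AIR}).

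The main obstacle is precisely this rigidity upgrade. A weak cluster tilting subcategory is rigid for the full triangulated extension bifunctor $\EE(-,-)=\B(-,-[1])$, whereas the hypotheses on $\overline\W$ only control the part of those extensions that factors through $\C$; bridging this gap is where the hereditariness and finite length of $\oB$, and the $\tau$-tilting input, genuinely enter, via the identifications of Propositions \ref{imp2} and \ref{imp} (which express $\Ext^1_{\oB}$ through $\C$-factoring morphisms and through $\pd_{\oB}$ and $\id_{\oB}$) together with $2$-Calabi--Yau duality.
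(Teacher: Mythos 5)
Your map in the forward direction cannot simply be routed ``through Theorem \ref{main1}''. That theorem hypothesizes (and its proof genuinely uses) functorial finiteness of $\V$ in $\B$: the step producing $z_1'\colon V'\to Z$ and hence the object $W'\in\V^{\bot_1}$ requires a right $\V$-approximation \emph{in} $\B$, not merely a right $\overline{\V}$-approximation in $\oB$. A weak cluster tilting subcategory $\X$ need not be functorially finite in $\B$ (if it were, it would be cluster tilting and Corollary \ref{cor1} would already settle this direction), so your remark that ``functorial finiteness holds in $\oB$'' does not repair the argument. The paper therefore does not invoke Theorem \ref{main1} wholesale; it reruns the subquotient verification, replacing the missing approximation in $\B$ by a lift $z_1$ of the right $\overline{\X}$-approximation of $Z$, completes it to a triangle $Z[-1]\xrightarrow{u}U'\to X\xrightarrow{z_1}Z$, and shows $U'\in\T^{\bot_1}$, so that $U\in\X^{\bot_1}=\X$, using the decomposition $\X=\add(\W\cup\T)$, the identification $\overline{\X}^{\bot_1}=\W^{\bot_1}\cap\B_\C$ from \cite[Lemma 3.3]{HJ}, and Corollary \ref{ses}. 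This triangulated input is exactly what your sketch omits.

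The larger gap is in surjectivity. You construct a candidate lift and then ``grant'' the crucial rigidity upgrade (that a $\C$-rigid lift whose image is $\Ext^1_{\oB}$-rigid is rigid in $\B$), and you settle the two biconditionals of weak cluster tilting by ``a length count'' against the number of simples of $\oB$ --- an argument that is not carried out and does not even make sense for subcategories with infinitely many indecomposables. Your fallback via \cite{AIR} and \cite{YZ} also does not apply at this generality: those bijections concern basic cluster tilting \emph{objects} in a $2$-Calabi--Yau category with a cluster tilting object and support $\tau$-tilting modules over the resulting algebra, whereas here one needs weak cluster tilting subcategories (possibly not functorially finite) and support tilting subcategories not generated by a single object. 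The paper avoids the whole construction by quoting \cite[Theorem 3.5]{HJ}, which is precisely the lifting statement in this hereditary setting; only your injectivity argument, via $\X=\add(\W\cup\T)$ with $\T=\{C\in\C\mid\EE(C,\W)=0\}$, matches what the paper does and is sound.
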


\begin{proof}
By \cite[Theorem 3.5]{HJ}, any support tilting subcategory $\overline \W$ can be lifted to a weak cluster tilting subcategory $\X$ such that $\overline \X=\overline \W$.

Now let $\X$ be a weak cluster tilting subcategory. Then $\X=\add(\W\cup\T)$ where $\W=\X_\C$ and $\T=\{C\in \C \text{ }|\text{ }\EE(C,\W)=0 \}$. A weak cluster tilting subcategory is obviously closed under direct sums and direct summands. As in the proof of Theorem \ref{main1}, we also have $\Ext^1_{\oB}(\overline \X,\overline \X)=0$ and $\Ext^2_{\oB}(\overline \X,\overline \X)=0$. $\overline \X$ is funtorially finite under our assumptions. Now let $0\neq Y\in \oB$ such that $\Ext^1_{\oB}(\overline \X, Y)=0$ and $Y$ is a subquotient of an object $X\in \overline \X$. Just as in the proof of Theorem \ref{main1}, we can get a commutative diagram of short exact sequences in $\oB$
$$\xymatrix{
&&0 \ar[d] &0 \ar[d]\\
0 \ar[r] & V \ar[r] \ar@{=}[d] & U \ar[r] \ar[d] & Y \ar[r] \ar[d]^{\overline y} &0\\
0 \ar[r] & V \ar[r] & X \ar[r]^{\overline x} \ar[d]^{\overline z_1} & T \ar[r] \ar[d]^{\overline z_2} &0\\
&& Z \ar@{=}[r] \ar[d] &Z \ar[d]\\
&&0 &0
}
$$
where $\overline z_1$ is a right $\overline \X$-approximation and $U\in \overline \X^{\bot_1}=\W^{\bot_1}\cap \B_\C$ (this equation is followed by \cite[Lemma 3.3]{HJ}).
Morphism $z_1$ admits a triangle $(\bigstar) \text{ } Z[-1]\xrightarrow{u} U'\to X\xrightarrow{z_1} Z$. By Corollary \ref{ses} we have a short exact sequence $0\to  U'\to  X\xrightarrow{\overline z_1}  Z\to 0$. Hence $U\simeq U'$ in $\oB$ and $u$ factors through $\C$. If we can show that $U'\in \T^{\bot_1}$, then $U\in \X^{\bot_1}=\X$, hence $ Y$ is a quotient of an object in $\overline \X$.

Let $C\in \T$ and $c:C\to Z$ be any morphism. Then $c$ factors through $z_1$ since $u[1]c=0$. Thus we have an exact sequence when applying $\Hom_{\B}(\T,-)$ to triangle $(\bigstar)$.
$$\Hom_{\B}(\T,\B)\xrightarrow{\Hom_{\B}(\T,-u[1])=0}\Hom_{\B}(\T,U'[1])\to \Hom_{\B}(\T,X[1])=0.$$
Hence $\Hom_{\B}(\T,U'[1])=0$ and $U'\in \T^{\bot_1}$.
\end{proof}
\medskip

\section{Main result II}

In this section, we assume $\B$ is $\EE$-finite and has an $ARS$-duality $(\tau,\eta)$ defined in \cite[Definition 3.4]{INP}, $\oB$ is hereditary and has finite length. We also assume $\B$ satisfies condition (WIC) (\cite[Condition 5.8]{NP}):

\begin{itemize}
\item If we have a deflation $h: A\xrightarrow{~f~} B\xrightarrow{~g~} C$, then $g$ is also a deflation.
\item If we have an inflation $h: A\xrightarrow{~f~} B\xrightarrow{~g~} C$, then $f$ is also an inflation.
\end{itemize}

Note that this condition automatically holds on triangulated categories and Krull-Schmidt exact categories. This condition is needed in the proofs of some results in \cite{LZII} which we will refer to.

\begin{defn}
Let $\W\subseteq \h_\C$. We call a subcategory $\X\subseteq \h$ a lifting (with respect to $\h$) of $\overline {\W}$ if $\overline {\W}=\overline {\X}$.
\end{defn}

\begin{lem}\label{lift0}
If $\W\subseteq \h_\C$ has a lifting $\X$ such that $\X$ is maximal $\C$-rigid, then $\X=\add (\W\cup \T)$, where $\T=\{C\in \C \text { }| \text{ } \EE(C,\W)=0\}$.
\end{lem}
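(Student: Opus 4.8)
The plan is to show the two inclusions $\X \subseteq \add(\W \cup \T)$ and $\add(\W \cup \T) \subseteq \X$ separately, exploiting that $\X$ is $\C$-rigid and contains $\mathcal P$, together with maximality. First I would analyze the structure of $\X$: since $\X \subseteq \h$ and any indecomposable object of $\B$ lies in $\K$ or in $\h$, and since $\h \cap \K = \C$, every indecomposable summand of an object of $\X$ either lies in $\C$ or lies in $\h_\C$. Writing $\X_\C$ for the subcategory generated by the summands in $\h_\C$, we have $\X = \add(\X_\C \cup (\X \cap \C))$. The key identification is $\X_\C = \W$: indeed $\overline{\X} = \overline{\W}$ and, since passing to $\oB = \B/\K$ kills precisely the objects of $\K$ (so two objects of $\h$ are identified in $\oB$ iff they are isomorphic, as $[\C](A,B)=[\K](A,B)$ on $\h$), the indecomposable summands of $\X$ lying in $\h_\C$ correspond bijectively — up to isomorphism — to those of $\W$, whence $\X_\C = \W$ after replacing by closure under summands. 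So it remains to show $\X \cap \C = \T$.

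For the inclusion $\X \cap \C \subseteq \T$: let $C \in \X \cap \C$. Since $\X$ is $\C$-rigid and contains $C$ and all of $\W = \X_\C$, I would use the characterization of $\C$-rigidity. For $W \in \W$ and $C \in \C$, we want $\EE(C, W) = 0$; the point is that the $\C$-rigidity condition $\overline{[\C]}(C \oplus W, \Sigma(C\oplus W)) = [\C](C\oplus W, \Sigma(C\oplus W))$, combined with Proposition \ref{imp} (using $\oB$ hereditary so $\id_{\oB}(W) \le 1$ by heredity, applied to the pair $(C, W)$ via the second part), forces $\Ext^1_{\oB}(\overline{C}, \overline{W}) $ to detect $\EE(C,W)$. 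Concretely, $\overline C = 0$ in $\oB$ is false in general since $C$ need not lie in $\K$ — but $C \in \C \subseteq \K$, so actually $\overline C = 0$. Hence I must instead argue directly: for $C \in \C$, apply Proposition \ref{imp}(2) or the $\EE$-triangle $\Omega X \to P \to X$ machinery to translate the vanishing $\overline{[\C]}(X, \Sigma C) = [\C](X, \Sigma C)$ for all relevant $X \in \X$ into $\EE(C, \W) = 0$. I would run this through the explicit isomorphism $\Ext^1_{\oB}(X, C) \simeq \underline{[\D]}(\Omega X, C)/\underline{[\D,p]}(\cdots)$ together with the fact that $\EE(X, C) = 0$ implies the Ext group vanishes, reversed via the $\C$-rigid hypothesis; combined with $\EE(\mathcal P, \W) = 0$ automatically, this yields $C \in \T$.

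For the reverse inclusion $\T \subseteq \X \cap \C$: this is exactly where maximality is used. Take any $C \in \T$, so $\EE(C, \W) = 0$ by definition of $\T$, and also $\EE(\W, C) = 0$ and $\EE(C, C) = 0$ since $C \in \C$ and $\C$ is rigid (so $\EE(\C, \C) = 0$). Thus $\add(\X \cup \add C)$ is still $\C$-rigid: one checks all the cross $\EE$-groups vanish and invokes the remark after Definition \ref{d1}, using $\oB$ hereditary to reduce $\C$-rigidity to $\Ext^1_{\oB}$-vanishing, which follows from $\EE$-vanishing via Proposition \ref{imp}. Since $\mathcal P \subseteq \X$ and $\add(\X \cap (\add C)) \subseteq \h$ is $\C$-rigid, maximality condition (c) gives $\add C \subseteq \X$, i.e. $C \in \X$; and $C \in \C$ by hypothesis, so $C \in \X \cap \C$. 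Combining the two inclusions with $\X = \add(\W \cup (\X \cap \C))$ gives $\X = \add(\W \cup \T)$.

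The main obstacle I anticipate is making the translation between the abstract $\C$-rigidity condition $\overline{[\C]}(-,\Sigma -) = [\C](-,\Sigma -)$ and the concrete statement $\EE(C, \W) = 0$ genuinely watertight — in particular handling the fact that objects of $\C \subseteq \K$ become zero in $\oB$, so one cannot simply cite "$\C$-rigid iff $\Ext^1_{\oB}$-vanishing" (that remark presupposes the objects live in $\h_\C$). The correct route is to apply Proposition \ref{imp} with $X \in \W \subseteq \h_\C$ in the \emph{first} slot and $Y = C$ in the second, using $\id_{\oB}(\overline{X}) \le 1$ (heredity) together with part (2) of that proposition, so that the relevant Ext group and hence the $\EE$-groups are computed intrinsically in $\B$; I would need to verify that the $\C$-rigidity of $\X \ni X, C$ precisely encodes the vanishing of the obstruction class coming from $\EE(C, X)$. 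Everything else is a bookkeeping exercise in closure under summands and the definition of $\T$.
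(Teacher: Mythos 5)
Your skeleton is the same as the paper's (split $\X$ into its $\h_\C$-part and its $\C$-part, identify the former with $\add\W$ by a Krull--Schmidt argument, show $\X\cap\C\subseteq\T$ from $\C$-rigidity, and get $\T\subseteq\X$ from maximality), but the step you yourself flag as ``the main obstacle'' is precisely the step that is missing, and your proposed route for it would fail. The paper closes this gap by citing \cite[Lemma 3.6]{LZII}, which says in effect that for $\W\subseteq\h_\C$ and a subcategory of $\C$ the additive closure of their union is $\C$-rigid if and only if $\W$ is $\C$-rigid and $\EE(\C_1,\W)=0$ for the $\C$-part $\C_1$; the paper applies this in both directions: first to deduce $\EE(X',\X)=0$ for an indecomposable $X'\in\X$ lying in $\C$ (giving $\X\subseteq\add(\W\cup\T)$), and then to conclude that $\add(\X\cup\T)$ is again $\C$-rigid (giving $\T\subseteq\X$ by maximality). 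You neither cite nor prove a statement of this kind.

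Your substitute -- Proposition \ref{imp} together with the remark after Definition \ref{d1} that $\C$-rigidity amounts to $\Ext^1_{\oB}$-vanishing -- cannot do the job, for the reason you half-notice but do not resolve: any $C\in\C\subseteq\K$ becomes zero in $\oB$, so every group $\Ext^1_{\oB}$ involving $\overline{C}$ vanishes trivially and carries no information about $\EE(C,\W)$; moreover Proposition \ref{imp}(2) requires $Y\in\h_\C$, so it cannot be applied with $Y=C\in\C$, and the ``Moreover'' clauses of that proposition only pass from $\EE$-vanishing to $\Ext^1_{\oB}$-vanishing, never back. If the reduction to $\Ext^1_{\oB}$ were valid for objects of $\C$, one could adjoin all of $\C$ to any $\C$-rigid subcategory and maximal $\C$-rigid subcategories would always contain $\C$, which is false in general ($\T$ is usually a proper subcategory of $\C$). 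The same defect undermines your argument for $\T\subseteq\X$, where in addition the claim $\EE(\W,C)=0$ ``since $\C$ is rigid'' is wrong (rigidity of $\C$ only gives $\EE(\C,\C)=0$); fortunately that claim is not needed, since what the paper's argument requires is only $\EE(\T,\X)=0$, which follows from the definition of $\T$ and $\EE(\C,\C)=0$ once the first inclusion is in place. What must be supplied, then, is a direct analysis of the $\C$-rigidity condition of Definition \ref{d1} for $X=W\oplus C$, reading off from the component $\Hom_{\B}(C,\Sigma W)$, modulo factorizations through $I\to\Sigma W$, the group $\EE(C,W)$ -- i.e.\ exactly the content of \cite[Lemma 3.6]{LZII}, which the paper simply invokes and you would have to prove.
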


\begin{proof}
Let $X'\in \X_\W$ be an indecomposable object. Then $X'\in \C$. By \cite[Lemma 3.6]{LZII}, $\EE(X',\X)=0$, hence $X'\in \T$ and $\X\subseteq \add (\W\cup \T)$.

Since $\EE(\T,\X)=0$, by \cite[Lemma 3.6]{LZII}, $\add (\X\cup \T)$ is also $\C$-rigid. But $\X$ is maximal $\C$-rigid, we have $\T\subseteq \X$. Hence $\X=\add (\W\cup \T)$.
\end{proof}

\begin{thm}\label{main2.0}
Let $\oB$ be hereditary and have finite length. Let $\W\subseteq \h_\C$ such that $\overline \W$ is a support tilting subcategory of $\oB$. Then $\overline \W$ has a lifting $\X$ such that $\X$ is maximal $\C$-rigid.
\end{thm}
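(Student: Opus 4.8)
The plan is to construct the lifting $\X$ explicitly as $\add(\W\cup\T)$ with $\T=\{C\in\C \mid \EE(C,\W)=0\}$, which is forced on us by Lemma \ref{lift0}, and then to verify the three defining conditions of a maximal $\C$-rigid subcategory: that $\X$ is $\C$-rigid, that $\mathcal P\subseteq\X$, and the maximality condition (c). The inclusion $\mathcal P\subseteq\X$ is immediate since $\mathcal P\subseteq\C$ and $\EE(\mathcal P,-)=0$ forces $\mathcal P\subseteq\T$. For $\C$-rigidity, using the Remark after Definition \ref{d1} (valid since $\oB$ is hereditary), it suffices to check $\Ext^1_{\oB}(\overline\X,\overline\X)=0$; but $\overline\X=\overline{\W}$ (the objects of $\T\setminus\W$ lie in $\C\subseteq\K$, hence vanish in $\oB$, whereas $\W\subseteq\h_\C$ survives), and $\overline{\W}$ is support tilting, so $\Ext^1_{\oB}(\overline\W,\overline\W)=0$. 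First, though, I would need to confirm that $\X=\add(\W\cup\T)\subseteq\h$ is genuinely $\C$-rigid in the sense of Definition \ref{d1}, i.e.\ that the $\Ext^1_{\oB}$ computation combined with Proposition \ref{imp} transfers back to the statement $\overline{[\C]}(X,\Sigma X)=[\C](X,\Sigma X)$ for each $X\in\X$; here I should invoke \cite[Lemma 3.6]{LZII} (already cited in Lemma \ref{lift0}), which packages the interaction of $\W$ with the $\C$-part $\T$, to conclude $\EE(\T,\X)=0$ and hence that the full $\add(\W\cup\T)$ is $\C$-rigid.

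The substance of the proof is condition (c): given $\X'\subseteq\h$ with $\add(\X\cap\X')\subseteq\h$ being $\C$-rigid — more precisely, given an indecomposable $X'\in\h$ such that $\add(\X\cup\{X'\})$ is $\C$-rigid, I must show $X'\in\X$. Split into cases according to whether $X'\in\C$ or $X'\in\h_\C$. If $X'\in\C$, then $\C$-rigidity of $\add(\X\cup\{X'\})$ together with \cite[Lemma 3.6]{LZII} should give $\EE(X',\W)=0$, so $X'\in\T\subseteq\X$ by definition. If $X'\in\h_\C$, then $\overline{X'}\neq 0$ and $\C$-rigidity gives $\Ext^1_{\oB}(\overline\W,\overline{X'})=0$ and $\Ext^1_{\oB}(\overline{X'},\overline\W)=0$. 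Now I use that $\overline\W$ is \emph{support tilting}: its last defining property says that any object $A$ which is a subquotient of an object of $\overline\W$ with $\Ext^1_{\oB}(\overline\W,A)=0$ must be a quotient of an object from $\overline\W$. Combined with hereditariness and the finite-length hypothesis, the standard Bongartz-type argument shows that a support tilting subcategory is maximal among rigid subcategories of $\oB$ with the relevant approximation property — so $\Ext^1_{\oB}(\overline\W\cup\{\overline{X'}\},\overline\W\cup\{\overline{X'}\})=0$ forces $\overline{X'}\in\overline\W$. Then $\overline{X'}=\overline{W}$ for some $W\in\W$, i.e.\ $X'$ and $W$ become isomorphic in $\oB$; since both lie in $\h_\C$ and the quotient functor $\B\to\oB$ reflects isomorphisms on $\h$ up to direct summands (via Remark \ref{useful}(b) and Lemma \ref{summand}, exactly as in the proof of Lemma \ref{heart}), $X'$ is a direct summand of $W$, hence $X'\in\add\W\subseteq\X$.

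The main obstacle I anticipate is the passage from "support tilting in $\oB$" to a genuine \emph{maximality} statement inside $\oB$ — i.e.\ proving that one cannot enlarge a support tilting subcategory while keeping $\Ext^1$ vanishing. This is where the finite-length hypothesis is essential: one takes an indecomposable $\overline{X'}\notin\overline\W$ with $\Ext^1_{\oB}(\overline\W,\overline{X'})=0=\Ext^1_{\oB}(\overline{X'},\overline\W)$, forms a Bongartz-type extension / uses the approximation sequence against $\overline\W$, and derives a contradiction with the subquotient-closure axiom of support tilting, descending on length. I would carry this out as a separate lemma about hereditary finite-length abelian categories before plugging it into the case analysis above. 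A secondary technical point to be careful about is functorial finiteness of $\overline\X$ (needed implicitly when forming approximations); since $\oB$ is hereditary, Krull--Schmidt, Hom-finite and of finite length, every subcategory closed under sums and summands is functorially finite, so this causes no trouble, but it should be stated. Finally, uniqueness of the lifting (should it be needed here as opposed to in the companion Theorem stated in the introduction) follows from Lemma \ref{lift0}: any maximal $\C$-rigid lifting of $\overline\W$ must equal $\add(\W\cup\T)$, so the $\X$ we constructed is the only one.
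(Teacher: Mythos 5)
Your construction of the candidate lifting $\X=\add(\W\cup\T)$ with $\T=\{C\in\C\mid\EE(C,\W)=0\}$, and your treatment of $\C$-rigidity, of $\mathcal P\subseteq\X$, and of the case of an indecomposable $X'\in\C$, all match the paper. The gap is exactly at the point you yourself flag as the main obstacle: your proposed lemma that ``a support tilting subcategory of a hereditary finite-length abelian category is maximal among rigid subcategories (with the relevant approximation property)'' is false, and no Bongartz-type argument can prove it. For instance, in $\mod k(1\to 2)$ the simple $S_1$ is support tilting, yet $S_1\oplus P_1$ is rigid and $P_1\notin\add S_1$; this is precisely why the notion is \emph{support} tilting: the subquotient-to-quotient axiom only controls objects lying in the support of $\overline\W$, so $\Ext^1_{\oB}$-vanishing in both directions between $\overline{X'}$ and $\overline\W$ cannot by itself force $\overline{X'}\in\overline\W$. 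Since your plan reduces the whole maximality condition (c) to this $\Ext^1_{\oB}$-level statement, the reduction collapses.

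What the paper does instead, and what is missing from your proposal, is to extract more from $\C$-rigidity in $\B$ than its image under Proposition \ref{imp}. Given an indecomposable $X\in\h_\C$ with $\add(\X\cup\add X)$ $\C$-rigid, one first shows that $X$ lies in the support of $\overline\W$: for any injective $\Sigma D_X$ of $\oB$ with $\Hom_{\oB}(X,\Sigma D_X)\neq 0$ one has $\EE(X,D_X)\neq 0$; writing $D_X=\tau\Omega C_X$ via the identification $\tau\Omega\C_{\mathcal P}=\D_{\mathcal I}$ (Lemmas \ref{proj} and \ref{tau1}, using the ARS-duality hypothesis), Auslander--Reiten duality gives $\EE(C_X,X)\neq 0$, so $C_X\notin\T$ (else $\C$-rigidity of the enlarged subcategory would be violated via \cite[Lemma 3.6]{LZII}), whence some $W'\in\W$ has $\Hom_{\oB}(W',\Sigma D_X)\neq 0$. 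By \cite[Lemma 3.4]{HJ} this support containment makes $X$ a subquotient of an object of $\overline\W$, and only now can the last axiom of support tilting be invoked: it yields $0\to Y\to W_X\to X\to 0$, a second application to $Y$ together with hereditariness gives $\Ext^1_{\oB}(X,Y)=0$, the sequence splits, and $X\in\W$. So the missing idea is this descent through the Auslander--Reiten translate back in $\B$ to establish the support condition; without it, the case $X'\in\h_\C$ of your maximality argument does not go through.
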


To show this theorem, the following lemmas are needed.

\begin{lem}\label{proj}
Let $\xymatrix{X\ar[r]^{f} &Y \ar[r]^{g} &Z \ar@{-->}[r] &}$ be an almost split extension in $\B$. Then $X\in \D$ if and only if $Z$ is projective in $\oB$.
\end{lem}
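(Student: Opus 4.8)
The plan is to analyze the almost split extension $\xymatrix{X\ar[r]^{f} &Y \ar[r]^{g} &Z \ar@{-->}[r] &}$ through the cohomological quotient functor $\pi\colon \B\to \oB$ and to use the characterization of projectives in $\oB$ as objects of $\underline{\Omega\C}$, together with the twin cotorsion pair structure. Since $X\to Y\to Z\dashrightarrow$ is almost split, $Z$ is indecomposable, hence (being in $\B=\add(\h_\C\cup\K)$) either $Z\in\K$ or $Z\in\h_\C$. If $Z\in\K$ then $\overline Z=0$ is trivially projective in $\oB$; on the other hand applying the cotorsion pair $(\C,\K)$ one sees the extension $X\to Y\to Z\dashrightarrow$ splits only if $Z\in\C\cap\K$, and one checks in this degenerate case that $X\in\D$ as well (in fact $X$ is a summand of something in $\D$ via the $\EE$-triangles defining the pair $(\K,\D)$), so the equivalence holds. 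The substantive case is $Z\in\h_\C$, which I treat next.

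For $Z\in\h_\C$, I would first prove the ``if'' direction. Suppose $\overline Z$ is projective in $\oB$, i.e. $\overline Z\in\underline{\Omega\C}$. Since $Z$ is indecomposable and not in $\K$, $\overline Z\neq 0$, so $\overline Z$ is (up to adding projectives) of the form $\underline{\Omega C}$ for some $C\in\C$; thus $Z$ admits an $\EE$-triangle $Z\to P\to C\dashrightarrow$ with $P\in\mathcal P$, $C\in\C$. Now I compare this with the almost split extension ending at $Z$. Because $P\in\mathcal P\subseteq\K$ and $C\in\C$, the deflation $P\to C$ is not a split epimorphism onto $Z$... — more precisely, I pull the almost split property against the $\EE$-triangle $Z\to P\to C\dashrightarrow$ after forming its rotation $\Omega C\to P\to C$ — wait, I want the extension \emph{ending} at $Z$, so I should use that $Z=\Omega C$ sits in $\Omega C\xrightarrow{} P\xrightarrow{} C\dashrightarrow$ and instead analyze the connecting behaviour: the point is that $X\to Y\to Z\dashrightarrow$ realizes a generator of the socle of $\EE(Z,-)$ as a functor, and one computes this using Proposition \ref{imp}(2): since $Z\in\h_\C$ with $\pd_{\oB}(Z)\le 1$ automatically (as $\overline Z$ is projective), $\EE(Z,X)$ is controlled by $\underline{[\D]}(\Omega Z, X)$, and almost-splitness forces $X\in\D$ via the $ARS$-duality $(\tau,\eta)$: the source $X$ of an almost split extension with target $Z$ is $\tau Z$, and one identifies $\tau(\underline{\Omega\C})$ with $\D$ (or its relevant part) using \cite[Definition 3.4]{INP} and the dual of the proposition computing $\oB\simeq\mod\underline\C$.

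The ``only if'' direction is the mirror image: assuming $X\in\D$, I want to show $\overline Z$ is projective in $\oB$, equivalently $\pd_{\oB}(\overline Z)\le 0$. Using Proposition \ref{imp2}, $\pd_{\oB}(Z)\le 1$ is witnessed by the diagram $(\maltese)$, and $X\in\D$ together with $\EE(\C,\D)=0$ (from the cotorsion pair $(\K,\D)$, noting $\C\subseteq\K$) kills the relevant $\EE$-group, so the map $a\colon\Omega Z\to\Omega C^1$ factors through $\K$; then I must upgrade ``$\pd_{\oB}(Z)\le 1$'' to ``$\pd_{\oB}(Z)\le 0$'' using that the \emph{almost split} extension starting at $X\in\D$ cannot be ``essential'' in a way that produces a genuine length-one resolution — concretely, if $\overline Z$ were not projective, the minimal projective presentation $0\to\overline{\Omega C^1}\to\overline{\Omega C^2}\to\overline Z\to 0$ would be non-split, and pushing out the almost split extension along $f\colon X\to$ (a suitable object) would contradict minimality/almost-splitness, forcing $\overline{\Omega C^1}=0$, i.e. $\overline Z\in\underline{\Omega\C}$ is projective. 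The main obstacle I anticipate is exactly this last implication: cleanly extracting projectivity of $\overline Z$ from $X\in\D$ requires matching the almost split extension in $\B$ with the Auslander–Reiten structure of the abelian category $\oB$ (where projectives have no Auslander–Reiten sequence ending at them), and making the $ARS$-duality $(\tau,\eta)$ do the bookkeeping correctly between $\D$ and $\underline{\Omega\C}$; I would isolate this as a small sublemma identifying, for $Z\in\h_\C$, when $\tau Z\in\D$, and prove it by a direct diagram chase with the defining $\EE$-triangles of the twin cotorsion pair.
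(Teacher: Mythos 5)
Your proposal is a plan rather than a proof: both decisive implications are left unestablished. For the direction ``$X\in\D\Rightarrow Z$ projective in $\oB$'' you first assert that $X\in\D$ forces the comparison map $a\colon\Omega Z\to\Omega C^1$ in $(\maltese)$ to factor through $\K$ --- but nothing you wrote relates $a$ to $X$ at all --- and then you explicitly defer the crucial upgrade from $\pd_{\oB}(Z)\le 1$ to projectivity to an unproven ``sublemma'', with only the vague suggestion that a pushout ``would contradict minimality/almost-splitness''. The paper's argument is different and completely elementary: if $\overline Z$ were non-projective one takes the non-split presentation $\Omega C^1\xrightarrow{b}\Omega C^2\to Z$, lifts the (non split epi) deflation $\Omega C^2\to Z$ through the right almost split deflation $g$, observes that the induced map $c\colon\Omega C^1\to X$ lands in $\D\subseteq\K$ while every map $\Omega C^1\to\K$ factors through $b$ (because $\EE(C^1,\K)=0$), and concludes via Remark \ref{useful}(a) that $1_Z$ factors through $g$, a contradiction; nothing equivalent appears in your sketch. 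For the converse the paper tests $X$ against an arbitrary non-split $\EE$-triangle $X\to B\to K$ with $K\in\K$ and uses left almost splitness of $f$ together with the fact that, $Z$ being in $\Omega\C$, the map $Z\to K$ factors through $\mathcal P$ and hence through $B\to K$, forcing $1_X$ through $f$. Your route instead outsources this direction to the identification of $\tau(\Omega\C)$ with $\D$, which is precisely Lemma \ref{tau1}, proved later in the paper \emph{using} Lemma \ref{proj}; the inclusion you actually need ($\tau\Omega\C_{\mathcal P}\subseteq\D$) can indeed be obtained independently from the ARS-duality, since $\uHom(\Omega\C,\K)=0$, but you neither prove it nor note the circularity issue. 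Moreover your appeal to Proposition \ref{imp}(2) is off target: that proposition computes $\Ext^1_{\oB}$, not $\EE$, and it requires the second argument to lie in $\h_\C$ with $\id_{\oB}\le 1$, whereas the object you feed it is expected to lie in $\K$, where the statement says nothing.

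The degenerate case is also handled incorrectly. If $Z\in\K$ then indeed $\overline Z=0$ is projective in $\oB$, but your claim that ``one checks $X\in\D$ as well'' is false: an almost split extension is non-split, while $X\in\D$ and $Z\in\K$ would give $\EE(Z,X)\subseteq\EE(\K,\D)=0$ and force it to split. This incompatibility is exactly how the paper's forward direction rules out $Z\in\K$ when $X\in\D$; in the converse direction the statement is read with $Z\in\Omega\C$, i.e.\ a genuine (nonzero) projective of $\oB$, so the case does not enter. Your side remark that the extension ``splits only if $Z\in\C\cap\K$'' is likewise confused, since almost split extensions never split.
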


\begin{proof}
If $X\in \D$, then $Z\in \h_\C$, otherwise the $\EE$-triangle splits. If $Z$ is non-projective in $\oB$, we get a non-split $\EE$-triangle $\xymatrix{\Omega C^1\ar[r]^b &\Omega C^2\ar[r] &Z \ar@{-->}[r] &}$ where $\Omega C^i\in \Omega \C$ and $\Hom_\B(b,K)$ is an epimorphism if $K\in \K$. Then we have the following commutative diagram
$$\xymatrix{
\Omega C^1\ar[r]^b \ar[d]_c &\Omega C^2\ar[r] \ar[d] &Z \ar@{=}[d] \ar@{-->}[r] &\\
X \ar[r]_f &Y \ar[r]_g &Z \ar@{-->}[r] &.
}
$$
Since $X\in \K$, $c$ factors through $b$, hence $1_Z$ factors through $g$, a contradiction.\\
Now assume we have a non-split $\EE$-triangle $\xymatrix{X\ar[r]^x &B \ar[r] &K \ar@{-->}[r] &}$ where $K\in \K$, then we get the following commutative diagram
$$\xymatrix{
X \ar[r]^f \ar@{=}[d] &Y \ar[d] \ar[r]^g &Z  \ar[d]^z \ar@{-->}[r] &\\
X\ar[r]^x &B \ar[r]^k &K \ar@{-->}[r] &.
}
$$
Since $Z\in \Omega \C$, $z$ factors through $\mathcal P$, then $z$ factors through $k$, which implies $1_X$ factors through $f$, hence $f$ is a section, a contradiction. Hence $X\in \K^{\bot_1}=\D$.
\end{proof}

\begin{lem}\label{tau1}
$\tau\Omega \C_{\mathcal P}=\D_{\mathcal I}$.
\end{lem}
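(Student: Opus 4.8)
The plan is to reduce the identity to a statement about indecomposables and then use the ARS-duality together with Lemma~\ref{proj}. Recall that $\tau$ induces a bijection between the isomorphism classes of indecomposable non-projective objects of $\B$ and those of indecomposable non-injective objects, sending an indecomposable non-projective $Z$ to the source $\tau Z$ of the almost split extension $\tau Z\to Y\to Z\dashrightarrow$ ending at $Z$. Both $\Omega\C_{\mathcal P}$ and $\D_{\mathcal I}$ are closed under direct summands: $\D$ is half of a cotorsion pair, and $\Omega\C=\CoCone(\mathcal P,\C)$ is closed under summands by the same argument as in Lemma~\ref{summand} (here one uses $\mathcal P\subseteq\C$ to see that the relevant cone again lies in $\C$). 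Moreover the indecomposable objects of $\Omega\C_{\mathcal P}$ are precisely the indecomposable non-projective objects contained in $\Omega\C$, and dually for $\D_{\mathcal I}$. Hence it suffices to prove: for an indecomposable non-projective object $M$ of $\B$, one has $M\in\Omega\C$ if and only if $\tau M\in\D$.

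Fix such an $M$ and the almost split extension $\tau M\to Y\to M\dashrightarrow$. By Lemma~\ref{proj}, $\tau M\in\D$ holds if and only if $M$ is a projective object of $\oB$. So I am reduced to showing that $M\in\Omega\C$ if and only if $\overline M$ is projective in $\oB$. The implication ``$\Rightarrow$'' is immediate, since $\underline{\Omega\C}$ is the subcategory of enough projectives of $\oB$, so every object of $\Omega\C$ is already a projective object of $\oB$.

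For ``$\Leftarrow$'', assume $\overline M$ is projective in $\oB$. First rule out $M\in\K$: by the equivalence just noted, $\tau M\in\D$; since $(\K,\D)$ is a cotorsion pair, $\EE(\K,\D)=0$, so if $M\in\K$ then $\EE(M,\tau M)=0$, contradicting that the almost split extension is non-split. Hence $M\notin\K$, and by the definition of a fully rigid subcategory (every indecomposable lies in $\K$ or in $\h$) together with $\C\subseteq\K$ we get $M\in\h_\C$; in particular $\overline M\neq 0$. As $M$ is indecomposable with $M\notin\K$, the ring $\End_{\oB}(M)=\End_\B(M)/[\K](M,M)$ is a non-zero quotient of a local ring, hence local, so $\overline M$ is an indecomposable projective object of $\oB$. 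Using that $\oB$ is Krull-Schmidt, that $\underline{\Omega\C}$ is its enough projectives, and that $\Omega\C$ is closed under summands, every indecomposable projective object of $\oB$ is isomorphic to $\overline{M'}$ for some indecomposable $M'\in\Omega\C$ with $M'\notin\K$, that is $M'\in\h_\C$. Restricting to $\h$, which is a full subcategory of $\oB$, the isomorphism $\overline M\cong\overline{M'}$ is an isomorphism in $\h/\C$. Since $M$ and $M'$ are indecomposable objects of $\h_\C$, the ideal of morphisms factoring through $\C$ lies in the radicals of $\End_\B(M)$ and of $\End_\B(M')$ (otherwise $M$ or $M'$ would be a direct summand of an object of $\C$), so this isomorphism lifts to an isomorphism $M\cong M'$ in $\B$; therefore $M\in\Omega\C$, as desired.

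The passage to indecomposables, the summand-closedness of $\Omega\C$, and the forward implication are routine. The main obstacle is the converse implication in the second step, i.e. recognizing projectivity in $\oB$ as membership in $\Omega\C$: this needs the identification of the indecomposable projectives of $\oB$ with the non-$\K$ indecomposable objects of $\Omega\C$ (via $\oB\simeq\mod\underline\C$ and the enough-projectives structure) and the lifting-of-isomorphisms argument, and it is exactly the degenerate possibility $M\in\K$ that forces the detour through Lemma~\ref{proj} and the identity $\EE(\K,\D)=0$ rather than allowing a direct argument.
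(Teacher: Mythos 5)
Your overall route is different from the paper's and much of it is sound: the reduction to indecomposables, the summand-closedness of $\Omega\C$, and the Krull-Schmidt lifting argument identifying nonzero indecomposable projectives of $\oB$ with indecomposables of $\Omega\C$ are correct, and that last step actually supplies a justification the paper's own proof of $\D_{\mathcal I}\subseteq\tau\Omega\C_{\mathcal P}$ leaves implicit (it passes from ``$X$ is projective in $\oB$'' to ``$X\in\Omega\C_{\mathcal P}$'' without comment). Your forward inclusion via Lemma \ref{proj} is also a legitimate substitute for the paper's direct ARS-duality computation $\mathbb{D}\EE(\K,\tau\Omega C_X)\simeq\Hom_{\uB}(\Omega C_X,\K)=0$.

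There is, however, a genuine flaw in how you treat the case $M\in\K$. The intermediate statement you reduce to --- for $M$ indecomposable non-projective, $M\in\Omega\C$ if and only if $\overline M$ is projective in $\oB$ --- is false if the zero object counts as projective: any indecomposable non-projective object of $\K$ (such objects exist in Example \ref{ex1}, e.g.\ any non-projective injective, since $\mathcal I\subseteq\K$) has $\overline M=0$ projective, yet $M\notin\Omega\C$, because $\Omega\C\cap\K\subseteq\mathcal P$ (the defining $\EE$-triangle $M\to P\to C$ splits once $\EE(C,M)=0$). Your way of ruling out $M\in\K$ applies the ``if'' direction of Lemma \ref{proj} precisely in this degenerate case, i.e.\ to an end term whose image in $\oB$ is zero; but the proof of that direction of Lemma \ref{proj} really uses the hypothesis $Z\in\Omega\C$ (equivalently, $\overline Z$ a nonzero projective), and in the degenerate case the implication you quote cannot hold: it would give $\tau M\in\D$ for every indecomposable non-projective $M\in\K$, which together with $\EE(\K,\D)=0$ and the non-splitness of the almost split extension would force $\K=\mathcal P$ --- false in Example \ref{ex1}. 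So the contradiction you derive comes from an illegitimate reading of the lemma, not from the hypotheses. The repair is easy and keeps your architecture: prove ``$\tau M\in\D\Rightarrow M\in\Omega\C$'' directly, noting that $\tau M\in\D$ and $\EE(\K,\D)=0$ already rule out $M\in\K$ without any appeal to Lemma \ref{proj}; then the ``only if'' direction of Lemma \ref{proj} gives that $\overline M$ is a nonzero indecomposable projective of $\oB$, and your lifting argument finishes. (In the forward direction you should likewise observe $\Omega\C\cap\K\subseteq\mathcal P$, so $\overline M\neq 0$, before invoking Lemma \ref{proj}.)
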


\begin{proof}
Let $X\in \tau\Omega \C_{\mathcal P}$. Then $X=\tau\Omega C_X$ where $C_X\in \C_{\mathcal P}$. Since $\mathbb{D} \EE(\K,X)\simeq \Hom_{\uB}(\Omega C_X, \K)=0$, we get $X\in \D_{\mathcal I}$. Hence $\tau\Omega \C_{\mathcal P}\subseteq \D_{\mathcal I}$.

Let $Y\in \D_{\mathcal I}$ be indecomposable, by Lemma \ref{proj}, we have the following two $\EE$-triangles
$$\xymatrix{Y \ar[r] &Z \ar[r] &X \ar@{-->}[r] &} \text{ and } \xymatrix{X \ar[r] &P \ar[r] &C \ar@{-->}[r]&}$$
where $X\in \Omega \C_{\mathcal P}$, $P\in \mathcal P$ and $C\in \C_{\mathcal P}$. This implies $\tau\Omega \C_{\mathcal P}\supseteq \D_{\mathcal I}$.
Hence $\tau\Omega \C_{\mathcal P}=\D_{\mathcal I}$.
\end{proof}

{\bf Now we give the proof of Theorem \ref{main2.0}.}

\begin{proof}
By Lemma \ref{lift0}, we only need to show that $\X'=\add (\W\cup \T)$ is maximal $\C$-rigid, where $\T=\{C\in \C \text { }| \text{ } \EE(C,\W)=0\}$.

Since $\Ext^1_{\oB}(\overline \W,\overline \W)$, by Proposition \ref{imp}, $\W$ is $\C$-rigid. Since $\EE(\T,\W)=0$, we get $\X'$ is also $\C$-rigid. We show it is maximal $\C$-rigid.

For a subcategory $\C_1\subseteq \C$, if $\add(\X'\cup \C_1)$ is $\C$-rigid, then we have $\EE(\C_1,\X')=0$. Then $\C_1\subseteq \T$.

Let $\X''\subseteq \h_\C$ such that $\add(\X'\cup \X'')$ is $\C$-rigid. By Proposition \ref{imp} we have $\Ext^1_{\oB}(\overline \W,\overline \X'')=0$. For an object $X\in \X''$, let $\Sigma D_X$ be an injective object of $\oB$ such that $\Hom_{\oB}(X,\Sigma D_X)\neq 0$. We can get $0\neq \Hom_{\B/\mathcal I}(X,\Sigma D_X)\simeq \EE(X,D_X)$. By Proposition \ref{tau1}, there is an object $C_X\in \C$ such that $\tau \Omega C_X=D_X$. Hence $0\neq \EE(X,D_X)=\EE(X,\tau\Omega C_X)\simeq  \mathbb{D} \Hom_{\uB}(\Omega C_X, X)\simeq  \mathbb{D} \EE(C_X, X)$. Then $C_X\notin \T$. Hence there is an object $W'\in \W$ such that $\EE(C_X,W')\neq 0$ and we have shown:
$$\Hom_{\oB}(X,\Sigma D_X)\neq 0\text{ }\Rightarrow \text{ there is an object } W'\in\W \text{ such that } \Hom_{\oB}(W',\Sigma D_X)\neq 0.$$
It follows from \cite[Lemma 3.4]{HJ} that $X$ is a subquotient of an object from $\overline \W$. But $\Ext^1_{\oB}(\overline \W, X)=0$, then $X$ is a quotient of an object of $\overline \W$ since $\overline \W$ is support tilting.
We have a short exact sequence $0\to Y\to W_X\to X\to 0$ in $\oB$. The long exact Ext sequence implies that $\Ext^1_{\oB}(\overline \W,Y)=0$. Since $Y$ is a subquotient of $ W_X$, then $Y$ is also a quotient of an object from $\overline \W$. Hence we have a short exact sequence $0\to Y'\to W_Y\to  Y\to 0$ in $\oB$. We have $\Ext^1_{\oB}(X, W_Y)=0$. Since $\oB$ is hereditary, we have $\Ext^2_{\oB}(X,Y')=0$, hence $\Ext^1_{\oB}(X,Y)=0$. Now the first short exact sequence splits, which implies $X$ is a direct summand of $W_X$. Since $\overline \W$ is closed under direct summands and $X$ is indecomposable, we have $X\in \W$.
Hence $\X$ is maximal $\C$-rigid.\\
\end{proof}




Let $B$ be an object in $\B$. We designate $\overline B^{\bot_1}$ (resp. $\overline B^{\bot}$) by the subcategory $\{B'\in B \text{ }|\text{ }\Ext^1_{\oB}(B,B')=0 \}$ (resp. $\{B'\in B \text{ }|\text{ }\Hom_{\oB}(B,B')=0 \}$) in $\oB$.

\begin{prop}\label{ori}
Assume $\C=\add C$ such that $C$ is basic. There is a one-to-one correspondence between the following classes of subcategories:
\begin{itemize}
\item[(a)] Maximal $\C$-rigid subcategories $\X=\add X\subseteq \h$.
\item[(b)] Support tilting subcategories $\overline \W$ of $\oB$ such that $\W=\add W\subseteq \h_\C$.
\end{itemize}
\end{prop}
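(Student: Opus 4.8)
The plan is to produce a pair of mutually inverse maps between the two classes. In one direction, send a support tilting subcategory $\overline{\W}$ with $\W=\add W\subseteq\h_\C$ to the maximal $\C$-rigid lifting $\X$ guaranteed by Theorem \ref{main2.0}; by Lemma \ref{lift0} this lifting is \emph{uniquely} $\X=\add(\W\cup\T)$ with $\T=\{C\in\C\mid\EE(C,\W)=0\}$, and since $\T$ is closed under direct summands and contained in $\add C$ with $C$ basic, we get $\T=\add C'$ for a direct summand $C'$ of $C$; hence $\X=\add(W\oplus C')=\add X$ for a single object $X$. In the other direction, send a maximal $\C$-rigid $\X=\add X\subseteq\h$ to $\overline{\W}$, where $\W:=\X_\C$ is the full subcategory of summands of $X$ having no nonzero summand in $\C$; then $\W=\add W\subseteq\h_\C$, and since $\C\subseteq\K$ we have $\overline{\X}=\overline{\W}$.

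Granting for the moment that $\overline{\W}=\overline{\X}$ is support tilting whenever $\X$ is maximal $\C$-rigid, these two maps are mutually inverse: starting from a support tilting $\overline{\W}$, the maximal $\C$-rigid lifting $\X=\add(\W\cup\T)$ has $\X_\C=\W$ (as $\T\subseteq\C$ and $\W\subseteq\h_\C$), so $\overline{\W}$ is recovered; starting from a maximal $\C$-rigid $\X=\add X$ with $\W=\X_\C$, Lemma \ref{lift0} applied to $\X$ itself—which is a maximal $\C$-rigid lifting of the (support tilting) $\overline{\W}$—gives $\X=\add(\W\cup\{C\in\C\mid\EE(C,\W)=0\})$, which is exactly the image of $\overline{\W}$ under the first map. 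This yields the asserted bijection.

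It thus remains to show: if $\X=\add X$ is maximal $\C$-rigid and $\W=\X_\C$, then $\overline{\W}$ is support tilting in $\oB$. The ``formal'' axioms are immediate: $\overline{\W}=\add\overline{W}$ is closed under direct sums and summands and is functorially finite in $\oB$ (both because it is the additive closure of a single object and because $\oB$ is Hom-finite, Krull--Schmidt and of finite length); $\Ext^2_{\oB}(\overline{\W},-)=0$ since $\oB$ is hereditary; and $\Ext^1_{\oB}(\overline{\W},\overline{\W})=0$ by the Remark following Definition \ref{d1}, since $\X$, hence $\W$, is $\C$-rigid. The remaining, and essential, point is the subquotient axiom: given $0\neq A\in\oB$ which is a subquotient of an object of $\overline{\W}$ and satisfies $\Ext^1_{\oB}(\overline{\W},A)=0$, one must show $A\in\Fac\overline{\W}$. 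I would first lift $A$ to an object of $\h_\C$ and ask whether $\add(\X\cup\add A)\subseteq\h$ is $\C$-rigid; if so, maximality of $\X$ forces $A\in\X_\C=\W$, hence $A\in\Fac\overline{\W}$ trivially. In the remaining case I would reuse the machinery from the last part of the proof of Theorem \ref{main2.0}: using the $ARS$-duality together with Lemma \ref{tau1}, translate the nonvanishing of $\Hom_{\oB}(A,\Sigma D_A)$ for an indecomposable injective $\Sigma D_A$ of $\oB$, with $D_A=\tau\Omega C_A$ for some $C_A\in\C$, into $\EE(C_A,A)\neq 0$; then, since $A$ is a subquotient of an object of $\overline{\W}$ so that its injective support in $\oB$ is dominated by that of $\overline{W}$, obtain $\Hom_{\oB}(W',\Sigma D_A)\neq 0$ for some $W'\in\W$; apply \cite[Lemma 3.4]{HJ}; and use that $\oB$ is hereditary to propagate the vanishing of $\Ext^1_{\oB}(\overline{\W},-)$ along short exact sequences (exactly the bookkeeping at the end of the proof of Theorem \ref{main2.0}) so as to build a short exact sequence $0\to Y\to W_A\to A\to 0$ in $\oB$ with $W_A\in\overline{\W}$.

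The main obstacle is precisely this subquotient axiom. It is the only step where maximality of $\X$ alone is insufficient: unlike in Theorem \ref{main2.0} we cannot yet invoke ``$\overline{\W}$ is support tilting'', and the naive attempt to enlarge $\X$ by $A$ fails because $\add(\X\cup\add A)$ need not be $\C$-rigid—an indecomposable $A$ may have $\Ext^1_{\oB}(A,A)\neq 0$ even when $\Ext^1_{\oB}(\overline{\W},A)=0$—so the $ARS$-duality input genuinely has to be brought in, possibly combined with an induction on the length of $A$ in $\oB$.
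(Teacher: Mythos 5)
Your bijection bookkeeping (lifting via Theorem \ref{main2.0}, uniqueness of the lifting from Lemma \ref{lift0}, passing back via $\W=\X_\C$, and the easy support-tilting axioms) is fine, but the essential direction --- $\X=\add X$ maximal $\C$-rigid implies $\overline{\W}$ support tilting --- is left with a genuine gap, and the route you sketch for it cannot work as stated. The ``bookkeeping at the end of the proof of Theorem \ref{main2.0}'' that you propose to reuse hinges on the step ``$\Ext^1_{\oB}(\overline{\W},A)=0$ and $A$ a subquotient of $\overline{\W}$ imply $A$ is a quotient of an object of $\overline{\W}$, \emph{since $\overline{\W}$ is support tilting}'': that is exactly the subquotient axiom you are trying to establish, so the argument is circular. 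Moreover your ARS-duality step only feeds into \cite[Lemma 3.4]{HJ}, which returns the statement that $A$ is a subquotient of an object of $\overline{\W}$ --- already a hypothesis --- and gives no mechanism for producing an epimorphism $W_A\to A$. You acknowledge this obstacle yourself (and the suggested induction on length is not carried out), so the proposal does not prove the proposition.

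The missing idea, which is how the paper closes this direction, is to not verify the subquotient axiom directly but to invoke Theorem \ref{main1}: since $\X=\add X$ is functorially finite, closed under sums and summands, and contains $\mathcal P$, it suffices to show $\overline{\X^{\bot_1}}\subseteq \Fac\overline{\X}$. Writing $X=W\oplus C_X$ with $W\in\h_\C$ and $C_X\in\C$, one uses \cite[Theorem 3.11]{LZII} to see that $(W,\Omega C_X)$ is a support $\tau_{\oB}$-tilting pair in $\oB$, and then \cite[Corollary 2.13]{AIR} to get $\overline{W}^{\bot_1}\cap \overline{\Omega C_X}^{\bot}=\Fac\overline{\X}$; since $\overline{\Omega C_X}^{\bot}=\overline{C_X^{\bot_1}}$, one obtains
$$\overline{\X^{\bot_1}}=\overline{W^{\bot_1}}\cap\overline{C_X^{\bot_1}}\subseteq \overline{W}^{\bot_1}\cap\overline{\Omega C_X}^{\bot}=\Fac\overline{\X},$$
which is precisely the hypothesis of Theorem \ref{main1}. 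In other words, the hard step is delegated to the $\tau$-tilting machinery (the maximal $\C$-rigid versus support $\tau$-tilting correspondence of \cite{LZII} together with the Adachi--Iyama--Reiten characterization of $\Fac$), none of which appears in your proposal; without some substitute for this input, your argument does not go through.
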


\begin{proof}
By Theorem \ref{main2.0}, any support tilting object $W\in\oB$ has a lifting $\X=\add X$ which is maximal $\C$-rigid. Now let $X=W\oplus C_X$, where $W\in \h_\C$ and $C_X\in \C$, $\X=\add X$ is maximal $\C$-rigid. We show that $W$ is a support tilting object in $\oB$. By our assumption, $\X$ is a funtorially finite subcategory and $\mathcal P\subseteq \X$. By Theorem \ref{main1}, it is enough to show that $\overline {\X^{\bot_1}}\subseteq \Fac \overline \X$.\\
By \cite[Theorem 3.11]{LZII}, $(W,\Omega C_X)$ is a support $\tau_{\oB}$-tilting pair in $\oB$. By \cite[Corollary 2.13]{AIR}, we have $\overline W^{\bot_1}\cap \overline {\Omega C_X}^{\bot}=\Fac \overline \X$. But $\overline {\Omega C_X}^{\bot}=\overline {C_X^{\bot_1}}$. Hence we have
$$\overline {\X^{\bot_1}}=\overline {W^{\bot_1}}\cap \overline {C_X^{\bot_1}}\subseteq \overline W^{\bot_1}\cap \overline {\Omega C_X}^{\bot}=\Fac \overline \X.$$
\end{proof}

By Proposition \ref{ori} and \cite[Theorem 3.11]{LZII}, we get the following corollary.

\begin{cor}
Assume $\C=\add C$ such that $C$ is basic. An object $W\in \oB$ is support tilting if and only if it admits a support $\tau_{\oB}$-tilting pair $(W, \Omega C_X)$ in $\oB$, where $\add(W\oplus \C_X)$ is the lifting of $W$ given by Theorem \ref{main2.0}.
\end{cor}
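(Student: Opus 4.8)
The plan is to assemble the corollary directly from the two results it cites, treating it essentially as a reformulation of Proposition \ref{ori} and \cite[Theorem 3.11]{LZII} in a common language. First I would recall the setup: under the standing assumption $\C = \add C$ with $C$ basic, Proposition \ref{ori} gives a one-to-one correspondence between maximal $\C$-rigid subcategories $\X = \add X \subseteq \h$ and support tilting subcategories $\overline{\W}$ of $\oB$ with $\W = \add W \subseteq \h_\C$; moreover the proof of that proposition shows explicitly how the two sides are matched, namely by writing $X = W \oplus C_X$ with $W \in \h_\C$ and $C_X \in \C$, and conversely by lifting a support tilting $\overline{\W}$ via Theorem \ref{main2.0} to $\X = \add(W \oplus C_X)$. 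So the content to verify is just the statement that ``$W$ is support tilting'' is equivalent to ``$(W, \Omega C_X)$ is a support $\tau_{\oB}$-tilting pair'', with $\add(W \oplus C_X)$ the Theorem \ref{main2.0} lifting.

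For the forward direction, suppose $W \in \oB$ is support tilting. By Theorem \ref{main2.0} it lifts to a maximal $\C$-rigid $\X = \add(W \oplus C_X)$, and then \cite[Theorem 3.11]{LZII} applies to this maximal $\C$-rigid subcategory to produce the support $\tau_{\oB}$-tilting pair $(W, \Omega C_X)$ in $\oB$ — this is exactly the implication already used inside the proof of Proposition \ref{ori}. For the converse, suppose $W$ admits a support $\tau_{\oB}$-tilting pair $(W, \Omega C_X)$ with $\add(W \oplus C_X)$ the lifting of $W$. Then $\X := \add(W \oplus C_X)$ is maximal $\C$-rigid: by \cite[Theorem 3.11]{LZII} the existence of the support $\tau_{\oB}$-tilting pair on the $\oB$ side corresponds precisely to maximal $\C$-rigidity of $\X$ on the $\B$ side. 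Now Proposition \ref{ori} (the direction from maximal $\C$-rigid subcategories to support tilting subcategories) tells us that the image $\overline{\X_\C}$, i.e. $\overline{\W}$, is support tilting in $\oB$, which is what we want.

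I would then close by noting that the correspondence is genuinely a bijection in the sense made precise by Proposition \ref{ori}, so no ambiguity arises in the phrase ``the lifting of $W$ given by Theorem \ref{main2.0}'': that lifting is unique, and the $\Omega C_X$ appearing in the support $\tau_{\oB}$-tilting pair is determined by it. The only mild subtlety — and the step I expect to need the most care — is making sure the passage through \cite[Theorem 3.11]{LZII} is stated in the right direction: that result is an ``if and only if'' between maximal $\C$-rigidity of $\add(W \oplus C_X)$ in $\h$ and $(W, \Omega C_X)$ being a support $\tau_{\oB}$-tilting pair in $\oB$, and one must confirm that the matching of the data $C_X \leftrightarrow \Omega C_X$ is the same matching used in Proposition \ref{ori}. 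Once that compatibility is in hand, the corollary follows formally by composing the two bijections, with essentially no additional computation.
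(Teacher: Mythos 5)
Your proposal is correct and takes essentially the same route as the paper, which obtains the corollary simply by combining Proposition \ref{ori} with the bijection of \cite[Theorem 3.11]{LZII}: your forward direction reproduces the step already contained in the proof of Proposition \ref{ori}, and your converse just runs the same two results in the other direction. The compatibility you flag (that \cite[Theorem 3.11]{LZII} matches $C_X$ with $\Omega C_X$ exactly as in Proposition \ref{ori}, and that the lifting is unique) is precisely what the paper implicitly relies on, so no further argument is needed.
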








\section{Example}
In this section, we give an example to explain our main results.

\begin{exm}\label{ex1}
Let $\Lambda$ be the $k$-algebra given by the quiver
$$\xymatrix@C=0.4cm@R0.4cm{
&&3 \ar[dl]\\
&5 \ar[dl] \ar@{.}[rr] &&2 \ar[dl] \ar[ul]\\
6 \ar@{.}[rr] &&4 \ar[ul] \ar@{.}[rr] &&1 \ar[ul]}$$
with mesh relations. The Auslander-Reiten quiver of $\B:=\mod\Lambda$ is given by
$$\xymatrix@C=0.4cm@R0.4cm{
&&{\begin{smallmatrix}
3&&\\
&5&\\
&&6
\end{smallmatrix}} \ar[dr] &&&&&&{\begin{smallmatrix}
1&&\\
&2&\\
&&3
\end{smallmatrix}} \ar[dr]\\
&{\begin{smallmatrix}
5&&\\
&6&
\end{smallmatrix}} \ar[ur] \ar@{.}[rr] \ar[dr] &&{\begin{smallmatrix}
3&&\\
&5&
\end{smallmatrix}} \ar@{.}[rr] \ar[dr] &&{\begin{smallmatrix}
4
\end{smallmatrix}} \ar@{.}[rr] \ar[dr] &&{\begin{smallmatrix}
2&&\\
&3&
\end{smallmatrix}} \ar[ur] \ar@{.}[rr] \ar[dr] &&{\begin{smallmatrix}
1&&\\
&2&
\end{smallmatrix}} \ar[dr]\\
{\begin{smallmatrix}
6
\end{smallmatrix}} \ar[ur] \ar@{.}[rr] &&{\begin{smallmatrix}
5
\end{smallmatrix}} \ar[ur] \ar@{.}[rr] \ar[dr] &&{\begin{smallmatrix}
3&&4\\
&5&
\end{smallmatrix}} \ar[ur] \ar[r] \ar[dr] \ar@{.}@/^15pt/[rr] &{\begin{smallmatrix}
&2&\\
3&&4\\
&5&
\end{smallmatrix}} \ar[r] &{\begin{smallmatrix}
&2&\\
3&&4
\end{smallmatrix}} \ar[ur] \ar@{.}[rr] \ar[dr] &&{\begin{smallmatrix}
2
\end{smallmatrix}} \ar[ur] \ar@{.}[rr] &&{\begin{smallmatrix}
1
\end{smallmatrix}}.\\
&&&{\begin{smallmatrix}
4&&\\
&5&
\end{smallmatrix}} \ar[ur] \ar@{.}[rr] &&{\begin{smallmatrix}
3
\end{smallmatrix}} \ar[ur] \ar@{.}[rr] &&{\begin{smallmatrix}
2&&\\
&4&
\end{smallmatrix}} \ar[ur]
}$$
We denote by ``~$\bullet$" in the Auslander-Reiten quiver the indecomposable objects belong to a subcategory and by ``~$\circ$'' the indecomposable objects do not belong to it.
$$\xymatrix@C=0.2cm@R0.2cm{
&&&\bullet \ar[dr] &&&&&&\bullet \ar[dr]\\
{C:} &&\bullet \ar[ur]  \ar[dr] &&\circ  \ar[dr] &&\circ  \ar[dr] &&\circ  \ar[ur]  \ar[dr] &&\bullet \ar[dr]\\
&\bullet \ar[ur]  &&\circ \ar[ur]  \ar[dr] &&\circ \ar[ur] \ar[r] \ar[dr] &\bullet \ar[r] &\circ \ar[ur] \ar[dr] &&\bullet \ar[ur] &&\circ\\
&&&&\bullet \ar[ur] &&\circ \ar[ur] &&\bullet \ar[ur]
\\} \quad
\xymatrix@C=0.2cm@R0.2cm{
&&&\circ \ar[dr] &&&&&&\circ \ar[dr]\\
{\oB:} &&\circ \ar[ur]  \ar[dr] &&\bullet  \ar[dr] &&\bullet  \ar[dr] &&\bullet  \ar[ur]  \ar[dr] &&\circ \ar[dr]\\
&\circ \ar[ur]  &&\circ \ar[ur]  \ar[dr] &&\bullet \ar[ur] \ar[r] \ar[dr] &\circ \ar[r] &\bullet \ar[ur] \ar[dr] &&\circ \ar[ur] &&\circ\\
&&&&\circ \ar[ur] &&\bullet \ar[ur] &&\circ \ar[ur]
}$$
where $\C=\add C$ is fully rigid. $\oB$ is hereditary and has finite length.
We have the following maximal $\C$-rigid objects of $\B$ which satisfy the assumption in Theorem \ref{main1}:
\vspace{1mm}
$${\begin{smallmatrix}
\ &3&\
\end{smallmatrix}} \oplus
\begin{smallmatrix}
3&&4\ \\
&5&
\end{smallmatrix} \oplus {\begin{smallmatrix}
\ &4&\
\end{smallmatrix}}\oplus \Lambda, \quad {\begin{smallmatrix}
\ &3&\
\end{smallmatrix}} \oplus
\begin{smallmatrix}
&2&\ \\
3&&4
\end{smallmatrix} \oplus {\begin{smallmatrix}
\ &4&\
\end{smallmatrix}}\oplus \Lambda,\quad{{\begin{smallmatrix}
\ &3&\
\end{smallmatrix}} \oplus
\begin{smallmatrix}
&2&\ \\
3&&4
\end{smallmatrix}} \oplus {\begin{smallmatrix}
2&&\\
&3&
\end{smallmatrix}} \oplus \Lambda, \quad {{\begin{smallmatrix}
\ &4&\
\end{smallmatrix}} \oplus
\begin{smallmatrix}
&2&\ \\
3&&4
\end{smallmatrix}}\oplus \Lambda \oplus {\begin{smallmatrix}
2&&\\
&4&
\end{smallmatrix}}.
$$
\vspace{1mm}
By Theorem \ref{main2.0}, they are the liftings of the following support tilting objects in $\oB$:
\vspace{1mm}$${\begin{smallmatrix}
\ &3&\
\end{smallmatrix}} \oplus
\begin{smallmatrix}
3&&4\ \\
&5&
\end{smallmatrix} \oplus {\begin{smallmatrix}
\ &4&\
\end{smallmatrix}},\quad{\begin{smallmatrix}
\ &3&\
\end{smallmatrix}} \oplus
\begin{smallmatrix}
&2&\ \\
3&&4
\end{smallmatrix} \oplus {\begin{smallmatrix}
\ &4&\
\end{smallmatrix}},\quad {{\begin{smallmatrix}
\ &3&\
\end{smallmatrix}} \oplus
\begin{smallmatrix}
&2&\ \\
3&&4
\end{smallmatrix}} \oplus {\begin{smallmatrix}
2&&\\
&3&
\end{smallmatrix}}, \quad {{\begin{smallmatrix}
\ &4&\
\end{smallmatrix}} \oplus
\begin{smallmatrix}
&2&\ \\
3&&4
\end{smallmatrix}}.
$$\\
Moreover,
${\begin{smallmatrix}
\ &3&\
\end{smallmatrix}} \oplus
\begin{smallmatrix}
3&&4\ \\
&5&
\end{smallmatrix} \oplus {\begin{smallmatrix}
\ &4&\
\end{smallmatrix}},\quad {\begin{smallmatrix}
\ &3&\
\end{smallmatrix}} \oplus
\begin{smallmatrix}
&2&\ \\
3&&4
\end{smallmatrix} \oplus {\begin{smallmatrix}
\ &4&\
\end{smallmatrix}},\quad {{\begin{smallmatrix}
\ &3&\
\end{smallmatrix}} \oplus
\begin{smallmatrix}
&2&\ \\
3&&4
\end{smallmatrix}} \oplus {\begin{smallmatrix}
2&&\\
&3&
\end{smallmatrix}}$
are tilting objects, and
$({{\begin{smallmatrix}
\ &4&\
\end{smallmatrix}} \oplus
\begin{smallmatrix}
&2&\ \\
3&&4
\end{smallmatrix}}, {\begin{smallmatrix}
3&&\\
&5&
\end{smallmatrix}})
$
is a support $\tau_{\oB}$-tilting pair in $\oB$.\\
We also have a cluster tilting object:
$$\xymatrix@C=0.2cm@R0.2cm{
&&&\bullet \ar[dr] &&&&&&\bullet \ar[dr]\\
{M:} &&\bullet \ar[ur]  \ar[dr] &&\circ  \ar[dr] &&\bullet  \ar[dr] &&\circ  \ar[ur]  \ar[dr] &&\bullet \ar[dr]\\
&\bullet \ar[ur]  &&\circ \ar[ur]  \ar[dr] &&\circ \ar[ur] \ar[r] \ar[dr] &\bullet \ar[r] &\circ \ar[ur] \ar[dr] &&\circ \ar[ur] &&\bullet\\
&&&&\bullet \ar[ur] &&\circ \ar[ur] &&\bullet \ar[ur]
\\}
$$
by Corollary \ref{cor1}, we get a support tilting subcategory $\add ({\begin{smallmatrix}
4
\end{smallmatrix}})$ of $\oB$.
\\In fact, according to Proposition \ref{ori}, any maximal $\C$-rigid object admits a support tilting object in $\oB$.
\end{exm}

\end{document}